\newcommand{\R}{\mathbb{R}}
\newcommand{\RN}{\mathbb{R}^2}
\newcommand{\intr}{\int_{\mathbb{R}^2}}
\newcommand{\frace}{\frac{1}{\varepsilon^2}}
\newcommand{\e}{\varepsilon}
\newtheorem{theorem}{Theorem}[section]
\newtheorem{remark}[theorem]{Remark}
\newtheorem{prop}[theorem]{Proposition}
\newtheorem{lemma}[theorem]{Lemma}
\numberwithin{equation}{section}
\begin{document}

\title[Asymptotic analysis of  solutions to a  gauged $O(3)$ sigma model]{Asymptotic analysis of  solutions to a  gauged $O(3)$ sigma model}

\author{Daniele Bartolucci}
\address[Daniele Bartolucci]  {University of Rome "Tor Vergata"
Department of Mathematics,
Via della Ricerca Scientifica n.1,
00133 Rome, Italy}
\email{bartoluc@axp.mat.uniroma2.it}

\author{Youngae Lee}
\address[Youngae Lee] {Center for Advanced Study in Theoretical Sciences,
National Taiwan University,
No.1, Sec. 4, Roosevelt Road, Taipei 106, Taiwan}
\email{youngaelee0531@gmail.com}

\author{Chang-Shou Lin}
\address[Chang-Shou Lin] {Taida Institute for Mathematical Sciences,
Center for Advanced Study in Theoretical Sciences,
National Taiwan University,
No.1, Sec. 4, Roosevelt Road, Taipei 106, Taiwan}
\email{cslin@tims.ntu.edu.tw}

\author{Michiaki Onodera}
\address[Michiaki Onodera] {Institute of Mathematics for Industry,
Kyushu University,
744 Motooka, Nishi-ku, Fukuoka 819-0395, Japan}
\email{onodera@imi.kyushu-u.ac.jp}

\begin{abstract}
We analyze an elliptic equation arising in the study of the gauged
$O(3)$ sigma model with the Chern-Simons term.
In this paper, we study the asymptotic behavior of solutions and apply it
to prove the uniqueness of stable solutions. However,
one of the features of this nonlinear equation is the existence of stable nontopological solutions in $\RN$,
which implies
the possibility that a stable solution which blows up at a vortex point exists.
To exclude this kind of blow up behavior is
one of the main difficulties which we have to overcome.
\end{abstract}

\date{\today}
\keywords{gauged $O(3)$ sigma models; blow up analysis; Pohozaev type  identity; stable solutions}
\maketitle
\section{Introduction}
In a recent paper \cite{T1}, G. Tarantello has considered the following  Chern-Simons-Higgs (CSH) model:
\begin{equation}\label{chseq}
\Delta u+\frace e^u(1-e^u)=4\pi\sum^{d}_{j=1}m_{j}\delta_{p_{j}}
\ \mbox{on}\ \Omega,\ \ \
\end{equation}
where $\Omega$ is a flat 2-torus,  $p_{j}$  are distinct points of $\Omega$, and $\delta_p$ stands for the
Dirac measure concentrated at $p$.
Each $p_{j}$ for $1\le j \le d$ is said to be a \textit{vortex} point.
Among other things, the following theorem was proved in \cite{T1}.

\vspace{0.3cm}

{\bf{Theorem A.}} \textit{For given} $\{p_{j}\}$ \textit{and} $m_j\in\mathbb{N}$, \textit{there exists} $\e_0\equiv\e_0(p_{j}, m_{j})>0$ \textit{such that
 if } $\e\in(0,\e_0)$, \textit{then there exists a unique topological solution} $u_\e$ \textit{for (\ref{chseq}), i.e.}
\textit{a unique solution which satisfies} $u_\e\to0$ \textit{a.e. in} $\Omega$ \textit{as} $\e\to0$.

\vspace{0.3cm}

The CSH model has been proposed more than
twenty years ago in \cite{HKP} and independently in \cite{JW} to describe vortices in high temperature
superconductivity. Actually, (\ref{chseq}) was derived from the
Euler-Lagrange equations of the CSH model via a vortex ansatz, see \cite{HKP, JW, T2, Y}.
We also refer to \cite{CK, CKL, LY1, LY2, NT} for more recent developments.

\bigskip

Here we are concerned with another nonlinear equation arising in the study of the gauged $O(3)$ sigma model with the Chern-Simons term:
\begin{equation}\label{maineq}
\Delta u+\frace \frac{e^u(1-e^u)}{(\tau+e^u)^3}=4\pi\sum^{d_{1}}_{j=1}m_{j,1}\delta_{p_{j,1}}-4\pi\sum^{d_{2}}_{j=1}m_{j,2}\delta_{p_{j,2}}
\ \textrm{on}\ \Omega,
\end{equation}
where  $\tau\in(0,\infty)$ is a real parameter.
We assume that $m_{j,i}\ge0$ for all $1\le j \le d_i$ and $i=1, 2$.
Let $$Z\equiv\{p_{j,i}\in\Omega\ |\ 1\le j \le d_i\ \textrm{and}\ i=1,2 \}$$ be the set of vortex points in $\Omega$.
We  define $Z_i\equiv\{p_{j,i}\in\Omega\ |\ 1\le j \le d_i\  \}$ and
$N_i\equiv\sum^{d_{i}}_{j=1}m_{j,i}\ \textrm{for}\ i=1, 2$.
We also adopt the notations $f_\tau(u)\equiv\frac{e^u(1-e^u)}{(\tau+e^u)^3}$ and $f_\tau'(u)=\frac{e^u(\tau-2(\tau+1)e^u+e^{2u})}{(\tau+e^u)^4}$.
For the physical background of this model and other recent studies, we refer to
\cite{ChN, C,  CHLL, CN} and the references quoted therein.

One of the natural questions is whether Theorem A also holds for (\ref{maineq}).
In \cite{T1}, Tarantello proved that if $u_\e$ is a topological solution, then  $u_\e$ is strictly stable solution.
The uniqueness of the topological solutions was established as a consequence of this fact.
 In this paper, we study the uniqueness of \textit{stable solutions}
instead of topological solutions, because the definition of a topological solution depends on a sequence of solutions, not only the solution itself. Here $u$ is called
a stable solution of (\ref{maineq}) if the linearized equation of (\ref{maineq}) at $u$ has nonnegative eigenvalues. In this paper, we prove the equivalence of stable solutions and
 topological solutions under certain assumptions. To state our result, we need the following conditions:
\begin{description}
\item[(H1)] $N_1\neq N_2$;
\item[(H2)] either $\tau=1$ or,
if $N_i>N_k,$ then $m_{j,i}\in[0,1]$  for all $1\le j \le d_i$.
\end{description}
Then we have the following theorem.
 \begin{theorem}\label{stable}  Let $u_\e$ be  a sequence of solutions
of (\ref{maineq}) with $\e>0$. \\
(i) if $u_\e\to 0$ a.e. in $\Omega\setminus Z$ as $\e\to0$, then
$u_\e$ is a strictly stable solution for sufficiently
small  $\e>0$. \\
(ii) if (H1-2) hold and $u_\e$ is a sequence of stable solutions, then $u_\e\to 0$ a.e. in $\Omega\setminus Z$ as $\e\to0$.
\end{theorem}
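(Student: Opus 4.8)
Throughout I would work with the quadratic form attached to the linearized operator $L_\e=-\Delta-\frace f_\tau'(u_\e)$, so that $u_\e$ is stable exactly when $Q_\e(\phi):=\int_\Omega|\nabla\phi|^2\,dx-\frace\int_\Omega f_\tau'(u_\e)\phi^2\,dx\ge0$ for all admissible $\phi$, and strictly stable when the inequality is strict for $\phi\not\equiv0$. The single computation driving part (i) is $f_\tau'(0)=-(\tau+1)^{-3}<0$. Assuming $u_\e\to0$ a.e.\ on $\Omega\setminus Z$, elliptic estimates upgrade this to local uniform convergence on compact subsets of $\Omega\setminus Z$, so on the complement of small disks about $Z$ one has $f_\tau'(u_\e)\le-c<0$ and hence a strongly stabilizing potential $-\frace f_\tau'(u_\e)\ge c\,\frace$. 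On the disks about $Z$, where $u_\e\to-\infty$ (near $Z_1$) or $u_\e\to+\infty$ (near $Z_2$), I would use $f_\tau'(u)=O(e^{u})$ as $u\to-\infty$ and $O(e^{-u})$ as $u\to+\infty$, together with the control of $\frace\eu$ and $\frace e^{-u_\e}$ and a Hardy--Poincar\'e inequality on the disks, to dominate the destabilizing contribution there by the Dirichlet term. This yields $Q_\e(\phi)>0$ and proves (i).

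For part (ii) I argue by contradiction: suppose $u_\e$ is stable yet $u_\e\not\to0$ on a set of positive measure. Two integral identities organize the analysis. Integrating (\ref{maineq}) over the torus gives $\frace\int_\Omega f_\tau(u_\e)=4\pi(N_1-N_2)$, while testing stability with $\phi\equiv1$ gives $\int_\Omega f_\tau'(u_\e)\le0$. Because $f_\tau'>0$ for $u$ near $\pm\infty$ while $f_\tau'<0$ only on a bounded range containing $0$, the second inequality is the mechanism that prevents macroscopic collapse: on any fixed ball on which $\frace\eu$ (near $Z_1$) or $\frace e^{-u_\e}$ (near $Z_2$) stayed bounded below, the potential $-\frace f_\tau'(u_\e)$ would be a negative $O(1)$ constant, and testing $Q_\e$ with a fixed bump supported there would force $Q_\e<0$. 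Combined with the equation and the a priori estimates, this confines the failure of convergence to finitely many concentration points. A concentration point away from $Z$ is excluded because the bulk value $0$ would be the far field of the rescaled profile, and Liouville-type rigidity rules out a nonconstant bubble of vanishing mass; hence every concentration point lies in $Z$.

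At a vortex $p\in Z_i$ of multiplicity $m$ I rescale $x=p+\e y$ and set $U_\e(y)=u_\e(p+\e y)$; the natural $\e$-scale of the nonlinearity produces, in the limit, the planar sigma-model equation
\[
\Delta U+f_\tau(U)=\pm4\pi m\,\delta_0\quad\text{on }\RN,
\]
with the sign $+$ for $p\in Z_1$ and $-$ for $p\in Z_2$, whose relevant solutions are \emph{nontopological} (with $|U|\to\infty$ logarithmically at spatial infinity and finite total mass). Since $U$ arises as a local uniform limit of stable solutions, $U$ is itself stable. This is exactly the scenario flagged in the abstract: a stable nontopological bubble sitting at a vortex.

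The heart of the matter, and the step I expect to be the main obstacle, is to exclude this stable nontopological bubble under (H1-2). The plan is to derive a Pohozaev-type identity for $U$ on $\RN$, producing an algebraic relation among the bubble mass $\beta:=\frac{1}{2\pi}\intr f_\tau(U)$, the multiplicity $m$, and $\tau$. This is combined with two further ingredients: the global mass balance $\sum_{\text{bubbles}}\beta=2(N_1-N_2)$, which is nonzero by (H1) and therefore forbids the bubbles from cancelling and pins the net concentration to a single species; and the stability of $U$, tested against the fields generated by scaling and translation of the bubble. Under (H2)---either the symmetric value $\tau=1$ or the smallness constraint $m_{j,i}\in[0,1]$ on the dominant multiplicities---the Pohozaev relation becomes incompatible with the stability of $U$, delivering the contradiction and hence $u_\e\to0$ a.e.\ on $\Omega\setminus Z$. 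Reconciling the Pohozaev identity with the stability of the nontopological solution, uniformly over the admissible range of $(\tau,m)$, is where the real work lies.
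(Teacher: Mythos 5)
Your sketch identifies several correct ingredients (the sign $f_\tau'(0)=-(\tau+1)^{-3}<0$, testing the quadratic form with $\phi\equiv1$, Pohozaev identities at the vortices, instability of nontopological entire solutions), but both halves have genuine gaps. For part (i), the direct coercivity estimate cannot close as you describe. Near a vortex the solution lives at scale $\e$ and the destabilizing part of the potential satisfies $-\frace f_\tau'(u_\e)\approx-\frace e^{u_\e}/\tau^3$, which after rescaling $y=x/\e$ becomes an $O(1)$ potential on $\RN$; since the Dirichlet energy is conformally invariant in two dimensions, a test function concentrated at scale $\e$ makes both $\int|\nabla\phi|^2$ and $\frace\int|f_\tau'(u_\e)|\phi^2$ of order one, so no Hardy--Poincar\'e inequality with a generic constant can absorb the negative part. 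What actually decides the sign is the spectrum of the limiting linearized operator $-\Delta-f_\tau'(u)$ around the entire topological solution $u$ with source $-4\pi\nu\delta_0$. The paper therefore argues by contradiction: assuming $\e^2\mu_\e\to\mu_0\le0$, it shows the first eigenfunction concentrates at a vortex (Lemma \ref{forblowup}), upgrades the convergence of the rescaled solution to a global uniform one (Lemma \ref{linfty}), extracts a nontrivial limit $\hat\psi\in W^{1,2}(\RN)$ of the rescaled eigenfunctions, and invokes the rigidity of the limiting linearized problem (the adaptation of Proposition 4.16 in \cite{T1}) to get a contradiction. That rigidity input is indispensable and is absent from your argument.

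For part (ii), your structural picture is not the one that actually occurs. If $u_\e\not\to0$, Theorem \ref{BrezisMerletypealternatives} forces $u_\e$ to degenerate \emph{uniformly} on compact subsets of $\Omega\setminus Z_2$ (or $\Omega\setminus Z_1$): the failure of convergence is not confined to finitely many concentration points with bulk value $0$, so the far field of the rescaled profile at a vortex is $-\infty$ in the variable $w_\e=u_\e-2\ln\e$, not $0$, and the correct mass balance involves the bulk term $\int_\Omega e^w/\tau^3$ (or a harmonic limit $g$) in addition to the vortex contributions $\beta_{j,2}$, cf.\ (\ref{betajbddminusone}) and (\ref{inftyftn}); your identity $\sum\beta=2(N_1-N_2)$ ignores this. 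More seriously, the hardest steps are not addressed: one must show that no bubbling occurs at intermediate scales between $\e$ and $1$ (the claims $\sup e^{\hat u_\e(x)}|x|^2\to0$ in Steps 3--4 of Lemma \ref{mplusbetaeq0}, proved via Harnack inequalities and further rescalings), one must compute the local masses via the two Pohozaev quantities $-4\pi(m_{j,2}+\beta_{j,2})$ and $2\pi(\beta_{j,2}^2-m_{j,2}^2)$ and run the case analysis on $\tau=1$, $N_2>N_1$, $N_2<N_1$, and one must actually have the instability of radial nontopological entire solutions under (H2) --- this is Theorem 3.4 of \cite{CHLL}, a nontrivial external input, not something that follows from testing stability against scaling and translation fields (those generate elements of the kernel, not negative directions). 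You candidly flag this last reconciliation as ``where the real work lies,'' which is accurate: as written, the proposal does not contain the argument that closes the proof.
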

\begin{remark}
A nontopological  entire solution of the CSH equation (\ref{chseq}) is always unstable (see Appendix).
Hence for a sequence of stable solutions $u_\e$ of the CSH equation (\ref{chseq}), we can prove that $u_\e$ is a topological solution for  small $\e>0$.
The proof  is simpler than  (ii) of Theorem \ref{stable}.
\end{remark}
As a consequence of Theorem \ref{stable}, we also  have the following result about the uniqueness of stable solutions
of (\ref{maineq}).
\begin{theorem}\label{uniqueness}
Let $u_\e$ be a sequence of solutions
of (\ref{maineq}) with $\e>0$.  If (H1-2) hold,  then there exists $\e_0:=\e_0(Z, m_{j,i})>0$ such that
 there exists a unique stable solution of (\ref{maineq}) for each $\e\in(0,\e_0)$.
\end{theorem}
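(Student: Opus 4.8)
The plan is to obtain Theorem~\ref{uniqueness} as a direct consequence of Theorem~\ref{stable}. By Theorem~\ref{stable}(ii), under (H1--2) every stable solution $u_\e$ satisfies $u_\e\to0$ a.e.\ in $\Omega\setminus Z$, and by Theorem~\ref{stable}(i) any such solution is strictly stable; since a topological solution of \eqref{maineq} is known to exist for small $\e$ (e.g.\ by a sub/supersolution construction), existence of a stable solution is not at issue and the whole point is \emph{uniqueness}. Thus it suffices to show that, for all small $\e$, \eqref{maineq} admits at most one solution with $u_\e\to0$ a.e.\ in $\Omega\setminus Z$. I would argue by contradiction, supposing that along some sequence $\e_n\to0$ there exist two distinct stable solutions $u_{1,n}\neq u_{2,n}$.

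Because the two solutions carry identical vortex data, their difference $w_n:=u_{1,n}-u_{2,n}$ is smooth on the torus $\Omega$ and solves
\begin{equation}\label{diffeq}
\Delta w_n+\frac{1}{\e_n^2}\,b_n\,w_n=0\quad\text{on }\Omega,\qquad b_n:=\int_0^1 f_\tau'\!\bigl(s\,u_{1,n}+(1-s)u_{2,n}\bigr)\,ds .
\end{equation}
Testing the strict-stability inequality for $u_{2,n}$ against $\phi=w_n$ and then using \eqref{diffeq} gives
\begin{equation}\label{compare}
\Lambda_n\int_\Omega w_n^2\;\le\;\int_\Omega|\nabla w_n|^2-\frac{1}{\e_n^2}\int_\Omega f_\tau'(u_{2,n})\,w_n^2\;=\;\frac{1}{\e_n^2}\int_\Omega\bigl(b_n-f_\tau'(u_{2,n})\bigr)w_n^2,
\end{equation}
where $\Lambda_n>0$ is the spectral gap of the linearisation at $u_{2,n}$ provided by Theorem~\ref{stable}(i). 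Since $f_\tau''$ is bounded on $\R$, one has the pointwise bound $|b_n-f_\tau'(u_{2,n})|\le C|w_n|$, so after normalising $\|w_n\|_{L^\infty(\Omega)}=1$ the identity \eqref{compare} pinpoints the role of the deviation $b_n-f_\tau'(u_{2,n})$. As the gap is of order $\e_n^{-2}$ (driven in the bulk by $f_\tau'(0)=-1/(\tau+1)^3<0$), this comparison is effective precisely away from the vortices, where $b_n\to f_\tau'(0)<0$ and the maximum principle applied to \eqref{diffeq} forces $w_n\to0$; hence the normalisation can only be saturated near $Z$.

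The main obstacle is therefore the behaviour at the vortex scale. Fixing $p\in Z$ and rescaling $W_n(y):=w_n(p+\e_n y)$, equation \eqref{diffeq} turns into $\Delta_y W_n+b_n(p+\e_n y)W_n=0$; since $u_{i,n}(p+\e_n\,\cdot)$ converges to the entire blow-up profile $U$ at $p$, we obtain $W_n\to W_\infty$ solving the linearised entire equation
\begin{equation}\label{kernel}
\Delta W_\infty+f_\tau'(U)\,W_\infty=0\quad\text{in }\R^2 .
\end{equation}
This is exactly the danger highlighted in the abstract: were $U$ a \emph{nontopological} profile, \eqref{kernel} could carry a nontrivial stable mode $W_\infty$. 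The crucial input is that the stability of $u_{1,n},u_{2,n}$ together with Theorem~\ref{stable}(ii) under (H1--2) forces $U$ to be the \emph{topological} entire profile, which by Theorem~\ref{stable}(i) is strictly stable; testing its quadratic form against $W_\infty$ and using \eqref{kernel} then yields $0\ge\lambda_1(U)\int_{\R^2}W_\infty^2$ with $\lambda_1(U)>0$, so $W_\infty\equiv0$. The delicate, technical part is to justify the rescaled convergence $W_n\to W_\infty$ in an admissible space and to upgrade it to a statement that is uniform up to the vortices—this is where the blow-up analysis underlying Theorem~\ref{stable} is indispensable.

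Combining the bulk and vortex analyses gives $w_n\to0$ uniformly on $\Omega$, contradicting $\|w_n\|_{L^\infty}=1$. Hence for all small $\e$ there is at most one solution with $u_\e\to0$ a.e.\ in $\Omega\setminus Z$; together with the existence recalled above and the equivalence from Theorem~\ref{stable}, this furnishes a unique stable solution for each $\e\in(0,\e_0)$, proving Theorem~\ref{uniqueness}.
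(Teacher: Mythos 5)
Your overall strategy (contradiction, linearized equation for the difference, blow-up at a vortex, triviality of the limiting kernel) is the same as the paper's, but the proposal has a genuine gap at its crucial step. You claim that the entire topological profile $U$ is strictly stable ``by Theorem~\ref{stable}(i)'' and use $\lambda_1(U)>0$ to kill $W_\infty$. Theorem~\ref{stable}(i) is a statement about solutions on the torus $\Omega$, not about the entire solution on $\RN$; the strict stability of $u_\e$ on $\Omega$ does not transfer to the rescaled limit, precisely because the relevant eigenvalue is $\e^2\mu_\e$, which could tend to $0$ even when $\mu_\e>0$ --- ruling out exactly this degeneration is the content of Section 5, which itself rests on the nondegeneracy of the entire linearized operator (Proposition 4.16 in \cite{T1}, combined in the paper's Section 6 with the weighted eigenvalue problem and Theorem 3.4 of \cite{CHLL}). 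So your argument is circular: the fact you need (trivial $W^{1,2}$-kernel of $-\Delta-f_\tau'(U)$ on $\RN$) is an independent input, not a consequence of Theorem~\ref{stable}(i). Moreover, even granting ``stability'' of $U$ on $\RN$, that only gives a nonnegative quadratic form, so testing against a kernel element yields $0\ge 0$ and no contradiction; you genuinely need strict positivity, i.e.\ nondegeneracy.

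Two further points where the proposal is thinner than the paper. First, you normalize $\|w_n\|_{L^\infty}=1$ and defer the ``delicate'' step of showing the rescaled limit $W_\infty$ is nontrivial; with an $L^\infty$ normalization the mass could a priori be carried at an intermediate scale between $\e_n$ and $1$, and your bulk maximum-principle argument only pushes the maximum towards the vortices, it does not localize it at the $\e_n$-scale. The paper avoids this by normalizing in $L^2$, invoking Lemma~\ref{forblowup} to get a fixed amount of $L^2$-mass in every ball $B_r(p_{j_0,i_0})$, and then using the decay estimates of Section 5 to conclude $\hat\psi\not\equiv0$. Second, you speak of ``the'' entire blow-up profile $U$: a priori the two solutions $u_{1,n}$ and $u_{2,n}$ could rescale to two different topological entire solutions, and the coefficient $b_n$ would then not converge to $f_\tau'(U)$ for a single $U$. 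The paper closes this by the method of moving planes (radial symmetry of each limit) together with the uniqueness of radial topological entire solutions from \cite{CHLL}, giving $u_1=u_2$. Without these three ingredients --- nondegeneracy of the entire linearization, nonvanishing of the rescaled difference quotient, and coincidence of the two blow-up profiles --- the proof is not complete.
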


We remark that  the uniqueness of topological solutions of (\ref{maineq}) always holds even without the assumptions  (H1-2).
Indeed, this result and (i) of Theorem \ref{stable} can be proved by a suitable adaptation of the argument in \cite{T1}.
Roughly speaking, this is due to the fact that
the behavior of a topological solution is the same no matter
whether it is a solution of (\ref{chseq}) or of (\ref{maineq}).
See either Proposition 4.8 in \cite{T1} or Lemma \ref{speedofconvergence} below.

However, there are dramatic differences between these two equations when stable solutions are considered.
First of all, the asymptotic analysis is relatively easier for the CSH equation (\ref{chseq}).
By the maximum principle, any solution $u$ of the CSH equation (\ref{chseq}) is
 always negative, thus $e^u(1-e^u)$ is always positive. On the contrary, a solution $u(x)$ of the equation (\ref{maineq})
could tend to either $+\infty$ or $-\infty$ as $x$ converges to a vortex point in case $N_1\neq0$ and $N_2\neq0$.
This fact readily implies that the nonlinear term $f_\tau(u)$ must change sign in $\Omega$ and this is
of course the cause of a lot of difficulties in the study of the asymptotic behavior of $u_\e$ as $\e\to0$.

Secondly, any nontopological entire solution of
the CSH equation (\ref{chseq}) is always unstable.  This might not be  true for
the equation (\ref{maineq}). Indeed, it has been proved that any nontopological   radially symmetric entire solution of (\ref{maineq}) is unstable
provided that either $\tau=1$ or $m_{j,i}\in[0,1]$ for all $i,\ j$. Hence if $\tau\neq1$ and $m_{j,i}>1$ for some $i,\ j$, then there might exist
  nontopological stable entire solutions for (\ref{maineq}). Of course, this fact might complicate our analysis, because stable solutions might be bubbling even at a vortex point $p_{j,i}$, where
  $\tau\neq1$ and $m_{j,i}>1$.  Our condition (H2) partly reflects this fact.
  However, (H2) still
allows the possibility that $m_{j,k}>1$ as far as the global condition $N_i>N_k$ is satisfied, since
in this case one can prove that stable solutions cannot blow up at $p_{j,k}$.
But it is still an interesting open problem to see whether those conditions are necessary or not and
we will discuss it in another paper.

\begin{remark}If any one of the $N_i$'s is zero, then Theorem \ref{stable}-\ref{uniqueness} hold even without the assumptions (H1-2).
\end{remark}

To understand the asymptotic behavior of solutions of (\ref{maineq}) as $\e\to0$,
we also ask whether or not there might exist a sequence of solutions $u_\e$ for (\ref{maineq}) such that
\begin{equation}\label{asympimpossible}
\lim_{\e\to0}\Big(\sup_Ku_\e\Big)=\infty \ \ \textrm{and}\ \  \lim_{\e\to0}\Big(\inf_Ku_\e\Big)=-\infty,
\end{equation}
where $K=\Omega\setminus \cup_{i,j}B_r(p_{j,i})$ for any fixed  $r>0$.
The following theorem tells us that the kind of blow-up behavior as introduced in (\ref{asympimpossible})
cannot occur.
\begin{theorem}\label{BrezisMerletypealternatives}
Let $u_\e$ be a sequence of solutions of (\ref{maineq}). Then, up to  subsequences,
one of the following holds true:

(a) $u_\e\to 0$ uniformly on any compact subset of $\Omega\setminus Z$;

(b) for any compact subset $K\subset\Omega\setminus Z_2$,
there exists $\nu_K>0$ such that $$\lim_{\e\to0}\Big(\sup_K u_\e\Big)\le -\nu_K;$$

(c) for any compact subset $K\subset\Omega\setminus Z_1$,
there exists $\nu_K>0$ such that $$\lim_{\e\to0}\Big(\inf_K u_\e\Big)\ge \nu_K.$$
\end{theorem}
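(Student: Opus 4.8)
The plan is to establish a Brezis--Merle type alternative adapted to the sign-changing nonlinearity $f_\tau$. The starting point is the integral identity obtained by integrating \eqref{maineq} over the torus $\Omega$: since $\int_\Omega\Delta u_\e=0$ and the Dirac masses integrate to $4\pi(N_1-N_2)$, one gets the fixed \emph{signed mass}
\begin{equation*}
\frace\int_\Omega f_\tau(u_\e)\,dx=4\pi(N_1-N_2).
\end{equation*}
I would then record the sign structure of the nonlinearity: $f_\tau(u)<0$ for $u>0$, $f_\tau(u)>0$ for $u<0$, $f_\tau(0)=0$, and $f_\tau(u)\to0$ as $u\to\pm\infty$. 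In particular $f_\tau(u)u\le0$ with $|f_\tau(u)u|$ bounded on $\R$, and the primitive $F_\tau(u)=\int_0^u f_\tau$ is bounded as well. Away from $Z$ the equation reads $-\Delta u_\e=\frace f_\tau(u_\e)$, so $u_\e$ is subharmonic on $\{u_\e>0\}$ and superharmonic on $\{u_\e<0\}$; by the maximum principle any interior bump is forced to propagate from the vortex set, positive blow-up only from $Z_2$ and negative blow-up only from $Z_1$, matching the sign of the singular background at those points.

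The decisive a priori estimate is the uniform bound
\begin{equation*}
\frace\int_\Omega|f_\tau(u_\e)|\,dx\le C,
\end{equation*}
which controls the cancellation between positive and negative parts that the signed identity alone cannot see. I would derive it from a Pohozaev-type identity on balls centered at the vortices together with the global constraint above, exploiting the boundedness of $F_\tau$ and of $u f_\tau(u)$; this is exactly where the feature stressed in the introduction---that $f_\tau$ changes sign and that stable nontopological profiles may exist---makes the argument delicate. Granting this bound, write $u_\e=\bar u_\e+w_\e+\varphi$, where $\bar u_\e$ is the average, $\varphi$ is the fixed singular background solving $\Delta\varphi=4\pi\sum m_{j,1}\delta_{p_{j,1}}-4\pi\sum m_{j,2}\delta_{p_{j,2}}-\tfrac{4\pi(N_1-N_2)}{|\Omega|}$ (smooth off $Z$, tending to $-\infty$ at $Z_1$ and $+\infty$ at $Z_2$), and $w_\e=\int_\Omega G(\cdot,y)\frace f_\tau(u_\e(y))\,dy$ with $G$ the Green's function. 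The $L^1$ bound makes $\frace f_\tau(u_\e)\,dx$ a sequence of uniformly bounded measures; passing to a subsequence it converges weakly-$*$ to $\mu^+-\mu^-$, and the Brezis--Merle lemma then yields that on any compact subset of $\Omega\setminus\mathrm{supp}(\mu^++\mu^-)$ the oscillation of $w_\e$ stays bounded.

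It remains to locate the concentration set and to identify the three regimes. I would first show that no concentration occurs in $\Omega\setminus Z$: by the subharmonic/superharmonic dichotomy a compactly contained positive (resp.\ negative) bump is excluded by the maximum principle, so atoms of $\mu^\pm$ can sit only at points of $Z_2$ (resp.\ $Z_1$); combined with the $O(\e)$ width of the transition layer forced by $f_\tau'(0)\ne0$ and the $\frace$ coefficient, this gives uniform control of $u_\e-\bar u_\e$ on compact subsets of $\Omega\setminus Z$. The trichotomy is then governed by the single number $\bar u_\e$, up to a subsequence: if $\bar u_\e$ stays bounded then $u_\e$ is bounded on compacts of $\Omega\setminus Z$, the equation forces the limit $u_0$ to satisfy $f_\tau(u_0)\equiv0$, and the sign analysis together with Lemma \ref{speedofconvergence} pins $u_0\equiv0$, giving (a); if $\bar u_\e\to+\infty$ one obtains (c) and if $\bar u_\e\to-\infty$ one obtains (b), the exclusion of $Z_2$ in (b) and of $Z_1$ in (c) being exactly the sign of $\varphi$ at those vortices. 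Finally \eqref{asympimpossible} is ruled out because it would force $\mu^+$ and $\mu^-$ to charge the same compact subset of $\Omega\setminus Z$, i.e.\ coexisting positive and negative concentration there, which the maximum-principle structure forbids.

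I expect the main obstacle to be the uniform $L^1$ bound on $\frace f_\tau(u_\e)$: because $f_\tau$ changes sign and decays at both ends, the only conserved quantity is the signed mass, and bounding the total mass amounts to quantitatively excluding unbounded mutual cancellation of the positive and negative contributions. Making the Pohozaev identity deliver this estimate uniformly as $\e\to0$ and in the presence of the singular vortex data, together with the exclusion of two-signed concentration on compacts of $\Omega\setminus Z$, is the technical heart of the proof.
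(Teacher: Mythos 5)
Your overall architecture (signed mass identity, a uniform $L^1$ bound on $\frace f_\tau(u_\e)$, Green's representation, then a trichotomy) points in the right direction, but the proposal has genuine gaps at the three places where the actual work happens.

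First, the uniform bound $\frace\int_\Omega|f_\tau(u_\e)|\,dx\le M_0$ is deferred to ``a Pohozaev-type identity on balls centered at the vortices together with the global constraint,'' and you yourself flag it as unresolved. That is the foundation of the whole argument, and the mechanism you sketch is not the one that works: the paper's Lemma~\ref{bddofintegration} instead multiplies the equation by the bounded test function $\frac{1-e^{u_\e}}{a+e^{u_\e}}$, whose sign is adapted to $f_\tau$, to get
$\int_\Omega\frac{(a+1)|\nabla u_\e|^2e^{u_\e}}{(a+e^{u_\e})^2}+\frace\frac{e^{u_\e}(1-e^{u_\e})^2}{(\tau+e^{u_\e})^3(a+e^{u_\e})}\,dx=4\pi(\frac{N_1}{a}+N_2)$,
which controls $|f_\tau(u_\e)|$ off a neighborhood of the level set $\{u_\e=0\}$; the remaining transition region $\{-2\le u_\e\le 2\}$ is handled by the coarea formula and the divergence theorem on a well-chosen level set. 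Without some such device the ``unbounded mutual cancellation'' you worry about is not excluded.

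Second, your exclusion of interior concentration rests on the sub/superharmonicity of $u_\e$ on $\{u_\e>0\}$ and $\{u_\e<0\}$. That only controls the sign of interior extrema; it does not rule out an $O(\e)$-scale bubble at a point of $\Omega\setminus Z$ on which $u_\e$ stays, say, between $-2\delta$ and $0$, carrying unit $L^1$ mass of $\frace f_\tau(u_\e)$. The paper's Lemma~\ref{uniformestimates} (the statement that $\inf_K|u_\e|\to0$ forces $\|u_\e\|_{L^\infty(K)}\to0$, which is what actually drives the trichotomy) is proved by rescaling at a point where $u_\e$ equals a carefully chosen level $s_1$, passing to an entire radial solution via Lemma~\ref{radiallysymm}, and then using the quantization of Lemma~\ref{propertyofentiresolution} ($\beta(s)\to\pm\infty$ as $s\to0^{\mp}$, $|\beta|>4$ for nontopological profiles) so that a single bubble already exceeds the total mass budget $M_0$. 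This classification input is absent from your proposal, and the claimed ``uniform control of $u_\e-\bar u_\e$ on compacts'' does not follow from the Brezis--Merle lemma plus the maximum principle alone. Third, the passage from compacts of $\Omega\setminus Z$ to compacts of $\Omega\setminus Z_2$ in case (b) requires showing that $u_\e$ cannot climb back to level $0$ inside a small punctured ball around a point of $Z_1$ while being uniformly negative on its boundary; the paper needs a two-case analysis (whether $|x_\e|/\e$ stays bounded) with Harnack inequalities and further blow-ups, and your appeal to ``the sign of $\varphi$ at those vortices'' does not address this. Finally, invoking Lemma~\ref{speedofconvergence} to pin $u_0\equiv0$ is circular: that lemma concerns topological solutions and its proof uses Theorem~\ref{BrezisMerletypealternatives}.
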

Besides the application to our analysis, we believe that the above  alternative
could be useful in further studies of (\ref{maineq}).

We also remark that it is important to use a suitable Pohozaev type  identity for
handling solutions with different asymptotic behavior.
The following  antiderivatives of $f_\tau(u)$ are used to this purpose depending on the situations at hand:
\begin{equation*}
F_{1,\tau}(u)\equiv\frac{-(1-e^u)^2}{2(\tau+1)(\tau+e^u)^2},
\end{equation*}
and
\begin{equation*}
F_{2,\tau}(u)\equiv\frac{e^u((1-\tau)e^u+2\tau)}{2\tau^2(\tau+e^u)^2}.
\end{equation*}

Moreover, we denote by $G$ the Green's function on $\Omega$ which satisfies
\begin{equation}
-\Delta_x G(x,y)=\delta_y-\frac{1}{|\Omega|},\ x, y\in \Omega\ \textrm{and}\ \int_\Omega G(x,y)dx=0,
\end{equation}
and by $\gamma(x,y)=G(x,y)+\frac{1}{2\pi}\ln|x-y|$ its regular part.
We also  define,
$$
u_0^+(x)\equiv-4\pi\sum^{d_{1}}_{j=1}m_{j,1}G(x,p_{j,1}),\ \ \
u_0^-(x)\equiv-4\pi\sum^{d_{2}}_{j=1}m_{j,2}G(x,p_{j,2}),\ \ \ u_0\equiv u_0^+-u_0^-,
$$
and therefore we see that it holds
\begin{equation}
\Delta u_0=-\frac{4\pi (N_1-N_2)}{|\Omega|}+4\pi\sum^{d_{1}}_{j=1}m_{j,1}\delta_{p_{j,1}}-4\pi\sum^{d_{2}}_{j=1}m_{j,2}\delta_{p_{j,2}}
\ \textrm{on}\ \Omega.
\end{equation}

The rest of this paper is devoted to the proof of the above theorems. In Section 2, we discuss some
preliminary results. In Section 3, we investigate the asymptotic behavior of solutions of (\ref{maineq}) as $\e\to0$.
In Section 4-6, we study the asymptotic behavior of stable solutions.
The main purpose is to prove some identities involving
data coming from different regions, one being a neighborhood of the vortex point and
the other one its complement. The more subtle part is the asymptotic analysis of the bubbling behavior of
stable solutions at vortex points. Finally, we prove Theorems \ref{stable}-\ref{uniqueness}.

\section{Preliminaries}\label{limitingsection}
We consider the following limiting problem for (\ref{maineq}) when $Z$ is empty,
\begin{equation}\label{limitingproinr2}
\Delta u+\frac{e^u(1-e^u)}{(\tau+e^u)^3}=0
\ \textrm{in}\ \RN,
\end{equation}
and we also define (recall $f_\tau(u)=\frac{e^u(1-e^u)}{(\tau+e^u)^3}$)
\begin{equation}\label{defofbeta}
\beta\equiv\frac{1}{2\pi}\intr f_\tau(u)dx.
\end{equation}
By applying the method of moving planes as introduced in \cite{GNN} and improved in  \cite{ChL} and \cite{SY},
we obtain the following lemma.
\begin{lemma}\label{radiallysymm}
Let $u$ be a solution of (\ref{limitingproinr2}). Assume that there exists a constant $c\in\R$ such that
$$\textrm{either} \ \ u\le c\ \  \textrm{or}\ \ u\ge c\ \  \textrm{or}\ \ \limsup_{|x|\to\infty}\frac{|u|}{|x|^2}\le c.$$
If
$f_\tau(u)\in L^1(\RN)$, then $u$ is radially symmetric about some point $x_0\in\RN$.
\end{lemma}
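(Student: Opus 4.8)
The plan is to combine a potential-theoretic description of the behaviour of $u$ at infinity with the method of moving planes, adapting the scheme of \cite{GNN,ChL,SY} to the non-monotone nonlinearity $f_\tau$. First I would write the decomposition $u=u_p+h$, where
$$u_p(x)=-\frac{1}{2\pi}\intr \ln|x-y|\,f_\tau(u(y))\,dy$$
is the Newtonian potential of $-f_\tau(u)$ and $h$ is harmonic on $\RN$. Since $f_\tau(u)\in L^1(\RN)$, splitting $\ln|x-y|=\ln|x|+\ln\frac{|x-y|}{|x|}$ and using the definition \eqref{defofbeta} of $\beta$ gives $u_p(x)=-\beta\ln|x|+O(1)$ as $|x|\to\infty$, together with the gradient bound $|\nabla u_p(x)|=O(|x|^{-1})$. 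The purpose of the three growth hypotheses is precisely to force the harmonic remainder $h$ to be constant: if $u\le c$ then $h\le\beta\ln|x|+O(1)$, while if $u\ge c$ then $-h\le-\beta\ln|x|+O(1)$, and in either case a one-sided bound by a function of sublinear growth forces $h$ to be constant by Liouville's theorem. Under the subquadratic hypothesis $\limsup_{|x|\to\infty}|u|/|x|^2\le c$ one obtains, a priori, only that $h$ is a harmonic polynomial of degree at most two, and the delicate point is to rule out a nontrivial polynomial part. Once $h$ is shown to be constant I obtain the precise profile $u(x)=-\beta\ln|x|+O(1)$ with matching gradient decay.

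With this asymptotic information the second step is the moving plane procedure. For a fixed direction $e$ and parameter $\lambda$, let $x^\lambda$ denote the reflection of $x$ across $\{x\cdot e=\lambda\}$ and study $w_\lambda(x)=u(x^\lambda)-u(x)$, which solves the linear equation $\Delta w_\lambda+c_\lambda(x)\,w_\lambda=0$ with $c_\lambda(x)=\int_0^1 f_\tau'\big(t\,u(x^\lambda)+(1-t)u(x)\big)\,dt$. Because the leading radial term $-\beta\ln|x|$ is invariant under every reflection, the asymptotics from the first step show that $w_\lambda$ has a sign near infinity once $|\lambda|$ is large, the sign of $\beta$ (equivalently, which one-sided bound holds) fixing the orientation of the sweep. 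A maximum-principle/narrow-region argument, valid even though $f_\tau$ is not monotone since $f_\tau'$ is bounded and the relevant regions are thin, then lets me start the planes and slide them inward. Performing this in every direction and passing to the limiting critical planes produces a common point $x_0$ about which $u$ is symmetric in all directions, i.e.\ $u$ is radially symmetric about $x_0$.

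The main obstacle I anticipate is twofold. The non-monotonicity of $f_\tau$, which changes sign at $u=0$, removes the comparison that would follow from $f_\tau$ being nonincreasing, so the maximum principle must be applied on thin regions where the zeroth-order coefficient $c_\lambda$ is controlled, exactly as in the refinements of \cite{ChL,SY}. The second and, I expect, harder difficulty is the asymptotic step under the subquadratic hypothesis: unlike the one-sided bounds, this assumption does not by itself eliminate a harmonic polynomial contribution of degree up to two, and ruling it out requires extracting extra decay of $u$ from the equation, for instance by feeding the growth bound back into the representation formula and exploiting that $f_\tau(u)$ decays exponentially as $u\to\pm\infty$, before the moving plane scheme can be launched.
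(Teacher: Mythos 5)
Your overall architecture coincides with the paper's: decompose $u$ into a logarithmic potential of $f_\tau(u)$ plus a harmonic remainder $h$, use Liouville's theorem to kill $h$ under the one-sided bounds, and then run the moving planes off the resulting logarithmic profile. But the one genuinely non-routine step --- ruling out a nonconstant harmonic part under the subquadratic hypothesis --- is exactly the step you leave open. You correctly flag it as the hard point, but the route you sketch (``extract extra decay of $u$ by feeding the growth bound back into the representation formula'') is not carried out and is not obviously workable. The paper's resolution is short and different: interior derivative estimates for harmonic functions show that $D^\alpha h$ is constant for $|\alpha|=2$, so after a rotation $h$ is either $a(x_1^2-x_2^2)+b$ or affine; then, since $|f_\tau(t)|$ is bounded below by a positive constant $C_\delta$ on any band $\delta\le t\le 2\delta$, the hypothesis $f_\tau(u)\in L^1(\RN)$ forces $|\{\delta\le u\le 2\delta\}|<\infty$, whereas a nontrivial saddle or affine leading term (which dominates the $-\beta\ln|x|$ contribution of the potential) makes that level band have infinite Lebesgue measure. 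Without this, or an equivalent argument, your proof does not cover the third hypothesis --- which is the one the paper actually invokes later, via the quadratic growth bound of Lemma \ref{green}.

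Two further points. First, you assert $u_p(x)=-\beta\ln|x|+O(1)$ and $|\nabla u_p|=O(|x|^{-1})$ directly from $f_\tau(u)\in L^1(\RN)$; with only $L^1$ data one gets $u_p(x)/\ln|x|\to-\beta$ and a bounded gradient, and the sharper expansion $u=-\beta\ln|x|+O(1)$ is obtained in the paper only after showing (once $h$ is constant) that $u$ has a fixed sign by the strong maximum principle, deducing a one-sided logarithmic bound, and using integrability to get $|\beta|>2$. These sign and decay facts are also what let the planes start: the $O(1)$ error in your expansion does not by itself give $u(x^\lambda)-u(x)$ a sign near infinity. Second, the case $\beta=0$ must be treated separately (the paper shows $u\equiv0$ there by comparison with $\delta\ln(|x-x_0|/r)$), since your sweep is oriented by the sign of $\beta$ and there is no logarithmic anchor at infinity when $\beta=0$.
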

\begin{proof} The proof of Lemma \ref{radiallysymm} is standard and we just provide a sketch for reader's convenience.
First of all, we observe that $f_\tau(u)\in L^1(\RN)\cap L^\infty(\RN)$.
Next we define
\begin{equation}\label{defofv}
v(x)\equiv\frac{1}{2\pi}\intr (\ln|x-y|-\ln(|y|+1))f_\tau(u(y))dy,
\end{equation}
so that $\Delta v=f_\tau(u)$ and by known elliptic estimates
\begin{equation}\label{limofv}
\lim_{|x|\to\infty}\frac{v(x)}{\ln|x|}=\beta.
\end{equation}
At this point we may define $h=u+v$ and then observe that $\Delta h=0$.

\medskip \noindent
\textit{Step 1.} Now we claim that $h$ is constant in $\RN$.
If $u\le c$ or $u\ge c$ in $\RN$ for some constant $c\in\R$ , then (\ref{limofv}) implies that either
$h\le c_1(\ln(|x|+1)+1)$ or $h\ge c_1(\ln(|x|+1)+1)$ for some constant $c_1\in\R$.
Then, by Liouville's theorem, $h(x)=u(x)+v(x)\equiv\textrm{constant}$.
Now we consider the case   $\limsup_{|x|\to\infty}\frac{|u|}{|x|^2}\le c$.
Then, we also see that $\limsup_{|x|\to\infty}\frac{|h|}{|x|^2}$ is bounded.
By the mean value theorem, there exist constants $c_1, c_2\in \R$ such that
$$\sup_{B_{\frac{R}{2}}(y)}|D^\alpha h|\le \frac{c_1}{R^2}\sup_{B_R(y)}|h|\le c_2,$$
for any $y\in\RN$, $R=\frac{|y|}{2}$ and $|\alpha|=2$ (see Theorem 2.10 in \cite{GT}).
Then $D^\alpha h$ is a constant for $|\alpha|=2$ since $D^\alpha h$ is bounded and harmonic in $\RN$.
After a coordinates transformation, we can assume that either
$h(x)=a(x^2_1-x^2_2)+b$ or $h(x)=cx_1+dx_2+e$ for some constants $a, b, c, d, e\in\R$ where $x=(x_1,x_2)$.
Hence (\ref{limofv}) implies that either
\begin{equation}\begin{aligned}\label{uasym1}
u(x)&=a(x^2_1-x^2_2)-(\beta+o(1)) \ln|x|+b
\\&=(a+o(1))(x^2_1-x^2_2)+b
\ \textrm{as}\ |x|\to\infty,
\end{aligned}
\end{equation}
or
\begin{equation}\begin{aligned}\label{uasym2}
u(x)&=cx_1+dx_2-(\beta+o(1)) \ln |x|+e
\\&= (c+o(1))x_1+(d+o(1))x_2+e
\ \textrm{as}\ |x|\to\infty.
\end{aligned}
\end{equation}
For a fixed $\delta\in(0,1)$ we can find a constant $C_{\delta}>0$ such that
\begin{equation}
\begin{aligned}
\infty&>\intr |f_\tau(u)|dx\ge\int_{\delta\le u\le2\delta} |f_\tau(u)|dx
\\&\ge\int_{\delta\le u\le2\delta}C_{\delta} dx
=C_{\delta}|\{x\in\RN\ |\ \delta\le u(x)\le2\delta\}|.
\end{aligned}
\end{equation}
Therefore, by using (\ref{uasym1}) and (\ref{uasym2}),
we see that $|\{x\in\RN\ |\ \delta\le u(x)\le2\delta\}|=\infty$ unless $h$ is constant which proves the claim.
Then, as a consequence of (\ref{limofv}), we see that
\begin{equation}\label{asymuatinf}
\lim_{|x|\to\infty}\frac{u(x)}{\ln|x|}=-\beta.
\end{equation}

\medskip \noindent
\textit{Step 2.}  We claim that if $\beta=0$ then $u\equiv 0$.
Suppose that  there exists $x_0\in\RN$ such that $u(x_0)<0$.
Then there exists $r>0$ such that
\begin{equation}\label{lessat}
u|_{B_{r}(x_0)}< 0.
\end{equation}
Let us set $v_\delta (x)=\delta \ln(\frac{|x-x_0|}{r})$ on $\RN\setminus B_{r}(x_0)$.
Then we see that $v_\delta\ge u$ on $\partial B_r(x_0)$.
Since $u=o(\ln |x|)$ as $|x|\to\infty$ (which is of course a consequence of (\ref{asymuatinf}) and $\beta=0$),
then there exists $R_\delta>0$
such that $v_\delta>u$ on $\RN\setminus B_{R_\delta}(0)$.
We claim that $v_\delta\ge u $ on $B_{R_\delta}(0)\setminus B_r(x_0)$.
If not, there exists $x_1\in B_{R_\delta}(0)\setminus B_r(x_0)$ such that $u(x_1)-v_\delta(x_1)=\max_{B_{R_\delta}(0)\setminus B_r(x_0)}(u-v_\delta)>0$.
Then by the maximum principle, we see that
$$0\ge\Delta(u-v_\delta)(x_1)=-f_\tau(u(x_1))>0\ \textrm{since}\ u(x_1)>v_\delta(x_1)\ge 0.$$
Thus, $v_\delta=\delta\ln(\frac{|x-x_0|}{r})\ge u$ on $\RN\setminus B_r(x_0)$.
Since $\delta >0$ is arbitrary, we conclude that
\begin{equation}\label{outside}
u(x)\le 0\ \textrm{on}\  \RN\setminus B_r(x_0).
\end{equation}
Now we see that (\ref{lessat}) and  (\ref{outside}) together contradict (\ref{defofbeta}) with $\beta=0$.
Therefore we have $u\ge 0$ on $\RN$, and then, by using (\ref{defofbeta}) together with $\beta=0$,
we conclude that $u\equiv 0$  on $\RN$.

\medskip \noindent
\textit{Step 3.} From now on, we consider the case $\beta\neq0$.
By using the strong maximum principle and (\ref{asymuatinf}), we conclude that
 \begin{equation}\begin{aligned}\label{forkelvintrasformation}
\left \{
\begin{array}{ll}
u>0, \ f_\tau(u)<0\ \ \textrm{if}\ \beta<0,
\\u<0, \ f_\tau(u)>0\ \ \textrm{if}\ \beta>0.
\end{array}\right.
\end{aligned}\end{equation}
In view of (\ref{forkelvintrasformation}), we can use the maximum principle to show that
 \begin{equation}\begin{aligned}\label{fromit}
\left \{
\begin{array}{ll}
u\le-\beta\ln|x|+C\ \ \textrm{if}\ \beta<0,
\\u\ge-\beta\ln|x|+C\ \ \textrm{if}\ \beta>0,
\end{array}\right.
\end{aligned}\end{equation}
for large $|x|$ and a suitable constant $C\in\R$.
By using (\ref{fromit}), then $f_\tau(u)\in L^1(\RN)$ implies that $|\beta|>2$ and then we deduce
the sharper estimate
 \begin{equation}
\label{startclaim}
u(x)=-\beta\ln|x|+O(1)\ \ \textrm{as}\ \ |x|\to+\infty.
\end{equation}
At this point, the method of moving planes to be used together with (\ref{startclaim})
shows that $u$ is radially symmetric. Since the proof is standard we skip it here and refer to
\cite{ChL, GNN} for further details.
Therefore, the proof of Lemma \ref{radiallysymm} is completed.
\end{proof}
Let $u(r; s)$ be the solution of the following initial value problem
 \begin{equation}\begin{aligned} \label{limitingpro}
\left \{
\begin{array}{ll}
u''+\frac{1}{r}u'+\frac{e^u(1-e^u)}{(\tau+e^u)^3}=0\ \ \ \textrm{for}\ r>0,
\\ u(0; s)=s, \  u'(0; s)=0,
\end{array}\right.
\end{aligned}\end{equation}
where $u'$ denotes $\frac{du}{dr}(r; s)$ and let us set
\begin{equation}\label{defbetas}
\beta(s)\equiv\frac{1}{2\pi}\intr f_\tau(u(r; s))=\int^\infty_0 f_\tau(u(r; s))rdr.
\end{equation}
It turns out that the solutions of (\ref{limitingpro}) admit only three kinds of limiting conditions as $r\to \infty$:
 \begin{equation}\begin{aligned} \label{boundarycondition}
\left \{
\begin{array}{ll}
\textrm{topological boundary condition}: u\to 0,
\\
\\ \textrm{nontopological boundary condition of type I}:  u\to -\infty,
\\
\\ \textrm{nontopological boundary condition of type II}: u\to\infty.
\end{array}\right.
\end{aligned}\end{equation}

\

We will use the following lemma recently obtained in \cite{CHLL}.
\begin{lemma}\label{propertyofentiresolution}
Let $u(r; s)$ be a solution of (\ref{limitingpro}). Then, we have

(i) $\beta(0)=0$. In this case, $u(r; 0)\equiv 0$ is the unique topological solution of (\ref{limitingpro});

(ii) $\beta: (-\infty, 0)\rightarrow(4,\infty)$ is strictly increasing and bijective and
$$\lim_{s\to 0_-}\beta(s)=\infty\ \textrm{and}\  \lim_{s\to-\infty}\beta(s)=4.$$
In this case, $u(r; s)$ is a nontopological solution of type I;

(iii) $\beta: (0,\infty)\rightarrow(-\infty,-4)$ is strictly increasing and bijective and
$$\lim_{s\to 0_+}\beta(s)=-\infty\ \textrm{and}\  \lim_{s\to\infty}\beta(s)=-4.$$
In this case, $u(r; s)$ is a nontopological solution of type II.
\end{lemma}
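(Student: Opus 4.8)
The plan is to treat (\ref{limitingpro}) as a shooting problem in the initial height $s=u(0;s)$ and to reduce all three assertions to three facts about the map $s\mapsto\beta(s)$: continuity, strict monotonicity on each of $(-\infty,0)$ and $(0,\infty)$, and the four boundary values. Once these are in hand, the claimed bijections onto $(4,\infty)$ and $(-\infty,-4)$ follow immediately from the intermediate value theorem. I would first record that, since $f_\tau$ is smooth and bounded, the solution $u(r;s)$ exists for all $r>0$ and depends smoothly on $s$, and that integrating $(ru')'=-rf_\tau(u)$ yields
\begin{equation*}
ru'(r;s)=-\int_0^r t f_\tau(u(t;s))\,dt,\qquad \beta(s)=-\lim_{r\to\infty}ru'(r;s).
\end{equation*}

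The trichotomy (\ref{boundarycondition}) and the identification of each regime come from a sign analysis at the origin combined with the monotonicity of $ru'$. Evaluating the equation at $r=0$ (using $u'(0)=0$ and $u'/r\to u''(0)$) gives $u''(0)=-\tfrac12 f_\tau(s)$. For $s<0$ one has $f_\tau(s)>0$, hence $u''(0)<0$; and as long as $u<0$ one has $f_\tau(u)>0$, so $(ru')'<0$, which forces $u'<0$ and $u$ to be strictly decreasing and to stay negative, i.e.\ a type~I solution. The case $s>0$ is symmetric and produces a type~II solution. This already shows that $u(r;0)\equiv0$ is the only topological solution, and $f_\tau(0)=0$ gives $\beta(0)=0$; its uniqueness is exactly Step~2 of the proof of Lemma~\ref{radiallysymm} (any solution with $\beta=0$ is identically zero), which proves (i).

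For the monotonicity I would differentiate in $s$ and study $\phi:=\partial_s u$, which solves the linearized problem $\phi''+\tfrac1r\phi'+f_\tau'(u)\phi=0$ with $\phi(0)=1$, $\phi'(0)=0$. Since $rf_\tau'(u)\phi=-(r\phi')'$, differentiating $\beta$ under the integral sign gives
\begin{equation*}
\beta'(s)=\int_0^\infty r f_\tau'(u)\phi\,dr=-\lim_{r\to\infty}r\phi'(r;s).
\end{equation*}
Because $f_\tau'(u)\to0$ as $u\to\pm\infty$, the linearized operator is asymptotically the Laplacian, so $\phi(r)\sim a+b\ln r$ and $\beta'(s)=-b$. \textbf{The main obstacle} is to determine the sign of this limit, equivalently the non-degeneracy and sign behaviour of $\phi=\partial_s u$: one must control the (finitely many) sign changes of $\phi$ induced by the indefinite potential $f_\tau'(u)$ and show $\beta'(s)>0$ throughout each interval. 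I expect this to require a delicate ODE comparison (Sturm-type) argument, possibly combined with a Pohozaev identity built from an antiderivative of $f_\tau$ such as $F_{1,\tau}$ or $F_{2,\tau}$.

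Finally, the boundary values follow from two asymptotic analyses. As $s\to-\infty$, writing $u=s+v$ and rescaling $r=\tau^{3/2}e^{-s/2}\rho$ turns (\ref{limitingpro}) in the limit into the radial Liouville equation $v_{\rho\rho}+\tfrac1\rho v_\rho+e^{v}=0$, whose solution carries mass $\int_0^\infty\rho e^{v}\,d\rho=4$; tracking the rescaling in the definition of $\beta$ gives $\lim_{s\to-\infty}\beta(s)=4$, and symmetrically $\lim_{s\to\infty}\beta(s)=-4$ via the substitution $w=-u$. As $s\to0_-$, linearizing about $u\equiv0$ (note $f_\tau'(0)=-(\tau+1)^{-3}<0$) shows that the solution stays of size $O(|s|)$ out to a core radius $R_s\sim\ln(1/|s|)\to\infty$, and estimating $\int_0^{R_s}rf_\tau(u)\,dr$ over this expanding region yields $\beta(s)\to+\infty$, with $\beta(s)\to-\infty$ for $s\to0_+$. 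Combining continuity, strict monotonicity and these limits through the intermediate value theorem gives the stated bijections, completing (ii) and (iii).
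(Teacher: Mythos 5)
You should first note that the paper itself does not prove this lemma: it is imported verbatim from the preprint \cite{CHLL} (``We will use the following lemma recently obtained in \cite{CHLL}''), so there is no internal proof to compare your argument against. Judged on its own terms, your proposal gets the easy parts right. The reduction of the trichotomy (\ref{boundarycondition}) to the sign of $f_\tau$ via $(ru')'=-rf_\tau(u)$ is correct (for $s<0$ one gets $u'<0$ and $u<s<0$ for all $r$, hence a type~I solution, and symmetrically for $s>0$); part (i) is immediate from uniqueness for the IVP; and the two boundary-value computations are plausible in outline --- the rescaling to the radial Liouville equation with mass $4$ as $s\to-\infty$, and the linearization $\phi''+\tfrac1r\phi'-(\tau+1)^{-3}\phi=0$ (a modified Bessel equation with exponentially growing solution $I_0$) driving $\beta(s)\to+\infty$ as $s\to0_-$, are both the standard mechanisms, though each still needs a uniform tail estimate to pass the limit inside $\int_0^\infty rf_\tau(u)\,dr$.

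The genuine gap is the one you flag yourself: the strict monotonicity of $\beta$ on $(-\infty,0)$ and on $(0,\infty)$. This is not a technical loose end but the mathematical core of parts (ii) and (iii): continuity plus the two limits only gives, via the intermediate value theorem, that every value in $(4,\infty)$ is \emph{attained}; without injectivity the map need not be a bijection, and without monotonicity you cannot even exclude that $\beta$ dips to or below $4$ somewhere in the interior. Your formula $\beta'(s)=-\lim_{r\to\infty}r\,\partial_s u'(r;s)$ is the right object to study, but determining its sign requires controlling the nodal structure of $\phi=\partial_s u$ against the sign-changing potential $f_\tau'(u)$ (note $f_\tau'(0)<0$ while $f_\tau'(u)>0$ for $u$ very negative), and ``I expect this to require a delicate Sturm-type argument, possibly combined with a Pohozaev identity'' is a research plan, not a proof. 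This uniqueness/monotonicity analysis is precisely the content of the cited work \cite{CHLL}, and as written your proposal does not supply it; the proof is therefore incomplete at its most essential step.
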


\section{Proof of Theorem \ref{BrezisMerletypealternatives}: the asymptotic behavior of solutions}
One of the main steps in the proof of
Theorem \ref{BrezisMerletypealternatives} is to obtain a uniform bound for
\begin{equation*}
\int_{\Omega}\Big|\frace\frac{e^{u_\e}(1-e^{u_\e})}{(\tau+e^{u_\e})^3}\Big|dx.
\end{equation*}
Toward this goal we have the following lemma.
\begin{lemma}\label{bddofintegration} Let $u_\e$ be a sequence of solutions of (\ref{maineq}).
Then, there exists a constant $M_0\in(0,\infty)$ such that
$$\int_{\Omega}\Big|\frace\frac{e^{u_\e}(1-e^{u_\e})}{(\tau+e^{u_\e})^3}\Big|dx\le M_0.$$
\end{lemma}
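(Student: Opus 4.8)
The plan is to exploit the fact that, although $f_\tau$ changes sign, the \emph{signed} integral of the nonlinearity is completely determined by the topological data, while each one-sided contribution can be controlled separately by testing the equation against a regularization of the characteristic function of a sub-level set. The starting point is the elementary but crucial observation that $f_\tau(u)\ge 0$ if and only if $u\le 0$, since the denominator $(\tau+e^u)^3$ is always positive and the sign of $f_\tau(u)$ is that of $1-e^u$.

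First I would integrate \eqref{maineq} over the torus $\Omega$. Since $\Omega$ has no boundary, $\int_\Omega\Delta u_\e\,dx=0$, and the Dirac masses give
\begin{equation*}
\frace\int_\Omega f_\tau(u_\e)\,dx=4\pi(N_1-N_2),
\end{equation*}
so the signed integral is a fixed constant. Writing $A_\e\equiv\frace\int_{\{u_\e<0\}}f_\tau(u_\e)\,dx\ge 0$ for the positive part and $B_\e\equiv -\frace\int_{\{u_\e>0\}}f_\tau(u_\e)\,dx\ge 0$ for the negative part, this identity reads $A_\e-B_\e=4\pi(N_1-N_2)$, so that $\frace\int_\Omega|f_\tau(u_\e)|\,dx=A_\e+B_\e=2A_\e-4\pi(N_1-N_2)$. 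Hence it suffices to bound $A_\e$ alone.

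To bound $A_\e$, I would test the equation against $\Phi_\delta\equiv\psi_\delta(-u_\e)$, where $\psi_\delta\in C^\infty(\R)$ is nondecreasing with $\psi_\delta\equiv 0$ on $(-\infty,0]$, $\psi_\delta\equiv 1$ on $[\delta,\infty)$ and $0\le\psi_\delta\le 1$; thus $\Phi_\delta$ is a bounded Lipschitz approximation of the characteristic function of $\{u_\e<0\}$. The key structural feature is that $\Phi_\delta$ is locally constant near every vortex, equal to $1$ near each $p_{j,1}$ (where $u_\e\to-\infty$) and to $0$ near each $p_{j,2}$ (where $u_\e\to+\infty$); in particular $\nabla\Phi_\delta=-\psi_\delta'(-u_\e)\nabla u_\e$ is supported in $\{-\delta<u_\e<0\}$, which stays away from the vortices, so all integrals below are finite. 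The weak formulation then gives
\begin{equation*}
\int_\Omega\psi_\delta'(-u_\e)|\nabla u_\e|^2\,dx+\frace\int_\Omega f_\tau(u_\e)\psi_\delta(-u_\e)\,dx=4\pi N_1,
\end{equation*}
the right-hand side arising from $\Phi_\delta(p_{j,1})=1$ and $\Phi_\delta(p_{j,2})=0$. Since $\psi_\delta'\ge 0$, the gradient term is nonnegative and may be discarded, leaving $\frace\int_\Omega f_\tau(u_\e)\psi_\delta(-u_\e)\,dx\le 4\pi N_1$. Letting $\delta\to 0$ and invoking monotone convergence (the integrand is nonnegative on $\{u_\e<0\}$ and vanishes on $\{u_\e\ge 0\}$), I obtain $A_\e\le 4\pi N_1$, whence $B_\e\le 4\pi N_2$ and
\begin{equation*}
\frace\int_\Omega|f_\tau(u_\e)|\,dx\le 4\pi(N_1+N_2)=:M_0.
\end{equation*}

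The main obstacle is precisely the sign change of $f_\tau$: integrating the equation controls only the difference $A_\e-B_\e$, not the total mass, so a separate mechanism is needed for one of the one-sided pieces. The device that makes this work is the favorable sign $\psi_\delta'\ge 0$ of the gradient term, together with the observation that the singular vortex data enters only through the finite, $\e$-independent contribution $4\pi N_1$. Some care must be taken to justify the integration by parts in the presence of the Dirac masses; this is legitimate because $\Phi_\delta$ is constant in a neighborhood of each vortex (so cutting out small disks and sending their radii to zero reproduces exactly the Dirac terms) and because $\nabla u_\e$ is square-integrable on the support of $\psi_\delta'(-u_\e)$.
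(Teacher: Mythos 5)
Your proposal is correct, and it takes a genuinely different route from the paper's. You first integrate the equation over the closed torus to pin down the signed mass exactly, $\frac{1}{\e^2}\int_\Omega f_\tau(u_\e)\,dx=4\pi(N_1-N_2)$, and then bound the positive part alone by testing against the smoothed indicator $\psi_\delta(-u_\e)$ of $\{u_\e<0\}$: the gradient term $\int\psi_\delta'(-u_\e)|\nabla u_\e|^2$ has a favorable sign and can be discarded, while the test function extends smoothly as the constant $1$ (resp.\ $0$) across each point of $Z_1$ (resp.\ $Z_2$), so the Dirac masses contribute exactly $4\pi N_1$ and the integration by parts is legitimate (if some $m_{j,1}=0$ the point carries no singularity but also contributes nothing, so the bound $\le 4\pi N_1$ survives). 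This gives the explicit constant $4\pi(N_1+N_2)$. The paper instead multiplies the equation by the bounded function $\frac{1-e^{u_\e}}{1+e^{u_\e}}$, obtaining the identity \eqref{squareint} together with the weighted gradient bound \eqref{intgramulti} and the weighted quadratic bound \eqref{squareintbdd} on the nonlinearity, and then recovers the $L^1$ bound by splitting $\Omega$ into the region where $|u_\e|$ is of order one (handled by a coarea/flux argument over sublevel sets, morally a discrete version of your truncation) and the regions where $|u_\e|$ is bounded away from $0$ (where the quadratic bound dominates the linear one). Your argument is shorter and more elementary for the stated conclusion; the paper's longer detour is not wasted, though, since \eqref{squareint}--\eqref{squareintbdd} are reused later, e.g.\ in the proof of Lemma \ref{speedofconvergence}. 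The only point to spell out in a final write-up is the monotone (or Fatou) passage $\delta\to0$, which is immediate because the integrand is nonnegative and, for fixed $\e$, dominated by the bounded function $|f_\tau(u_\e)|/\e^2\in L^1(\Omega)$.
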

\begin{proof} We observe that, for any $a\in(0,\infty)$, it holds
\begin{equation}
\begin{aligned}\label{squareint0}
\Delta u_\e\Big(\frac{1-e^{u_\e}}{a+e^{u_\e}}\Big)=\textrm{div}\Big[\nabla u_\e\Big(\frac{1-e^{u_\e}}{a+e^{u_\e}}\Big)\Big]+\frac{(a+1)|\nabla u_\e|^2 e^{u_\e}}{(a+e^{u_\e})^2}.
\end{aligned}
\end{equation}
Then, multiplying both sides of the equation (\ref{maineq}) by $\frac{1-e^{u_\e}}{a+e^{u_\e}}$
and integrating over $\Omega$, we conclude that
\begin{equation}
\begin{aligned}\label{squareint}
 \int_\Omega\frac{(a+1)|\nabla u_\e|^2 e^{u_\e}}{(a+e^{u_\e})^2}+\frac{1}{\e^2}\frac{e^{u_\e}(1-e^{u_\e})^2}{(\tau+e^{u_\e})^3(a+e^{u_\e})}dx=4\pi(\frac{N_1}{a}+N_2).
\end{aligned}
\end{equation}
Let us fix $a=1$.
Then there exist some constants $M_1, M_2\ge0$ such that
 \begin{equation}
 \label{intgramulti}
\int_{\Omega}\frac{|\nabla u_\e|^2e^{u_\e}}{(1+e^{u_\e})^2}dx\le M_1,
 \end{equation}
and
 \begin{equation}
 \label{squareintbdd}
 \int_{\Omega}\frace\frac{e^{u_\e}(1-e^{u_\e})^2}{(\tau+e^{u_\e})^3(1+e^{u_\e})}dx\le M_2.
 \end{equation}

We also see that there exists $\delta_{\e,1}\in(1,2)$ such that
 \begin{equation}
\int_{\{u_\e=-\delta_{\e,1}\}}|\nabla u_\e|dS=\int^{-1}_{-2}\Big(\int_{\{u_\e=r\}}|\nabla u_\e|dS\Big)dr,
\end{equation}
and  there exists a constant $c_0>0$ such that
 \begin{equation}
\int_{\{-2\le u_\e\le0\}}|\nabla u_\e|^2dx\le c_0\int_{\{-2\le u_\e\le0\}}\frac{|\nabla u_\e|^2e^{u_\e}}{(1+e^{u_\e})^2}dx\le c_0M_1.
\end{equation}
Hence we also have
\begin{equation}
\begin{aligned}\label{uppnor}
\int_{\{u_\e=-\delta_{\e,1}\}}|\nabla u_\e|dS
&=\int^{-1}_{-2}\Big(\int_{\{u_\e=r\}}|\nabla u_\e|dS\Big)dr
\\&=\int_{\{-2\le u_\e\le-1\}}|\nabla u_\e|^2dx
\le c_0M_1.
\end{aligned}
\end{equation}
Let $\nu$ be  an exterior unit normal vector to $\partial\{x\in\Omega\ |\ -\delta_{\e,1}\le u_\e\le0\}$.
By using $\frac{\partial u_\e}{\partial \nu}\Big|_{u_\e=0}\ge0$ and (\ref{uppnor}),
we see that
\begin{equation}
\begin{aligned}\label{deltaepminus1}
0&\le\int_{\{-\delta_{\e,1}\le u_\e\le0\}}\frace \frac{e^{u_\e}(1-e^{u_\e})}{(\tau+e^{u_\e})^3}dx
=-\int_{\{-\delta_{\e,1}\le u_\e\le0\}}\Delta u_\e dx
\\&=-\int_{\{u_\e=-\delta_{\e,1}\}}\frac{\partial u_\e}{\partial \nu}dS-\int_{\{u_\e=0\}}\frac{\partial u_\e}{\partial \nu}dS
\\&\le \int_{\{u_\e=-\delta_{\e,1}\}}|\nabla u_\e|dS
\le c_0M_1.
\end{aligned}
\end{equation}
The same argument with minor changes shows that we can find constants $\delta_{\e,2}\in(1,2)$ and $c_1>0$ such that
\begin{equation}\label{deltaepplus1}
\begin{aligned}
\int_{\{0\le u_\e\le \delta_{\e,2}\}}\Big|\frace \frac{e^{u_\e}(1-e^{u_\e})}{(\tau+e^{u_\e})^3}\Big|dx
\le c_1M_1.
\end{aligned}
\end{equation}
Moreover, there exist constants $c_2, c_3>0$ such that
\begin{equation}\label{deltaepminus2}
\begin{aligned}
&\int_{\{u_\e\le-\delta_{\e,1}\}}\Big|\frace \frac{e^{u_\e}(1-e^{u_\e})}{(\tau+e^{u_\e})^3}\Big|dx
\le c_2\int_{\{u_\e\le-\delta_{\e,1}\}}\frace \frac{e^{u_\e}(1-e^{u_\e})^2}{(\tau+e^{u_\e})^3(1+e^{u_\e})}dx
\le c_2M_2,
\end{aligned}
\end{equation}
and
\begin{equation}\label{deltaepplus2}
\begin{aligned}
&\int_{\{u_\e\ge\delta_{\e,2}\}}\Big|\frace \frac{e^{u_\e}(1-e^{u_\e})}{(\tau+e^{u_\e})^3}\Big|dx
\le c_3\int_{\{u_\e\ge\delta_{\e,2}\}}\frace \frac{e^{u_\e}(1-e^{u_\e})^2}{(\tau+e^{u_\e})^3(1+e^{u_\e})}dx
\le c_3M_2.
\end{aligned}
\end{equation}
The desired conclusion follows by using
(\ref{deltaepminus1}), (\ref{deltaepplus1}), (\ref{deltaepminus2}) and (\ref{deltaepplus2}).
\end{proof}
Let us recall the following form of the Harnack inequality which will be widely used in the sequel
(see \cite{BT} and \cite{GT}).
\begin{lemma}\label{harnarkineq}
Let $D\subseteq\RN$ be a smooth bounded domain and $v$ satisfy:
$$-\Delta v=f\ \textrm{in}\ D,$$
with $f\in L^p(D)$, $p>1$. For any subdomain $D'\subset\subset D$, there exist two positive constants $\sigma\in(0,1)$ and $\gamma>0$,
depending on $D'$ only such that:

$$(a)\ \textrm{if}\ \sup_{\partial D} v\le C,\ \textrm{then}\ \sup_{D'} v\le\sigma\inf_{D'}v+(1+\sigma)\gamma\|f\|_{L^p}+(1-\sigma)C,$$

$$(b)\ \textrm{if}\ \inf_{\partial D} v\ge -C,\ \textrm{then}\ \sigma\sup_{D'} v\le\inf_{D'}v+(1+\sigma)\gamma\|f\|_{L^p}+(1-\sigma)C.$$
\end{lemma}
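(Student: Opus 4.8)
The plan is to reduce the statement to the classical Harnack inequality for nonnegative harmonic functions by splitting off the contribution of the inhomogeneous term. Given a solution $v$ of $-\Delta v = f$ in $D$, I would write $v = v_1 + v_2$, where $v_1$ solves the Dirichlet problem $-\Delta v_1 = f$ in $D$ with $v_1 = 0$ on $\partial D$, and $v_2 = v - v_1$ is harmonic in $D$ with $v_2 = v$ on $\partial D$. Since $D$ is smooth and $f \in L^p(D)$ with $p>1$, the Calder\'on--Zygmund estimates give $v_1 \in W^{2,p}(D)$ with $\|v_1\|_{W^{2,p}(D)} \le c\|f\|_{L^p(D)}$; because $D \subseteq \RN$ is two-dimensional and $p>1$, the Sobolev embedding $W^{2,p}(D) \hookrightarrow C^0(\overline{D})$ yields a constant $\gamma>0$ depending only on $D$ (hence admissible as a constant depending on $D'$) such that $\|v_1\|_{L^\infty(D)} \le \gamma\|f\|_{L^p(D)}$ (see \cite{GT}). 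In particular $|v_1| \le \gamma\|f\|_{L^p}$ throughout $D'$.

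For the harmonic part I would exploit the sign hypotheses on $\partial D$. Let $C_H>1$ denote the Harnack constant associated to the pair $D' \subset\subset D$, i.e. the constant for which every nonnegative harmonic function $w$ on $D$ satisfies $\sup_{D'} w \le C_H \inf_{D'} w$, and set $\sigma := 1/C_H \in (0,1)$. To prove (a), I assume $\sup_{\partial D} v \le C$ and consider $w := C - v_2$, which is harmonic in $D$ and satisfies $w = C - v \ge 0$ on $\partial D$, hence $w \ge 0$ in $D$ by the maximum principle. Applying the classical Harnack inequality to $w$ and rewriting $\sup_{D'} w = C - \inf_{D'} v_2$ and $\inf_{D'} w = C - \sup_{D'} v_2$, I obtain $\sup_{D'} v_2 \le \sigma\inf_{D'} v_2 + (1-\sigma)C$. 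Feeding back the elementary bounds $\sup_{D'} v \le \sup_{D'} v_1 + \sup_{D'} v_2$ and $\inf_{D'} v_2 \le \inf_{D'} v + \gamma\|f\|_{L^p}$, together with $|v_1|\le\gamma\|f\|_{L^p}$, produces exactly $\sup_{D'} v \le \sigma\inf_{D'} v + (1+\sigma)\gamma\|f\|_{L^p} + (1-\sigma)C$.

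Part (b) follows by the symmetric choice $w := v_2 + C$: under $\inf_{\partial D} v \ge -C$ one has $w \ge 0$ on $\partial D$, hence in $D$, and the classical Harnack inequality gives $\sigma\sup_{D'} v_2 \le \inf_{D'} v_2 + (1-\sigma)C$; combining with the $v_1$-bound as before yields $\sigma\sup_{D'} v \le \inf_{D'} v + (1+\sigma)\gamma\|f\|_{L^p} + (1-\sigma)C$. The only genuinely delicate points are bookkeeping ones: one must verify that the \emph{same} constant $\sigma = 1/C_H$ serves both inequalities, which it does, since both rest on the single Harnack constant of $D' \subset\subset D$, and that $\gamma$ and $\sigma$ depend on $D'$ alone and not on $v$, $f$, or $C$. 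The main conceptual step, already standard, is the reduction via the decomposition; once $v_1$ is controlled in $L^\infty$ by $\|f\|_{L^p}$ and the harmonic remainder is handled by the maximum principle plus classical Harnack, the stated estimates reduce to a direct algebraic rearrangement, so I do not anticipate a serious obstacle here.
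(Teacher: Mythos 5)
Your argument is correct: the decomposition into a Newtonian part controlled in $L^\infty$ by $\|f\|_{L^p}$ via $W^{2,p}\hookrightarrow C^0$ (valid for $p>1$ in dimension two) plus a harmonic part handled by the maximum principle and the classical Harnack inequality, with $\sigma=1/C_H$, reproduces both estimates after the algebraic rearrangement you describe. The paper itself gives no proof of this lemma --- it is quoted from \cite{BT} and \cite{GT} --- and your proof is essentially the standard one found there, so there is nothing further to reconcile.
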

Moreover, we have the following lemmas.
\begin{lemma}\label{green}Let $u_\e$ be a sequence of solutions of (\ref{maineq}).
Let $K$ be a compact subset such that $K\subset\Omega\setminus Z$.
Then there exist constants $a,b>0$ such that $|u_\e(x_\e)-u_\e(z_\e)|\le ar^2+b$ for any $r>0$ and  $z_\e\in B_{\e r}(x_\e)\subseteq K$.
\end{lemma}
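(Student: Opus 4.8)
The plan is to combine the Green's representation of $u_\e$ on the torus with the uniform $L^1$ bound of Lemma \ref{bddofintegration} and the elementary fact that the nonlinearity is uniformly bounded, $|f_\tau(t)|\le C_0$ for all $t\in\R$ (indeed $\frac{e^t(1-e^t)}{(\tau+e^t)^3}$ is continuous and tends to $0$ as $t\to\pm\infty$). Since $-\Delta_x G(x,y)=\delta_y-\frac{1}{|\Omega|}$ and the right-hand side of (\ref{maineq}) has zero mean, every solution satisfies
\begin{equation*}
u_\e(x)=\frace\int_\Omega G(x,y)f_\tau(u_\e(y))\,dy+u_0(x)+\bar u_\e,
\end{equation*}
where $\bar u_\e$ is the average of $u_\e$. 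Evaluating this at $x_\e$ and $z_\e$ and subtracting, the constant $\bar u_\e$ drops out, so it suffices to estimate
\begin{equation*}
\frace\int_\Omega\big[G(x_\e,y)-G(z_\e,y)\big]f_\tau(u_\e(y))\,dy+\big[u_0(x_\e)-u_0(z_\e)\big].
\end{equation*}

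First I would dispose of the regular contributions. Writing $G(x,y)=-\frac{1}{2\pi}\ln|x-y|+\gamma(x,y)$ with $\gamma,\nabla_x\gamma$ bounded on $\Omega$, and recalling that $u_0$ is smooth on the compact set $K\subset\Omega\setminus Z$, one has $|u_0(x_\e)-u_0(z_\e)|\le C|x_\e-z_\e|\le C\e r$ and $|\gamma(x_\e,y)-\gamma(z_\e,y)|\le C\e r$. Since $B_{\e r}(x_\e)\subseteq\Omega$ forces $\e r\le\mathrm{diam}(\Omega)$, the $u_0$ term is bounded by a constant, and the $\gamma$ term contributes $\frace\,C\e r\int_\Omega|f_\tau(u_\e)|\,dy\le C\e r\,M_0$, again bounded; both are absorbed into $b$.

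It then remains to control the singular part $\frac{1}{2\pi}\frace\int_\Omega\big|\ln|x_\e-y|-\ln|z_\e-y|\big|\,|f_\tau(u_\e(y))|\,dy$, which I would split over the far region $\Omega\setminus B_{2\e r}(x_\e)$ and the near region $B_{2\e r}(x_\e)$. On the far region the segment $[x_\e,z_\e]$ stays at distance $\ge\e r$ from $y$, so the mean value theorem gives $\big|\ln|x_\e-y|-\ln|z_\e-y|\big|\le|x_\e-z_\e|/(\e r)\le 1$; integrating against $\frace|f_\tau(u_\e)|$ and invoking Lemma \ref{bddofintegration} yields a constant, once more absorbed into $b$. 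On the near region I would instead use $|f_\tau(u_\e)|\le C_0$ and rescale by $y=x_\e+\e\eta$, $z_\e=x_\e+\e w$ with $|w|<r$; the factors $\e$ inside the logarithm cancel and $\frace\,dy=d\eta$, reducing the estimate to
\begin{equation*}
\frac{C_0}{2\pi}\int_{B_{2r}(0)}\big|\ln|\eta|-\ln|\eta-w|\big|\,d\eta.
\end{equation*}

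The heart of the matter, and the step I expect to be most delicate, is to show that this last integral is $O(r^2)$, which produces the claimed $ar^2$. One must \emph{not} bound $\big|\ln|\eta|-\ln|\eta-w|\big|$ by $|\ln|\eta||+|\ln|\eta-w||$, since that crude split generates a spurious factor $\ln r$; the logarithmic cancellation inside the difference must be kept. For $|\eta|\ge 2|w|$ one has $\big|\ln|\eta|-\ln|\eta-w|\big|\le 2|w|/|\eta|$, whose integral over the annulus is $\le C|w|r\le Cr^2$. For $|\eta|<|w|/2$ the factor $\ln|\eta-w|=\ln|w|+O(1)$ is essentially constant, so the integrand is $\le\ln(|w|/|\eta|)+C$ and $\int_{|\eta|<|w|/2}\ln(|w|/|\eta|)\,d\eta=C|w|^2\le Cr^2$ after the scaling $\eta=|w|s$; the region near $\eta=w$ is symmetric, and on the intermediate annulus $|w|/2\le|\eta|\le 2|w|$ (minus $B_{|w|/2}(w)$) the integrand is bounded while the area is $O(r^2)$. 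Summing the four pieces gives $Cr^2$, and collecting all contributions proves $|u_\e(x_\e)-u_\e(z_\e)|\le ar^2+b$. The only remaining, and harmless, subtlety is to read $|x-y|$ as the intrinsic distance on the torus and to keep the near ball below the injectivity radius, which is immaterial after restricting to the metric ball.
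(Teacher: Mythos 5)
Your proposal is correct and follows essentially the same route as the paper: Green's representation, the uniform $L^1$ bound of Lemma \ref{bddofintegration}, the split at radius $2\e r$ with the mean value theorem giving $O(1)$ on the far region, and a rescaled $O(r^2)$ estimate of the logarithmic difference on the near region. The only cosmetic difference is that the paper dispatches the near-region integral in one line via $\bigl|\ln|z_\e-y|-\ln|x_\e-y|\bigr|\le |x_\e-z_\e|\bigl(\tfrac{1}{|z_\e-y|}+\tfrac{1}{|x_\e-y|}\bigr)$ and a direct integration of $1/|y|$ over $B_{4\e r}$, whereas you reach the same $O(r^2)$ by a four-region case analysis; both correctly preserve the logarithmic cancellation you rightly flag as the key point.
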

\begin{proof}
By using the Green's representation formula for a solution $u_\e$ of (\ref{maineq}),
we see that for $x\in K\subset\subset\Omega\setminus Z$,
\begin{equation}
\begin{aligned}\label{greenrp}
u_\e(x)
&=\frac{1}{|\Omega|}\int_\Omega u_\e(y)dy+\int_\Omega G(x,y)(-\Delta u_\e(y))dy
\\&=\frac{1}{|\Omega|}\int_\Omega u_\e(y)dy
\\&+\int_\Omega G(x,y)\Big(\frace \frac{e^{u_\e}(1-e^{u_\e})}{(\tau+e^{u_\e})^3}-4\pi\sum^{d_{1}}_{j=1}m_{j,1}\delta_{p_{j,1}}+4\pi\sum^{d_{2}}_{j=1}m_{j,2}\delta_{p_{j,2}}\Big)dy
\\&=\frac{1}{|\Omega|}\int_\Omega u_\e(y)dy
+\int_\Omega G(x,y)\frace \frac{e^{u_\e}(1-e^{u_\e})}{(\tau+e^{u_\e})^3}dy+O(1).
\end{aligned}
\end{equation}
Then, \begin{equation}
\begin{aligned}
u_\e(x)-u_\e(z)=\int_\Omega (G(x,y)-G(z,y))\frace\frac{e^{u_\e}(1-e^{u_\e})}{(\tau+e^{u_\e})^3}dy+O(1)
\\ \textrm{for}\ x,z\in K.
\end{aligned}
\end{equation}
In view of  Lemma \ref{bddofintegration}, we see that
 \begin{equation}
\begin{aligned}
u_\e(x)-u_\e(z)=\frac{1}{2\pi\e^2}\int_\Omega \ln\Big(\frac{|z-y|}{|x-y|}\Big)\frac{e^{u_\e}(1-e^{u_\e})}{(\tau+e^{u_\e})^3}dy+O(1)
\\ \textrm{for}\ x,z\in K.
\end{aligned}
\end{equation}
For fixed $r>0$, we assume that $z_\e\in B_{\e r}(x_\e) \subseteq  K$.
By the mean value theorem, there exists $\theta=\theta(\e,y)\in(0,1)$ such that
 \begin{equation}
\begin{aligned}
|\ln|z_\e-y|-\ln|x_\e-y||&=\frac{||z_\e-y|-|x_\e-y||}{\theta|z_\e-y|+(1-\theta)|x_\e-y|}
\\&\le\frac{|x_\e-z_\e|}{\theta|z_\e-y|+(1-\theta)|x_\e-y|}.
\end{aligned}
\end{equation}
For any $y\in\Omega\setminus B_{2\e r}(x_\e)$, we have $|z_\e-y|\ge \e r$ and $|x_\e-y|\ge 2\e r$.
Thus, we see that
\begin{equation}
\begin{aligned}
|\ln|z_\e-y|-\ln|x_\e-y||\le\frac{\e r}{\theta \e r+(1-\theta)2\e r}=\frac{1}{2-\theta}\le1\ \textrm{on}\ \Omega\setminus B_{2\e r}(x_\e).
\end{aligned}
\end{equation}
At this point, Lemma \ref{bddofintegration} implies that
\begin{equation}
\begin{aligned}
 \frac{1}{2\pi\e^2}\int_{\Omega\setminus B_{2\e r}(x_\e)}\Big|\ln\Big(\frac{|z_\e-y|}{|x_\e-y|}\Big)\frac{e^{u_\e}(1-e^{u_\e})}{(\tau+e^{u_\e})^3}\Big|dy=O(1).
\end{aligned}
\end{equation}
We also see that
\begin{equation}
\begin{aligned}
\int_{B_{2\e r}(x_\e)}\Big|\ln\Big(\frac{|z_\e-y|}{|x_\e-y|}\Big)\Big|dy
&\le\int_{B_{2\e r}(x_\e)}\frac{|x_\e-z_\e|}{\theta|z_\e-y|+(1-\theta)|x_\e-y|}dy
\\&\le\int_{B_{2\e r}(x_\e)}\frac{|x_\e-z_\e|}{\min\{|z_\e-y|,|x_\e-y|\}}dy
\\&\le\int_{B_{2\e r}(x_\e)}\frac{|x_\e-z_\e|}{|z_\e-y|}+\frac{|x_\e-z_\e|}{|x_\e-y|}dy
\\&\le\int_{B_{4\e r}(z_\e)}\frac{|x_\e-z_\e|}{|z_\e-y|}dy
+\int_{B_{2\e r}(x_\e)}\frac{|x_\e-z_\e|}{|x_\e-y|}dy
\\&\le2\int_{B_{4\e r}(0)}\frac{|x_\e-z_\e|}{|y|}dy
\le16r^2\e^2\pi.
\end{aligned}
\end{equation}
Therefore we conclude that
\begin{equation}
\begin{aligned}
&\frac{1}{2\pi\e^2}\int_{B_{2\e r}(x_\e)}\Big|\ln\Big(\frac{|z_\e-y|}{|x_\e-y|}\Big)\frac{e^{u_\e}(1-e^{u_\e})}{(\tau+e^{u_\e})^3}\Big|dy
\le8r^2\sup_{t\in\mathbb{R}}\Big|\frac{e^t(1-e^t)}{(\tau+e^t)^3}\Big|,
\end{aligned}
\end{equation}
and we readily obtain constants $a,b>0$ such that for any $r>0$, it holds
\begin{equation}
\begin{aligned}
|u_\e(x_\e)-u_\e(z_\e)|\le ar^2+b\  \textrm{for}\ z_\e\in B_{\e r}(x_\e)\subseteq K.
\end{aligned}
\end{equation}
\end{proof}
\begin{lemma}\label{uniformestimates}
Let $K$ be a connected  compact set such that $K\subset \Omega\setminus Z$. Suppose that
there exists a sequence of solutions $\{u_\e\}$ of (\ref{maineq}) such that
$$\lim_{\e\to0}\Big(\inf_K|u_\e|\Big)=0.$$
Then, we have $\|u_\e\|_{L^\infty(K)}\to0$ as $\e\to0$.
\end{lemma}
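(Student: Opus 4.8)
The plan is to argue by contradiction, reducing everything to a blow-up analysis at scale $\e$ governed by the limiting problem of Section~\ref{limitingsection}. Suppose the conclusion fails; then along a subsequence there is $\delta_0>0$ with $\|u_\e\|_{L^\infty(K)}\ge\delta_0$, while the hypothesis $\inf_K|u_\e|\to0$ produces points where $|u_\e|$ is arbitrarily small. Fix a small $\delta\in(0,\delta_0)$ to be chosen at the very end. Since $K$ is connected and $u_\e$ is continuous, $|u_\e|$ takes a value below $\delta$ and a value above $\delta$, so by the intermediate value theorem there exist $w_\e\in K$ with $|u_\e(w_\e)|=\delta$. The whole point will be that blowing up at $w_\e$ forces a limiting entire solution which is incompatible with $|u_\e(w_\e)|=\delta$.

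First I would rescale by setting $\tilde u_\e(y)\equiv u_\e(w_\e+\e y)$. The crucial gain is that this removes the singular factor $\frace$: the function $\tilde u_\e$ solves $\Delta\tilde u_\e+f_\tau(\tilde u_\e)=0$ on balls $B_R(0)$ (valid for small $\e$, since the $w_\e$ stay in a fixed compact subset of $\Omega\setminus Z$), and its right-hand side is uniformly bounded because $f_\tau$ is a bounded function. Since $u_\e$ need not be locally bounded a priori, the role of Lemma~\ref{green} is essential here: applying it on a slightly larger compact subset of $\Omega\setminus Z$ containing $K$ in its interior, with $r=|y|$, gives the oscillation bound $|\tilde u_\e(y)-\tilde u_\e(0)|\le a|y|^2+b$, and as $|\tilde u_\e(0)|=|u_\e(w_\e)|=\delta$ this yields local uniform bounds on $\tilde u_\e$. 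Standard elliptic estimates ($W^{2,p}$, then $C^{1,\alpha}$) and a diagonal argument then give a subsequence converging in $C^1_{\mathrm{loc}}(\RN)$ to an entire solution $w$ of (\ref{limitingproinr2}) with $|w(0)|=\delta$ and $\limsup_{|y|\to\infty}|w(y)|/|y|^2\le a$. Moreover, rescaling Lemma~\ref{bddofintegration} shows $\int_{B_R}|f_\tau(\tilde u_\e)|\,dy\le M_0$ for every $R$, whence $f_\tau(w)\in L^1(\RN)$ with $2\pi|\beta|\le\intr|f_\tau(w)|\,dy\le M_0$, where $\beta$ is as in (\ref{defofbeta}).

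It then remains to classify $w$. Since $w$ satisfies the growth condition and $f_\tau(w)\in L^1(\RN)$, Lemma~\ref{radiallysymm} shows $w$ is radially symmetric about some center $x_0$, so $w(x)=u(|x-x_0|;s)$ with $s=w(x_0)$ in the notation of (\ref{limitingpro}), and $\beta=\beta(s)$. If $w$ were nontopological, then by (\ref{forkelvintrasformation}) it would be of one strict sign on all of $\RN$ and, being radially monotone, would attain its minimum modulus $|s|$ at the center, so $|w(0)|\ge|s|$. On the other hand $2\pi|\beta(s)|\le M_0$, while by Lemma~\ref{propertyofentiresolution}(ii)--(iii) the quantity $|\beta(s)|$ tends to $+\infty$ as $s\to0$; hence there is $s_0=s_0(\tau,M_0)>0$ such that $|\beta(s)|\le M_0/(2\pi)$ forces $|s|\ge s_0$, and therefore $|w(0)|\ge s_0$. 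Choosing $\delta\in(0,s_0)$ at the outset rules out the nontopological case, so $w$ must be the topological solution $w\equiv0$ of Lemma~\ref{propertyofentiresolution}(i). This contradicts $|w(0)|=\delta>0$, and the contradiction proves $\|u_\e\|_{L^\infty(K)}\to0$.

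I expect the main obstacle to be the first part, namely setting up the blow-up so that it genuinely produces a bona fide entire solution. Because $u_\e$ carries no a priori local $L^\infty$ bound on $K$ (solutions may diverge, as in cases (b)--(c) of Theorem~\ref{BrezisMerletypealternatives}), the passage to the limit hinges entirely on combining the rescaling, which trades $\frace$ for a bounded nonlinearity, with the oscillation estimate of Lemma~\ref{green}; without the latter one cannot even extract a locally uniform bound. Once the limit is identified, the remaining difficulty is conceptual rather than technical: it is the sign-definiteness of nontopological entire solutions, together with the uniform mass bound $M_0$ from Lemma~\ref{bddofintegration} forcing the center value $|s|$ away from $0$, that prevents the blow-up limit from being a nonzero solution and pins it to $w\equiv0$.
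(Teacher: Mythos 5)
Your proposal is correct and follows essentially the same route as the paper: blow up at scale $\e$ around a point (produced by the intermediate value theorem on the connected set $K$) where $|u_\e|$ equals a prescribed small value, use Lemma \ref{green} for compactness and Lemma \ref{radiallysymm} to get a sign-definite radial entire limit, and then contradict the uniform mass bound $M_0$ of Lemma \ref{bddofintegration} via the fact from Lemma \ref{propertyofentiresolution} that $|\beta(s)|\to\infty$ as $s\to0$. The only cosmetic difference is the order of quantifiers — the paper fixes the target value $s_1$ so that $|\beta(s_1)|>M_0/\pi$ in advance, whereas you fix $\delta$ small at the end and bound the center value from below — and your unproved assertion that a nontopological radial solution attains its minimum modulus at its center follows at once from the sign information in (\ref{forkelvintrasformation}) and the radial ODE, exactly as the paper itself implicitly uses.
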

\begin{proof}
Choose a sequence of points $\{x_\e\}\subseteq K$ such that
$|u_\e(x_\e)|=\inf_K|u_\e|$.
Passing to a subsequence (still denoted by $u_\e$),
we may assume that $\lim_{\e\to0}x_\e=x_0\in K$.
We argue by contradiction.
Suppose that there exists a positive constant $c_K>0$ and a sequence $\{z_\e\}\subseteq K$ such that
$\sup_K|u_\e|=|u_\e(z_\e)|\ge c_K$ for small $\e>0$.
We will use the constant $M_0\ge0$ obtained in Lemma \ref{bddofintegration}.
If $u_\e(z_\e)\le-c_K$ then, by using Lemma \ref{propertyofentiresolution}, we can choose $s_1<0$ such that
$$\beta(s_1)>\frac{M_0}{\pi}\ \textrm{and}\ -c_K<s_1<0.$$
If $u_\e(z_\e)\ge c_K$ then, by using Lemma \ref{propertyofentiresolution}, we can choose $s_1>0$ such that
$$\beta(s_1)<-\frac{M_0}{\pi}\ \textrm{and}\ 0<s_1<c_K.$$
We can also choose $y_\e\in K$ such that $u_\e(y_\e)=s_1$ by the intermediate value theorem.
Let $\bar{u}_\e(x)=u_\e(\e x+y_\e)$ for $x\in\Omega_{\e,y_\e}\equiv\{\ x\in\RN\ |\ \e x+y_\e\in K_1\ \}$ where $K_1$ is a compact subset such that
$K\subset\textrm{int}(K_1)\subset\Omega\setminus Z$.
Then $\bar{u}_\e$ satisfies
\begin{equation}
\begin{aligned}
\left\{
 \begin{array}{ll}
 \Delta \bar{u}_\e+\frac{e^{\bar{u}_\e}(1-e^{\bar{u}_\e})}{(\tau+e^{\bar{u}_\e})^3}=0\ \textrm{on}\ \Omega_{\e,y_\e},
 \\ \bar{u}_\e(0)=s_1,
 \\ \int_{\Omega_{\e,y_\e}}\Big|\frac{e^{\bar{u}_\e}(1-e^{\bar{u}_\e})}{(\tau+e^{\bar{u}_\e})^3}\Big| dx\le M_0.
 \end{array}\right.
\end{aligned}
\end{equation}
By using Lemma \ref{green}, we see that $\bar{u}_\e$ is bounded in $C^0_{\textrm{loc}}(\Omega_{\e,y_\e})$.
Passing to a subsequence, we may assume that $\bar{u}_\e$ converges in
$C^2_{\textrm{loc}}(\RN)$ to a function $u_*$ which is a solution of
\begin{equation}
\begin{aligned}
\left\{
 \begin{array}{ll}
 \Delta u_*+\frac{e^{u_*}(1-e^{u_*})}{(\tau+e^{u_*})^3}=0\ \textrm{on}\ \RN,
 \\ u_*(0)=s_1,
 \\ \intr\Big|\frac{e^{u_*}(1-e^{u_*})}{(\tau+e^{u_*})^3}\Big|dx\le M_0.
 \end{array}\right.
\end{aligned}
\end{equation}
By using  Lemma \ref{green} and Lemma \ref{radiallysymm}, we conclude that $u_*$ is
radially symmetric with respect to some point $\bar{p}$ in $\RN$ and $u_*$ does not change sign.
Hence Lemma \ref{propertyofentiresolution} shows that
\begin{equation}
\begin{aligned}
M_0
\ge\Big|\int_{\RN}\frac{e^{u_*}(1-e^{u_*})}{(\tau+e^{u_*})^3}dx\Big|
=2\pi|\beta(u_*(\bar{p}))|\ge2\pi|\beta(s_1)|
>2M_0,
\end{aligned}
\end{equation}
which is the desired contradiction. Therefore, $\lim_{\e\to0}\|u_\e\|_{L^\infty(K)}=0$.
\end{proof}
As a corollary of Lemma \ref{uniformestimates}, we obtain the following proposition.
\begin{prop}\label{alternatives}
Let $u_\e$ be a sequence of solutions of (\ref{maineq}). Then, up to subsequences, one of the following holds true:

(a) $u_\e\to0$ uniformly on any compact subset of $\Omega\setminus Z$;

(b) for any compact subset $K\subset\Omega\setminus Z$, there exists $\nu_K>0$ such that $$\lim_{\e\to0}\Big(\sup_K u_\e\Big)\le-\nu_K;$$

(c) for any compact subset $K\subset\Omega\setminus Z$, there exists $\nu_K>0$ such that $$\lim_{\e\to0}\Big(\inf_K u_\e\Big)\ge\nu_K.$$
\end{prop}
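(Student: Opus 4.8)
The plan is to read off the trichotomy directly from Lemma~\ref{uniformestimates}, whose content is that on a connected compact set the mere approach of $|u_\e|$ to $0$ at a single point already forces $u_\e\to0$ uniformly. Accordingly, on a fixed connected compact set the three alternatives should correspond to whether $\inf_K|u_\e|$ stays bounded away from $0$, and, when it does, to the necessarily constant sign of $u_\e$ on $K$. The remaining work is to glue this local dichotomy into a single global alternative valid on every compact subset of $\Omega\setminus Z$.

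First I would establish the local statement. Fix a connected compact $K\subset\Omega\setminus Z$ and pass to a subsequence along which $\inf_K|u_\e|\to a\in[0,+\infty]$. If $a=0$, then Lemma~\ref{uniformestimates} gives $\|u_\e\|_{L^\infty(K)}\to0$. If $a>0$, then $|u_\e|\ge a/2>0$ on $K$ for small $\e$; since $u_\e$ is continuous and $K$ is connected, $u_\e$ keeps a fixed sign on $K$, and after a further extraction this sign is independent of $\e$. Hence either $u_\e\le-a/2$ on $K$ for all small $\e$ or $u_\e\ge a/2$ on $K$ for all small $\e$, which are precisely alternatives (b) and (c) with $\nu_K=a/2$.

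Next I would globalize. Because $\Omega$ is a torus and $Z$ is finite, $\Omega\setminus Z$ is open and connected, so it admits an exhaustion by connected compact sets $K_1\subset K_2\subset\cdots$ with $\bigcup_n K_n=\Omega\setminus Z$. Applying the local dichotomy to $K_1$, suppose first that $K_1$ is of type (a); then for each $n$ one has $\inf_{K_n}|u_\e|\le\|u_\e\|_{L^\infty(K_1)}\to0$ since $K_1\subset K_n$, and Lemma~\ref{uniformestimates} upgrades this to $\|u_\e\|_{L^\infty(K_n)}\to0$ along the same subsequence. Suppose instead that $K_1$ is of type (b), say $u_\e\le-\nu_1<0$ on $K_1$; then applying the local dichotomy to each $K_n\supset K_1$ excludes both (a) and (c) for $K_n$, since either would force $u_\e$ to be nonnegative somewhere, or to tend to $0$ somewhere, on $K_1$, contradicting $u_\e\le-\nu_1$. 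Hence $K_n$ is again of type (b), and the type (c) case is symmetric. A diagonal extraction over $n$ then yields one subsequence realizing the same alternative on every $K_n$, and since an arbitrary compact $K\subset\Omega\setminus Z$ lies in some $K_N$, the alternative transfers to $K$ with $\nu_K:=\nu_N$.

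The hard part is exactly this globalization, namely showing that a single alternative holds on all compact subsets at once. It rests on two features of the geometry: the connectedness of $\Omega\setminus Z$, which both provides a connected exhaustion and forces $u_\e$ to have constant sign wherever $|u_\e|$ is bounded away from $0$; and the monotonicity of the alternatives along the nested sets $K_n\subset K_{n+1}$, which prevents the sign of $u_\e$ from switching as the compact set grows. By contrast, the per-set dichotomy is immediate once Lemma~\ref{uniformestimates} is in hand.
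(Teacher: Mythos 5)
Your proof is correct and rests on exactly the same key ingredient as the paper's, namely Lemma~\ref{uniformestimates} combined with the connectedness of compact subsets of $\Omega\setminus Z$ to force either uniform smallness or a uniform sign. The paper merely packages the argument by contraposition (if (b) and (c) both fail, the intermediate value theorem on a connected compact set containing the two offending sets produces points where $|u_\e|\to 0$, and Lemma~\ref{uniformestimates} then yields (a)), whereas you run the same dichotomy directly on a connected exhaustion and diagonalize; the two arguments are essentially identical in content.
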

\begin{proof}
In view of Lemma \ref{uniformestimates},
it suffices to show that (a) holds whenever both (b) and (c) fail to hold.
Suppose that (b) and (c) do not hold.
Then, we can take compact sets $K_1,K_2\subset\Omega\setminus Z$ and sequences $\{x_{1,\varepsilon}\}\subset K_1$, $\{x_{2,\varepsilon}\}\subset K_2$ such that
\begin{equation*}
 \lim_{\varepsilon\to 0}u_\varepsilon(x_{1,\varepsilon})\geq 0 \quad {\rm and} \quad
 \lim_{\varepsilon\to 0}u_\varepsilon(x_{2,\varepsilon})\leq 0.
\end{equation*}
For any compact set $K\subset\Omega\setminus Z$, taking a connected compact set $\tilde{K}\subset\Omega\setminus Z$ such that
\begin{equation*}
 \tilde{K}\supseteq K \cup K_1 \cup K_2,
\end{equation*}
and using the intermediate value theorem, we can obtain a sequence $\{x_\varepsilon\}\subseteq \tilde{K}$ satisfying
\begin{equation*}
 \lim_{\varepsilon\to 0}|u_\varepsilon(x_\varepsilon)|=0.
\end{equation*}
Hence, Lemma \ref{uniformestimates} yields that $\lim_{\varepsilon\to 0}\|u_\varepsilon\|_{L^\infty(\tilde{K})}=0$, which completes the proof.
\end{proof}

\noindent
{\bf The proof of Theorem \ref{BrezisMerletypealternatives} completed.}\\
First of all, we assume that (b) in Proposition \ref{alternatives} holds.
In this case, we also suppose that there exists $r\in(0,\frac{1}{3}\textrm{dist}(Z_1,Z_2))$ such that $B_{2r}(p_{i,1})\cap B_{2r}(p_{j,1})=\emptyset$ when $i\neq j$ and
$\lim_{\e\to0}\Big(\sup_{\cup_{j=1}^{d_1}B_r(p_{j,1})}u_\e\Big)\ge0$. By using
$\lim_{x\to p_{j,1}}u_\e(x)=-\infty$ and the intermediate value theorem,
we see that there exists $x_\e\in\overline{\cup_{j=1}^{d_1}B_r(p_{j,1})}$ such that $|u_\e(x_\e)|=\inf_{\cup_{j=1}^{d_1}B_r(p_{j,1})}|u_\e|\to0$ as $\e\to0$. Let
$x_0\in\overline{\cup_{j=1}^{d_1}B_r(p_{j,1})}$ be the limit point of $x_\e$.
Passing to a subsequence, only one of the following two possibilities can be satisfied: either
$x_0\notin Z_1$ or $x_0\in Z_1$.

Case 1: $x_0\notin Z_1$.

Let us fix a  constant $d\in(0,\frac{1}{3}\textrm{dist}(x_0,Z))$. Since $\overline{B_d(x_0)}\subset\Omega\setminus Z$
and in particular
$\lim_{\e\to0}\Big(\inf_{\overline{B_d(x_0)}}|u_\e|\Big)=0$, then, in view of Lemma \ref{uniformestimates},
we see that $\lim_{\e\to0}\Big(\sup_{\overline{B_d(x_0)}}|u_\e|\Big)=0$.
This is a contradiction since we are assuming that Proposition \ref{alternatives} (b) holds.

Case 2: $x_0\in Z_1$.

For the sake of simplicity, we assume that $x_0=0\in Z_1$.
Since we are assuming that  Proposition \ref{alternatives} (b) holds, then
there exists $\gamma>0$ such that $\lim_{\e\to0}\Big(\sup_{|x|=r}u_\e\Big)<-\gamma$.
By the maximum principle, we see that $\sup_{|x|\le r}u_\e\le0$. We claim that
\begin{equation}\label{limxee}
\lim_{\e\to0}\frac{|x_\e|}{\e}=\infty.
\end{equation}
We argue by contradiction and suppose that $\liminf_{\e\to0}\frac{|x_\e|}{\e}<\infty$. Hence,
passing to a subsequence, we could assume that $\frac{|x_\e|}{\e}\le c$ for some constant $c>0$ and small $\e>0$.
Note that $u_\e(x)=2m_{j,1}\ln|x|+v_\e(x)$ near $x=0$ for some smooth function $v_\e$ and $1\le j\le d_1$.
Let $\hat{v}_\e(x)=v_\e(|x_\e|x)+2m_{j,1}\ln|x_\e|$ for $|x|<\frac{r}{|x_\e|}$. Then $\hat{v}_\e$ satisfies
\begin{equation}
\Delta \hat{v}_\e+\frac{|x_\e|^2}{\e^2}\frac{|x|^{2m_{j,1}}e^{\hat{v}_\e}(1-|x|^{2m_{j,1}}e^{\hat{v}_\e})}{(\tau+|x|^{2m_{j,1}}e^{\hat{v}_\e})^3}=0\ \textrm{on}\ B_{\frac{r}{|x_\e|}}(0).
\end{equation}
We also observe that
\begin{equation}\label{uppharnark}
\hat{v}_\e(x)=u_\e(|x_\e|x)-2m_{j,1}\ln|x|\le-2m_{j,1}\ln|x|\ \textrm{for}\ |x|\le \frac{r}{|x_\e|},
\end{equation}
and
\begin{equation}\label{lowharnk}\lim_{\e\to0}\hat{v}_\e\Big(\frac{x_\e}{|x_\e|}\Big)=\lim_{\e\to0}u_\e(x_\e)=0.
\end{equation}
Since $\frac{|x_\e|}{\e}\le c$ and $\sup_{t\ge0}\Big|\frac{t(1-t)}{(\tau+t)^3}\Big|<\infty$, then for any $p>1$ and $R>0$, there exists a constant $C_{p,R}>0$ such that
$\lim_{\e\to0}\|\Delta \hat{v}_\e\|_{L^p(B_R(0))}\le C_{p,R}$. By using
(\ref{uppharnark}), (\ref{lowharnk}), and  Lemma \ref{harnarkineq}, we see that for large $R>0$, there exist
$\sigma\in(0,1)$ and $\gamma>0$, independent of $\e>0$, such that
$$o(1)=\hat{v}_\e\Big(\frac{x_\e}{|x_\e|}\Big)\le\sup_{B_{R/2}(0)} \hat{v}_\e\le\sigma\inf_{B_{R/2}(0)}
\hat{v}_\e+(1+\sigma)\gamma\|\Delta \hat{v}_\e\|_{L^p(B_R(0))}-(1-\sigma)2m_{j,1}\ln R.$$
Hence $\hat{v}_\e$
is bounded in $C^0_{\textrm{loc}}(B_{\frac{r}{|x_\e|}}(0))$. Passing to a subsequence, we may assume that   $\lim_{\e\to0}\frac{x_\e}{|x_\e|}=y_0\in S^1$,
$\lim_{\e\to0}\frac{|x_\e|}{\e}=c_0\ge0$, and $\hat{v}_\e$ converges in $C^2_{\textrm{loc}}(\RN)$ to a function $\hat{v}$ satisfying
\begin{equation}
\Delta \hat{v}+\frac{c_0^2|x|^{2m_{j,1}}e^{\hat{v}}(1-|x|^{2m_{j,1}}e^{\hat{v}})}{(\tau+|x|^{2m_{j,1}}e^{\hat{v}})^3}=0\ \textrm{in}\ \RN.
\end{equation}
Then the function $\hat{u}=\hat{v}+2m_{j,1}\ln|x|\le0$ satisfies
\begin{equation}
\Delta \hat{u}+\frac{c_0^2e^{\hat{u}}(1-e^{\hat{u}})}{(\tau+e^{\hat{u}})^3}=4\pi m_{j,1}\delta_0\ \textrm{in}\ \RN.
\end{equation}
Since $\hat{u}\le0$, we have $c_0>0$ and since $\hat{u}(y_0)=\lim_{\e\to0}u_\e(x_\e)=0$,
we have $\hat{u}\equiv0$ by the strong maximum principle.
This is of course a contradiction and (\ref{limxee}) is proved.

At this point, let us fix a constant $s_2<0$ such that $\beta(s_2)\ge \frac{M_0}{\pi}$ (see (\ref{defbetas}) and Lemma \ref{bddofintegration}) and $-\gamma<s_2<0$. We can
choose $y_\e$ on a line segment joining $x_\e$ to $\frac{rx_\e}{|x_\e|}$ such that
$u_\e(y_\e)=s_2$ and $|y_\e|\ge|x_\e|$ by the intermediate value theorem.
Let $\hat{u}_\e(x)=u_\e(\e x+y_\e)$ on $B_{\frac{|x_\e|}{2\e}}(0)$.
We note that $0\notin B_{\frac{|x_\e|}{2}}(y_\e)$. Then $\hat{u}_\e$ satisfies
\begin{equation}
\begin{aligned}
\left\{
 \begin{array}{ll}
 \Delta \hat{u}_\e+\frac{e^{\hat{u}_\e}(1-e^{\hat{u}_\e})}{(\tau+e^{\hat{u}_\e})^3}=0\ \textrm{in}\ B_{\frac{|x_\e|}{2\e}}(0),
 \\ \hat{u}_\e(0)=s_2,
  \\ \int_{B_{\frac{|x_\e|}{2\e}}(0)}\Big|\frac{e^{\hat{u}_\e}(1-e^{\hat{u}_\e})}{(\tau+e^{\hat{u}_\e})^3}\Big| dx\le M_0.
 \end{array}\right.
\end{aligned}
\end{equation}
By using the fact that $\hat{u}_\e\le0$ and $\hat{u}_\e(0)=s_2$ together with Lemma \ref{harnarkineq}, then we see that
for large $R>0$ there exist $\sigma\in(0,1)$ and $\gamma>0$, independent of $\e>0$, such that
$$s_2=\hat{u}_\e(0)\le\sup_{B_{R/2}(0)} \hat{u}_\e\le\sigma\inf_{B_{R/2}(0)}\hat{u}_\e+(1+\sigma)\gamma\|\Delta \hat{u}_\e\|_{L^p(B_R(0))},$$
and  $\hat{u}_\e$ is bounded in $C^0_{\textrm{loc}}(B_{\frac{|x_\e|}{2\e}}(0))$.
Then $\hat{u}_\e$ converges in $C^2_{\textrm{loc}}(\RN)$ to a function $u_*$ satisfying
\begin{equation}
\begin{aligned}
\left\{
 \begin{array}{ll}
 \Delta u_*+\frac{e^{u_*}(1-e^{u_*})}{(\tau+e^{u_*})^3}=0\ \textrm{in}\ \RN,
 \\u_*(0)=s_2,\ \ u_*\le0,
 \\ \intr\Big|\frac{e^{u_*}(1-e^{u_*})}{(\tau+e^{u_*})^3}\Big| dx\le M_0.
 \end{array}\right.
\end{aligned}
\end{equation}
By using Lemma \ref{radiallysymm}, we see that  $u_*$ is radially symmetric about some point.
Then, we see that  $ \Big|\intr\frac{e^{u_*}(1-e^{u_*})}{(\tau+e^{u_*})^3}\Big| dx\ge 2\pi\beta(s_2)\ge 2M_0$ from
Lemma \ref{propertyofentiresolution} which is once more a contradiction.

At this point, by using the above results, we see that
$\lim_{\e\to0}\Big(\sup_{\cup_{j=1}^{d_1}B_r(p_{j,1})}u_\e\Big)<-c$ for some constant $c>0$, which shows that
(b) in Theorem \ref{BrezisMerletypealternatives} holds whenever (b) in Proposition \ref{alternatives} holds.\\
The proof of (c) in Theorem \ref{BrezisMerletypealternatives} follows essentially by the same argument and we skip it
here to avoid repetitions.  \hspace{\fill}$\square$

\section{Proof of Theorem \ref{stable}: stable solution $\Rightarrow$ topological solution}\label{thmst}
In this section, we will prove one of the implications in the statement of
Theorem \ref{stable}, that is,  stable solution $\Rightarrow$ topological solution  whenever (H1-2) hold.
Let $u_\e$ be a sequence of stable solutions of (\ref{maineq}).
To prove Theorem \ref{stable}, we argue by contradiction and suppose that $u_\e$ does not converge to
$0$ almost everywhere.
Then either $(b)$ or $(c)$ of  Theorem \ref{BrezisMerletypealternatives} would occur.
Since $f_\tau(u)=-\frac{f_{\tau^{-1}}(-u)}{\tau^3}$, without loss of generality
we can assume that $u_\e$ has the profile $(b)$ of Theorem
\ref{BrezisMerletypealternatives}.

If $u_\e-2\ln\e$ has a
bubble at some point in $\Omega\setminus Z_2$, then there are two possibilities. One is that
the limiting equation is the mean field equation and it is easy to see the solution is not stable. Another one is that
the limiting equation is (\ref{maineq}), but defined in the whole $\RN$, and after a suitable scaling,
$u_\e$ tends to a nontopological solution $u$ such that $\lim_{|x|\to\infty}u(x)=-\infty$.
Again, this is also unstable. The proof is not difficult. But for the sake of completeness, we put the proof in the Appendix. To the best of our knowledge,
even for CSH (\ref{chseq}), this result has not been written in the literature.

 Therefore, from now on,  we may assume
that for any small $r>0$,  there exists $c_r>0$ such that
\begin{equation}\label{oriassm}
w_\e\equiv u_\e-2\ln\e<c_r\ \ \textrm{on}\ \ \Omega\setminus\cup_{j}(B_r(p_{j,2})).
\end{equation}

Now we consider
\begin{equation}
\begin{aligned}\label{nextforeigenvalue}
&\mu_\e\equiv\inf_{\phi\in W^{1,2}(\Omega)\setminus\{0\}}\frac{\int_{\Omega}|\nabla\phi|^2-\frac{1}{\e^2}f'_\tau(u_\e)\phi^2dx}{\|\phi\|^2_{L^2(\Omega)}}
\\&\le\frac{1}{|\Omega|^2}\int_\Omega -\frac{1}{\e^2}f'_\tau(u_\e)dx
=\frac{1}{|\Omega|^2}\int_\Omega \frac{e^{u_\e}(-\tau+2(\tau+1)e^{u_\e}-e^{2u_\e})}{\e^2(\tau+e^{u_\e})^4}dx.
\end{aligned}
\end{equation}
To derive a contradiction, we want to prove that for small $\e>0$,
\begin{equation}\label{eigen}
\begin{aligned}\int_\Omega \frac{e^{u_\e}(-\tau+2(\tau+1)e^{u_\e}-e^{2u_\e})}{\e^2(\tau+e^{u_\e})^4}dx
<0.\end{aligned}
\end{equation}
To prove (\ref{eigen}), we need to compute the integral over a small neighborhood of each $p_{j,2}\in Z_2$.
Let us first show a simple fact about $w_\e$.

\begin{lemma}\label{twocases}$w_\e$ satisfies
\begin{equation}\label{nablabdd1}
\textrm{either}\ \ \lim_{\e\to0}\|w_\e-u_0\|_{L^\infty(\Omega\setminus\cup_{j}(B_r(p_{j,2})))}<
\infty\ \ \textrm{or}\ \ \lim_{\e\to0}\Big(\sup_{\Omega\setminus\cup_{j}(B_r(p_{j,2}))}w_\e\Big)=-\infty.\end{equation}
Moreover, for any small $r>0$, there exists $C_r>0$ such that
\begin{equation}\label{nablabdd}
\sup_{\Omega\setminus\cup_{j}(B_r(p_{j,2}))}|\nabla (w_\e-u_0)|\le C_r.
\end{equation}
\end{lemma}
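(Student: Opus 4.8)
The plan is to pass to the rescaled quantity $w_\e$ and then to study $h_\e:=w_\e-u_0$, which will solve a clean elliptic equation with the Dirac masses removed. Since $\eu=\e^2 e^{w_\e}$, substituting into (\ref{maineq}) shows that $w_\e$ satisfies $\Delta w_\e+g_\e=4\pi\sum_j m_{j,1}\delta_{p_{j,1}}-4\pi\sum_j m_{j,2}\delta_{p_{j,2}}$ on $\Omega$, where $g_\e:=\frace f_\tau(u_\e)=\frac{e^{w_\e}(1-\e^2 e^{w_\e})}{(\tau+\e^2 e^{w_\e})^3}$. Subtracting the equation satisfied by $u_0$, the Dirac masses cancel and I get $-\Delta h_\e=g_\e-\frac{4\pi(N_1-N_2)}{|\Omega|}$ on all of $\Omega$, with $h_\e$ continuous (indeed $C^1$) across every $p_{j,1}$, because there the logarithmic singularities of $w_\e$ and $u_0$ coincide. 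Two a priori bounds on $g_\e$ will drive everything: first, Lemma \ref{bddofintegration} gives $\|g_\e\|_{L^1(\Omega)}\le M_0$ uniformly in $\e$; second, on $\Omega\setminus\cup_j B_{r/2}(p_{j,2})$ the hypothesis (\ref{oriassm}) forces $\e^2 e^{w_\e}<1$ for small $\e$, whence $0\le g_\e\le e^{w_\e}/\tau^3\le e^{c_{r/2}}/\tau^3$, so $g_\e$ is uniformly bounded there.

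The core estimate comes from the Green's representation on the torus. Using $-\Delta_x G(x,y)=\delta_y-1/|\Omega|$ together with $\int_\Omega G(x,y)\,dy=0$ (by symmetry of $G$), I write $h_\e(x)-\bar h_\e=\int_\Omega G(x,y)g_\e(y)\,dy$, where $\bar h_\e:=\frac{1}{|\Omega|}\int_\Omega h_\e\,dx$ (the constant term integrates to zero against $G$), and correspondingly $\nabla h_\e(x)=\int_\Omega \nabla_x G(x,y)g_\e(y)\,dy$. For $x\in\Omega\setminus\cup_j B_r(p_{j,2})$ I split each integral into the part over $\cup_j B_{r/2}(p_{j,2})$ and the part over its complement. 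On the first part $|x-y|\ge r/2$, so $|G(x,y)|$ and $|\nabla_x G(x,y)|\le C/|x-y|$ are bounded by a constant depending only on $r$, and the contribution is controlled by $\|g_\e\|_{L^1(\Omega)}\le M_0$. On the second part $g_\e$ is uniformly bounded, while the kernels $|G(x,y)|$ and $|\nabla_x G(x,y)|$ are integrable in $y$ (logarithmic and $|x-y|^{-1}$ singularities). Hence both $h_\e(x)-\bar h_\e$ and $\nabla h_\e(x)$ are bounded by a constant depending only on $r$, uniformly in $\e$, on $\Omega\setminus\cup_j B_r(p_{j,2})$. This is precisely the gradient bound (\ref{nablabdd}), and it also shows that the oscillation of $h_\e$ on $\Omega\setminus\cup_j B_r(p_{j,2})$ is uniformly bounded.

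To finish the dichotomy (\ref{nablabdd1}) it remains to track the additive constant $\bar h_\e$. Evaluating at a fixed point $x_*\in\Omega\setminus\cup_j B_r(p_{j,2})$ away from all vortices, $\bar h_\e=w_\e(x_*)-u_0(x_*)-(h_\e(x_*)-\bar h_\e)$ is bounded above, since $w_\e(x_*)<c_r$ by (\ref{oriassm}) and the last term is bounded. Thus, up to a subsequence, $\bar h_\e$ either converges to a finite limit or tends to $-\infty$. In the first case $h_\e=\bar h_\e+(h_\e-\bar h_\e)$ is uniformly bounded on $\Omega\setminus\cup_j B_r(p_{j,2})$, which is the first alternative in (\ref{nablabdd1}). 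In the second case $h_\e\to-\infty$ uniformly there; since $u_0$ is bounded above on $\Omega\setminus\cup_j B_r(p_{j,2})$ (it tends to $-\infty$ only at the $p_{j,1}$), we conclude $\sup_{\Omega\setminus\cup_j B_r(p_{j,2})}w_\e=\sup(h_\e+u_0)\to-\infty$, the second alternative.

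I expect the only genuinely delicate point to be the control of the contribution of the region near $Z_2$, where $g_\e$ may be as large as $\e^{-2}$ because $w_\e$ is unbounded there. The representation formula sidesteps this difficulty: that region is seen from $x\in\Omega\setminus\cup_j B_r(p_{j,2})$ through a bounded kernel, so its entire contribution is absorbed by the uniform $L^1$ bound of Lemma \ref{bddofintegration}, which is exactly why that bound is the crucial preliminary. (The dichotomy part alone could instead be extracted from the Harnack inequality Lemma \ref{harnarkineq}(a), using $w_\e<c_r$ to bound $h_\e$ from above on $\cup_j\partial B_{r/2}(p_{j,2})$; but the representation formula appears to be the cleanest route to the gradient bound valid uniformly in both alternatives.)
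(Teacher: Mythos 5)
Your proof is correct and follows essentially the same route as the paper: both derive the equation for $w_\e-u_0$ with the Dirac masses cancelled, and both obtain the gradient bound (\ref{nablabdd}) from the Green's representation formula by splitting the integral into a region where the nonlinearity is pointwise bounded via (\ref{oriassm}) and a region seen through a bounded kernel, controlled by the uniform $L^1$ bound of Lemma \ref{bddofintegration}. The only (cosmetic) difference is in the dichotomy (\ref{nablabdd1}): the paper invokes the Harnack inequality of Lemma \ref{harnarkineq}, whereas you extract a uniform oscillation bound from the same representation formula and then track the mean value $\bar h_\e$ — which has the small added benefit of making it transparent that the choice of alternative is independent of $r$.
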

\begin{proof}
We note that $w_\e$ satisfies the following equation
\begin{equation}\label{eqforwe}
\Delta w_\e+\frac{e^{w_\e}(1-\e^2e^{w_\e})}{(\tau+\e^2e^{w_\e})^3}=
4\pi\sum^{d_{1}}_{j=1}m_{j,1}\delta_{p_{j,1}}-4\pi\sum^{d_{2}}_{j=1}m_{j,2}\delta_{p_{j,2}}
\ \textrm{on}\ \Omega.
\end{equation}
We also see that
$$\Delta (w_\e-u_0)+\frac{e^{w_\e}(1-\e^2e^{w_\e})}{(\tau+\e^2e^{w_\e})^3}=\frac{4\pi(N_1-N_2)}{|\Omega|}
\ \textrm{on}\ \Omega.$$
By using (\ref{oriassm}) and Lemma \ref{harnarkineq}, we readily obtain (\ref{nablabdd1}).

Next, by using the Green's representation formula for a solution $w_\e$ of (\ref{eqforwe}), we see that for $x\in \Omega$,
\begin{equation}
\begin{aligned}
w_\e(x)-u_0(x)
=\frac{1}{|\Omega|}\int_\Omega w_\e(y)dy+\int_\Omega G(x,y) \frac{e^{w_\e}(1-\e^2e^{w_\e})}{(\tau+e^{w_\e})^3}dy.
\end{aligned}
\end{equation}
By using Lemma \ref{bddofintegration}, we conclude that there exists a constant $C>0$,
independent of $\e>0$ and $r>0$, such that for $x\in\Omega\setminus\cup_{j}(B_{r}(p_{j,2}))$, it holds
\begin{equation}
\begin{aligned}
|\nabla (w_\e(x)-u_0(x))|
&\le \frac{1}{2\pi}\int_\Omega\frac{1}{|x-y|}\Big|\frac{e^{w_\e}(1-\e^2e^{w_\e})}{(\tau+\e^2e^{w_\e})^3}\Big|dy+C
\\&\le \frac{1}{2\pi}\Big\{\sup_{\Omega\setminus\cup_{j}(B_{\frac{r}{2}}(p_{j,2}))}\Big|\frac{e^{w_\e}(1-\e^2e^{w_\e})}{(\tau+\e^2e^{w_\e})^3}\Big|\int_{B_{\frac{r}{2}}(x)}\frac{1}{|x-y|}dy
\\&+\frac{2}{r}\int_{\Omega\setminus\ B_{\frac{r}{2}}(x)}\Big|\frac{e^{w_\e}(1-\e^2e^{w_\e})}{(\tau+\e^2e^{w_\e})^3}\Big|dy\Big\}+C.
\end{aligned}
\end{equation}
By using (\ref{oriassm}) and Lemma \ref{bddofintegration}, we obtain (\ref{nablabdd}) which concludes
the proof of our lemma.
\end{proof}

If $\lim_{\e\to0}\|w_\e-u_0\|_{L^\infty(\Omega\setminus\cup_{j}(B_r(p_{j,2})))}<\infty\ \ \textrm{for any small }\ \ r>0$, then there exists a function $w$ satisfying
\begin{equation*}
w_\e\to w\ \ \textrm{in}\ \ C^2_{\textrm{loc}}(\Omega\setminus Z_2).
\end{equation*}
By using Lemma \ref{bddofintegration}, we also see that $w$ satisfies
\begin{equation}\label{betajbddminusone}
\Delta w+\frac{e^w}{\tau^3}=4\pi\sum^{d_{1}}_{j=1}m_{j,1}\delta_{p_{j,1}}+4\pi\sum^{d_{2}}_{j=1}\beta_{j,2}\delta_{p_{j,2}}
\ \textrm{on}\ \Omega\ \ \textrm{where }\ \ \beta_{j,2}>-1.
\end{equation}

If $\lim_{\e\to0}\Big(\sup_{\Omega\setminus\cup_{j}(B_r(p_{j,2}))}w_\e\Big)=-\infty$,
then for fixed  $x_0\in\Omega\setminus Z$, and by using (\ref{nablabdd}), we see that
there exists a function $g$ satisfying
\begin{equation*}
g_\e\equiv w_\e-w_\e(x_0)\to g\ \ \textrm{in}\ \ C^2_{\textrm{loc}}(\Omega\setminus Z_2),
\end{equation*} and
\begin{equation}\label{inftyftn}\Delta g=4\pi\sum^{d_{1}}_{j=1}m_{j,1}\delta_{p_{j,1}}+4\pi\sum^{d_{2}}_{j=1}\beta_{j,2}\delta_{p_{j,2}}
\ \textrm{on}\ \Omega \ \ \textrm{where }\ \ \beta_{j,2}\in\R.
\end{equation}
Clearly (\ref{inftyftn}) implies $N_1+\sum_{j=1}^{d_2}\beta_{j,2}=0$.

Next we have the following property.
\begin{lemma}\label{intfandcapf}For any $1\le j\le d_2$,
\begin{equation}\label{poeqforftwotau1}
\lim_{r\to0}\lim_{\e\to0}\int_{B_r(p_{j,2})}\frac{f_\tau(u_\e)}{\e^2}dx
=-4\pi(m_{j,2}+\beta_{j,2}),
\end{equation}
and
\begin{equation}\begin{aligned}\label{poeqforftwotau}
&\lim_{r\to0}\lim_{\e\to0}\int_{B_r(p_{j,2})}\frac{F_{2,\tau}(u_\e)}{\e^2}dx
=2\pi(\beta_{j,2}^2- m_{j,2}^2),
\end{aligned}
\end{equation}
where $F_{2,\tau}(u)\equiv\frac{e^u((1-\tau)e^u+2\tau)}{2\tau^2(\tau+e^u)^2}$.
\end{lemma}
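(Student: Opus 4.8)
The plan is to read off both identities from the local structure of $u_\e$ at the vortex $p_{j,2}$ together with the convergence of $w_\e$ already established. Fix $j$, assume for simplicity $p_{j,2}=0$, and take $r$ small enough that $\overline{B_r(0)}$ contains no other vortex. The distributional equation (\ref{maineq}) forces the decomposition $u_\e(x)=-2m_{j,2}\ln|x|+v_\e(x)$ with $v_\e\in C^2(B_r(0))$ solving $\Delta v_\e+\frace f_\tau(u_\e)=0$ classically. Whether we are in the regime $w_\e\to w$ (so that (\ref{betajbddminusone}) holds) or in the regime $\sup w_\e\to-\infty$ (so that $g_\e=w_\e-w_\e(x_0)\to g$ and (\ref{inftyftn}) holds), the limit is harmonic away from its source and behaves like $2\beta_{j,2}\ln|x|$ near $0$; since only the gradients $\nabla u_\e=\nabla w_\e=\nabla g_\e$ and the associated fluxes enter, the two regimes can be treated simultaneously. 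I will also use that $F_{2,\tau}'=f_\tau$, which is what makes $F_{2,\tau}$ the antiderivative adapted to a Pohozaev argument.

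For (\ref{poeqforftwotau1}) I would simply integrate the equation. Since $v_\e$ is smooth and solves $\Delta v_\e=-\frace f_\tau(u_\e)$ in $B_r(0)$, the divergence theorem gives $\int_{B_r}\frace f_\tau(u_\e)\,dx=-\int_{\partial B_r}\partial_\nu v_\e\,dS$. Writing $v_\e=w_\e+2\ln\e+2m_{j,2}\ln|x|$ turns the right-hand side into $-\int_{\partial B_r}\partial_\nu w_\e\,dS-4\pi m_{j,2}$. Letting $\e\to0$ replaces $\partial_\nu w_\e$ by $\partial_\nu w$ (resp. $\partial_\nu g$) on $\partial B_r$ by the $C^2_{\mathrm{loc}}$ convergence, and then letting $r\to0$ the flux $\int_{\partial B_r}\partial_\nu w\,dS$ tends to $4\pi\beta_{j,2}$ because of the singularity $2\beta_{j,2}\ln|x|$. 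This produces exactly $-4\pi(m_{j,2}+\beta_{j,2})$.

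The identity (\ref{poeqforftwotau}) is the subtler one and I would prove it with the Pohozaev identity obtained by multiplying $\Delta u_\e+\frace f_\tau(u_\e)=0$ by $x\cdot\nabla u_\e$ and integrating over the annulus $B_r(0)\setminus B_\rho(0)$. Using $F_{2,\tau}'=f_\tau$ and the two-dimensional identities $\int(x\cdot\nabla u)\Delta u=\int_{\partial}[(x\cdot\nabla u)\partial_\nu u-\tfrac12(x\cdot\nu)|\nabla u|^2]$ and $\int x\cdot\nabla F_{2,\tau}(u)=\int_{\partial}(x\cdot\nu)F_{2,\tau}(u)-2\int F_{2,\tau}(u)$, one obtains
\begin{equation*}
\frac{2}{\e^2}\int_{B_r\setminus B_\rho}F_{2,\tau}(u_\e)\,dx=\int_{\partial(B_r\setminus B_\rho)}\Big\{(x\cdot\nabla u_\e)\partial_\nu u_\e-\tfrac12(x\cdot\nu)|\nabla u_\e|^2+\frace(x\cdot\nu)F_{2,\tau}(u_\e)\Big\}\,dS.
\end{equation*}
I would then pass to the limit in three stages matching the order $\lim_{r\to0}\lim_{\e\to0}$. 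First, for fixed $\e$ and $r$, let $\rho\to0$: using $u_\e\sim-2m_{j,2}\ln|x|$ the inner circle contributes $-8\pi m_{j,2}^2$ from the first term and $+4\pi m_{j,2}^2$ from the second, while the nonlinear boundary term is $O(\rho^2/\e^2)\to0$ (here $F_{2,\tau}(u_\e)$ stays bounded as $x\to0$ since $u_\e\to+\infty$), so the net inner contribution is the $\e$-independent constant $-4\pi m_{j,2}^2$. Next let $\e\to0$: on $\partial B_r$ one has $\nabla u_\e=\nabla w_\e\to\nabla w$ (resp. $\nabla g$), and since $e^{u_\e}=\e^2 e^{w_\e}$ the term $\frace F_{2,\tau}(u_\e)\to\tau^{-3}e^{w}$ remains bounded, so that boundary contribution converges to $\int_{\partial B_r}\tau^{-3}re^{w}\,dS$. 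Finally let $r\to0$: inserting $w\sim2\beta_{j,2}\ln|x|$ into the surviving outer terms $\tfrac{r}{2}(\partial_r w)^2-\tfrac{1}{2r}(\partial_\theta w)^2$ gives $4\pi\beta_{j,2}^2$, while $\int_{\partial B_r}\tau^{-3}re^{w}\,dS=O(r^{2+2\beta_{j,2}})\to0$ because $\beta_{j,2}>-1$. Collecting terms yields $\tfrac{2}{\e^2}\int_{B_r}F_{2,\tau}(u_\e)\to4\pi(\beta_{j,2}^2-m_{j,2}^2)$, which is (\ref{poeqforftwotau}) after dividing by two.

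The main obstacle is the bookkeeping of the boundary contributions in the correct order of limits. The crucial points are that the inner-boundary limit $\rho\to0$ must yield a constant independent of $\e$, so that the vortex strength $-4\pi m_{j,2}^2$ survives intact; this relies on the exactness of the logarithmic singularity and on $F_{2,\tau}$ staying bounded as $u_\e\to+\infty$. Equally delicate is the genuine vanishing of the nonlinear boundary terms in the outer limit, for which the constraint $\beta_{j,2}>-1$ coming from (\ref{betajbddminusone}) is precisely what keeps $\int_{\partial B_r}\tau^{-3}re^{w}\,dS$ of order $o(1)$. Finally, one must check that the two regimes $w_\e\to w$ and $\sup w_\e\to-\infty$ lead to the same conclusion; this is automatic since only gradients and fluxes of $w_\e=g_\e+\mathrm{const}$ enter, and in the second regime the nonlinear boundary terms are even smaller because $e^{w_\e}\to0$.
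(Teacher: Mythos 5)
Your proof is correct and follows essentially the same route as the paper: integrate the equation to get (\ref{poeqforftwotau1}), and for (\ref{poeqforftwotau}) apply the Pohozaev identity with multiplier $x\cdot\nabla u_\e$ and antiderivative $F_{2,\tau}$, extracting $-4\pi m_{j,2}^2$ from the vortex singularity and $4\pi\beta_{j,2}^2$ from the limit $w$ (resp.\ $g$); the paper merely organizes the same computation over the full ball via the regularized function $v_\e=u_\e+2m_{j,2}\ln|x|$ rather than your annulus with an explicit $\rho\to0$ limit. The only step you leave informal is the final $r\to0$ limit of the outer boundary terms: in Case 1 the remainder $v=w-2\beta_{j,2}\ln|x|$ solves $\Delta v=-e^{w}/\tau^{3}$ with $e^{w}\sim|x|^{2\beta_{j,2}}$, so $\nabla v$ need not be bounded at the origin when $\beta_{j,2}$ is close to $-1$, and the vanishing of the cross terms should be justified --- either by the estimate $\partial_r v=O(r^{2\beta_{j,2}+1})$ or, as the paper does, by the auxiliary Pohozaev identity (\ref{pov}) for the limit equation, whose right-hand side $\int_{B_r(0)}(2+2\beta_{j,2})e^{w}/\tau^{3}\,dx$ tends to $0$ by integrability of $e^{w}$.
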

\begin{proof}
For the sake of simplicity, we assume that $p_{j,2}=0$. We consider the following two cases.

\textit{Case 1.} $w_\e\to w\ \ \textrm{in}\ \ C^2_{\textrm{loc}}(\Omega\setminus Z_2).$\\
We integrate  (\ref{eqforwe}) on $B_r(0)$ and take the limit as $\e\to0$ to conclude that
\begin{equation*}
\begin{aligned}
\lim_{\e\to0}\int_{B_r(0)}\frac{f_\tau(u_\e)}{\e^2}dx&=-\lim_{\e\to0}\Big(4\pi m_{j,2}+\int_{\partial B_r(0)}\frac{\partial w_\e}{\partial\nu}d\sigma\Big)
=-\Big(4\pi m_{j,2}+\int_{\partial B_r(0)}\frac{\partial w}{\partial\nu}d\sigma\Big)
\\&=-4\pi(m_{j,2}+\beta_{j,2})+\int_{B_r(0)}\frac{e^w}{\tau^3}dx.
\end{aligned}
\end{equation*}
Clearly Lemma \ref{bddofintegration} implies that
\begin{equation*}
\lim_{r\to0}\lim_{\e\to0}\int_{B_r(0)}\frac{f_\tau(u_\e)}{\e^2}dx
=-4\pi(m_{j,2}+\beta_{j,2}).
\end{equation*}

At this point we  consider the function $v\equiv w-2\beta_{j,2}\ln|x|$ which satisfies
\begin{equation}\label{eqforv}
\Delta v+\frac{e^w}{\tau^3}=0
\ \textrm{on}\ B_r(0).
\end{equation}
Multiplying  (\ref{eqforv}) by $\nabla w\cdot x$ and integrating over $B_r(0)$,  we conclude that
\begin{equation}\label{pov}
\begin{aligned}
&\int_{\partial B_r(0)}\Big[\Big(\nabla v\cdot\frac{x}{|x|}\Big)(\nabla v\cdot x)-\frac{|\nabla v|^2|x|}{2}+\frac{e^{w}|x|}{\tau^3}\Big]d\sigma
=\int_{B_r(0)}\frac{(2+2\beta_{j,2})e^{w}}{\tau^3}dx.
\end{aligned}
\end{equation}
Let us also consider the function $v_\e(x)\equiv u_\e(x)+2m_{j,2}\ln|x|$ which  satisfies
\begin{equation}\label{eqforve}
\Delta v_\e+\frac{f_\tau(u_\e)}{\e^2}=0
\ \textrm{on}\ B_r(0).
\end{equation}
Multiplying  (\ref{eqforve}) by $\nabla u_\e\cdot x$ and integrating over $B_r(0)$, we have
\begin{equation}
\begin{aligned}\label{fornextpo2}
&\int_{B_r(0)}\frac{2F_{2,\tau}(u_\e)}{\e^2}dx
\\&=\int_{\partial B_r(0)}\Big[\Big(\nabla v_\e\cdot\frac{x}{|x|}\Big)(\nabla v_\e\cdot x)-\frac{|\nabla v_\e|^2|x|}{2}+\frac{F_{2,\tau}(u_\e)|x|}{\e^2}-2m_{j,2}\frac{\nabla v_\e\cdot x}{|x|}\Big]d\sigma.
\end{aligned}
\end{equation}
Hence, as $\e\to0$, we have
\begin{equation*}
\begin{aligned}
&\lim_{\e\to0}\int_{B_r(0)}\frac{2F_{2,\tau}(u_\e)}{\e^2}dx
\\&=\int_{\partial B_r(0)}\Big[\frac{(\nabla v\cdot x+2(m_{j,2}+\beta_{j,2}))^2}{|x|}-\Big|\nabla v+\frac{2(m_{j,2}+\beta_{j,2})x}{|x|^2}\Big|^2\frac{|x|}{2}
\\&+\frac{e^{w}|x|}{\tau^3}
-\frac{2m_{j,2}\{\nabla v\cdot x+2(m_{j,2}+\beta_{j,2})\}}{|x|}\Big]d\sigma.
\end{aligned}
\end{equation*}
By using (\ref{pov}), we also see that
\begin{equation*}\begin{aligned}
&\lim_{r\to0}\lim_{\e\to0}\int_{B_r(0)}\frac{F_{2,\tau}(u_\e)}{\e^2}dx
=2\pi(\beta_{j,2}^2- m_{j,2}^2),
\end{aligned}
\end{equation*}
which is (\ref{poeqforftwotau}).

\

\textit{Case 2.} $g_\e=w_\e-w_\e(x_0)\to g\ \ \textrm{in}\ \ C^2_{\textrm{loc}}(\Omega\setminus Z_2).$\\
We integrate  (\ref{eqforwe}) on $B_r(0)$ and take the limit as $\e\to0$ to conclude that
\begin{equation*}
\begin{aligned}
\lim_{\e\to0}\int_{B_r(0)}\frac{f_\tau(u_\e)}{\e^2}dx&=-\lim_{\e\to0}\Big(4\pi m_{j,2}+\int_{\partial B_r(0)}\frac{\partial g_\e}{\partial\nu}d\sigma\Big)
\\&=-\Big(4\pi m_{j,2}+\int_{\partial B_r(0)}\frac{\partial g}{\partial\nu}d\sigma\Big)
=-4\pi(m_{j,2}+\beta_{j,2}).
\end{aligned}
\end{equation*}

Let us consider the function $h\equiv g-2\beta_{j,2}\ln|x|$. Then $h$ satisfies
\begin{equation}\label{pov2}
\Delta h=0
\ \textrm{on}\ B_r(0).
\end{equation}
Next we also define $h_\e(x)\equiv g_\e(x)+2m_{j,2}\ln|x|$ which satisfies
\begin{equation}\label{eqforvehe}
\Delta h_\e+\frac{f_\tau(u_\e)}{\e^2}=0
\ \textrm{on}\ B_r(0).
\end{equation}
Multiplying  (\ref{eqforvehe}) by $\nabla u_\e\cdot x$ and integrating over $B_r(0)$,  we see that
\begin{equation*}
\begin{aligned}
&\lim_{\e\to0}\int_{B_r(0)}\frac{2F_{2,\tau}(u_\e)}{\e^2}dx
\\&=\int_{\partial B_r(0)}\Big[\frac{(\nabla h\cdot x+2(m_{j,2}+\beta_{j,2}))^2}{|x|}-\Big|\nabla h+\frac{2(m_{j,2}+\beta_{j,2})x}{|x|^2}\Big|^2\frac{|x|}{2}
\\&-\frac{2m_{j,2}\{\nabla h\cdot x+2(m_{j,2}+\beta_{j,2})\}}{|x|}\Big]d\sigma.
\end{aligned}
\end{equation*}
By using (\ref{pov2}), we also conclude that
\begin{equation*}\begin{aligned}
&\lim_{\e\to0}\int_{B_r(0)}\frac{F_{2,\tau}(u_\e)}{\e^2}dx
=2\pi(\beta_{j,2}^2- m_{j,2}^2),
\end{aligned}
\end{equation*}
which is (\ref{poeqforftwotau1}).
\end{proof}

Let $\hat{u}_\e(x)=u_\e(\e x)$ which satisfies
$$\Delta\hat{u}_\e+f_\tau(\hat{u}_\e)=-4\pi m_{j,2}\delta_{p_{j,2}}\ \ \textrm{on}  \ \ B_{\frac{r}{\e}}(p_{j,2}).$$
Moreover we have:
\begin{lemma} There exists a constant $c>0$, independent of $r>0$ and $\e>0$, such that  \begin{equation}\label{gradgrad}
\Big|\nabla \hat{u}_\e(x)+\frac{2m_{j,2}(x-p_{j,2})}{|x-p_{j,2}|^2}\Big|\le c \ \ \textrm{on}\ \ B_{\frac{r}{\e}}(p_{j,2}).
\end{equation}
\end{lemma}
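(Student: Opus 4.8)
The plan is to reduce the claim to a uniform gradient bound, in the original variables, for the regular part of $u_\e$ near the vortex, and then to establish that bound by a potential estimate that interpolates the $L^\infty$ and $L^1$ controls on the nonlinearity. Assume for simplicity $p_{j,2}=0$, as in the proof of Lemma \ref{intfandcapf}, and set $v_\e(y)\equiv u_\e(y)+2m_{j,2}\ln|y|$. Since $\hat u_\e(x)=u_\e(\e x)$, a direct computation with $y=\e x$ gives
\[
\nabla\hat u_\e(x)+\frac{2m_{j,2}x}{|x|^2}=\e\,\nabla v_\e(y),
\]
and $x\in B_{r/\e}(0)$ corresponds to $y\in B_r(0)$. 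Hence (\ref{gradgrad}) is equivalent to the estimate $|\nabla v_\e(y)|\le c/\e$ for $y\in B_r(0)$, with $c$ independent of $r$ and $\e$, and this is what I would prove.

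First I would write the Green representation formula for $u_\e$ on the torus (as in the proof of Lemma \ref{green}),
\[
u_\e(y)=\frac{1}{|\Omega|}\int_\Omega u_\e\,dz + u_0(y) + \frac{1}{\e^2}\int_\Omega G(y,z)f_\tau(u_\e(z))\,dz.
\]
By the given singular structure of $u_\e$ at $p_{j,2}$ (equivalently, the $-4\pi m_{j,2}\delta_0$ term in the equation for $\hat u_\e$), the function $u_0(y)+2m_{j,2}\ln|y|=\psi(y)$ is smooth near $0$, the remaining vortex contributions being harmless there. Consequently
\[
v_\e(y)=\mathrm{const}+\psi(y)+\frac{1}{\e^2}\int_\Omega G(y,z)f_\tau(u_\e(z))\,dz.
\]
Differentiating and splitting $\nabla_yG(y,z)=-\frac{1}{2\pi}\frac{y-z}{|y-z|^2}+\nabla_y\gamma(y,z)$, the contributions of $\nabla\psi$ and of the smooth kernel $\nabla_y\gamma$ are bounded by a constant (independent of $r,\e$), using the boundedness of $\nabla\psi$ and of $\nabla_y\gamma$ together with the control $\frac{1}{\e^2}\int_\Omega|f_\tau(u_\e)|\,dz\le M_0$ from Lemma \ref{bddofintegration}.

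It remains to bound the singular term $\frac{1}{2\pi\e^2}\int_\Omega\frac{|f_\tau(u_\e(z))|}{|y-z|}\,dz$. Here I would use \emph{both} the uniform bound $\|f_\tau\|_{L^\infty(\R)}\le C_0$ (so that $\frac{1}{\e^2}|f_\tau(u_\e)|\le C_0/\e^2$) and the integral bound $M_0$. Splitting the integral at radius $\rho$ and using $\int_{B_\rho(y)}|y-z|^{-1}dz=2\pi\rho$ gives
\[
\frac{1}{\e^2}\int_\Omega\frac{|f_\tau(u_\e(z))|}{|y-z|}\,dz\le \frac{2\pi C_0\rho}{\e^2}+\frac{M_0}{\rho},
\]
and optimizing in $\rho$ (the minimizer is of order $\e$) yields the bound $\frac{2\sqrt{2\pi C_0M_0}}{\e}$, uniformly in $y\in\Omega$. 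Collecting the three contributions and absorbing the bounded terms into $C/\e$ (using $\e\le 1$) gives $|\nabla v_\e(y)|\le C/\e$ with $C=C(C_0,M_0)$ independent of $r$ and $\e$, which is the required estimate.

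The main obstacle is precisely this last step: the naive $L^\infty$ bound alone produces a factor $\e^{-2}$ (too large), while the $L^1$ bound alone cannot absorb the local singularity of the Newtonian kernel. The point is to interpolate the two, the optimization over $\rho$ converting the pair (bound $\sim\e^{-2}$ in $L^\infty$, bound $\sim 1$ in $L^1$) into the sharp rate $\e^{-1}$, which is exactly the scaling needed so that the prefactor $\e$ coming from $\hat u_\e(x)=u_\e(\e x)$ produces the $O(1)$ constant asserted in (\ref{gradgrad}).
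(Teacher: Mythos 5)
Your proof is correct and follows essentially the same route as the paper: Green's representation for $u_\e$, absorbing the vortex singularity at $p_{j,2}$ into the kernel, and bounding the Riesz potential of $\frac{1}{\e^2}f_\tau(u_\e)$ by splitting at scale $\sim\e$, using the $L^\infty$ bound on $f_\tau$ near the singularity and the $L^1$ bound of Lemma \ref{bddofintegration} away from it, to get $O(1/\e)$ before rescaling. The only cosmetic difference is that you optimize the splitting radius $\rho$ whereas the paper simply takes $\rho=\e$, which yields the same $O(1/\e)$ rate.
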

\begin{proof} For the sake of simplicity, we assume that $p_{j,2}=0$.
  By using the Green's representation formula for a solution $u_\e$ of (\ref{maineq}) (see (\ref{greenrp})) and Lemma \ref{bddofintegration}, we  see that for
 $x\in B_r(0)$,
 \begin{equation*}
\begin{aligned}
\Big|\nabla u_\e(x)+\frac{2m_{j,2}x}{|x|^2}\Big|&\le C+\frac{1}{2\pi\e^2}\int_\Omega \frac{|f_\tau(u_\e)|}{|x-y|}dy
\\&=C+\frac{1}{2\pi\e^2}\Big(\int_{B_\e(x)} \frac{|f_\tau(u_\e)|}{|x-y|}dy+\int_{\Omega\setminus B_\e(x)} \frac{|f_\tau(u_\e)|}{|x-y|}dy\Big)
\\&\le C+\frac{C'}{\e},
\end{aligned}
\end{equation*}
for some constants $C,\ C'>0$, independent of $r>0$ and $\e>0$.  The desired conclusion
follows by the substitution $\hat{u}_\e(x)=u_\e(\e x)$.
\end{proof}
As mentioned above, we have to study the behavior of $\hat{u}_\e$ as $\e\to0$. This is most delicate part of our proof. Here, the Pohozaev identity (\ref{poeqforftwotau}) is used.
\begin{lemma}\label{minusinftyintheregion}If  $\tau=1$ or $m_{j,2}\in[0,1]$, then for  any $\eta>0$,
\begin{equation}\label{numberforlemma}
\lim_{\e\to0}\Big(\sup_{|x-p_{j,2}|=\eta}\hat{u}_\e(x)\Big)=\lim_{\e\to0}\Big(\sup_{|x-p_{j,2}|=\eta}u_\e(\e x)\Big)=-\infty.
\end{equation}
Moreover, if  $w_\e\to w\ \ \textrm{in}\ \ C^2_{\textrm{loc}}(\Omega\setminus Z_2)$,
then (\ref{numberforlemma}) always holds without any further assumptions for $\tau$ and  $m_{j,2}$.
\end{lemma}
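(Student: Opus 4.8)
The plan is to reduce (\ref{numberforlemma}) to the divergence of a single scalar and then argue by contradiction via the rescaled profile $\hat u_\e$. Taking $p_{j,2}=0$ and writing $\hat v_\e(x)\equiv\hat u_\e(x)+2m_{j,2}\ln|x|$, the gradient estimate (\ref{gradgrad}) says that $\hat v_\e$ is uniformly Lipschitz on $B_{r/\e}(0)$, so that $\hat v_\e(x)=s_\e+O(\eta)$ for $|x|\le\eta$, where $s_\e\equiv\hat v_\e(0)$, and consequently
\begin{equation*}
\sup_{|x|=\eta}\hat u_\e(x)=s_\e-2m_{j,2}\ln\eta+O(1).
\end{equation*}
Thus (\ref{numberforlemma}) is \emph{equivalent} to $s_\e\to-\infty$. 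First I would suppose, for contradiction, that $s_\e$ does not diverge to $-\infty$; passing to a subsequence with $s_\e\ge-C$, the Lipschitz bound, the equation $\Delta\hat u_\e+f_\tau(\hat u_\e)=-4\pi m_{j,2}\delta_0$, and the uniform $L^1$ bound of Lemma \ref{bddofintegration} force $\hat u_\e\to\hat u$ in $C^2_{\mathrm{loc}}(\RN\setminus\{0\})$, where $\hat u$ solves the same equation on $\RN$ with $f_\tau(\hat u)\in L^1(\RN)$. A suitable adaptation of Lemma \ref{radiallysymm} makes $\hat u$ radial about $0$, so $\hat u$ is a nontopological bubble sitting at the vortex.

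I would then dispose of the two regimes of Lemma \ref{intfandcapf} separately, beginning with $w_\e\to w$, which is the content of the ``Moreover'' assertion. Here the outer limit is a \emph{finite} solution of (\ref{betajbddminusone}) whose effective vortex coefficient satisfies $\beta_{j,2}>-1$. Evaluating the Green's representation of the regular part $\hat v_\e$ at the centre — with boundary data read off from $w_\e\to w$ on a fixed circle $\partial B_\rho(0)$ and the interior mass supplied by the concentration identity (\ref{poeqforftwotau1}) — shows that the sign of $1+\beta_{j,2}$ governs the logarithmic rate of $s_\e$, and that $\beta_{j,2}>-1$ forces $s_\e\to-\infty$. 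No hypothesis on $\tau$ or $m_{j,2}$ intervenes, which is exactly the unconditional statement.

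The genuinely delicate regime is $w_\e\to-\infty$ with $g_\e=w_\e-w_\e(x_0)\to g$, where (\ref{inftyftn}) only provides $\beta_{j,2}\in\R$. The same representation now carries the additional divergent term $w_\e(x_0)\to-\infty$, so $s_\e\to-\infty$ is automatic once $\beta_{j,2}\ge-1$, and a bubble can persist only in the range $\beta_{j,2}<-1$ — precisely the range in which the candidate limit $\hat u$, decaying like $2\beta_{j,2}\ln|x|$ at infinity, is $f_\tau$-integrable. To rule this out I would insert $\hat u$ into the second Pohozaev identity (\ref{poeqforftwotau}): its concentrated energy equals $2\pi(\beta_{j,2}^2-m_{j,2}^2)$, while the hypotheses control the sign and structure of $F_{2,\tau}$ — for $\tau=1$ one has $F_{2,1}(u)=e^u/(1+e^u)^2>0$ identically, and for $m_{j,2}\in[0,1]$ the radial classification of Lemma \ref{propertyofentiresolution} (adapted to the singular source) pins down the admissible $\beta_{j,2}$ — in a manner incompatible with a finite-energy bubble carrying $\beta_{j,2}<-1$. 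This contradiction yields $s_\e\to-\infty$.

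The main obstacle is this last exclusion. For $\beta_{j,2}<-1$ the bubble is a bona fide finite-energy entire solution, so it cannot be removed by compactness or integrability alone; the whole weight of the argument rests on turning the structural hypotheses ($\tau=1$ or $m_{j,2}\in[0,1]$) into a sign in (\ref{poeqforftwotau}) that no admissible bubble can meet. A real secondary difficulty is that the scale on which $f_\tau(u_\e)/\e^2$ concentrates is itself controlled by $s_\e$, so the representation used to evaluate $s_\e$ is mildly self-referential; this is best handled by coupling the gradient bound (\ref{gradgrad}) with the concentration identity (\ref{poeqforftwotau1}) rather than estimating the two contributions in isolation.
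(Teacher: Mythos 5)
Your reduction of (\ref{numberforlemma}) to $s_\e\to-\infty$ via the gradient bound (\ref{gradgrad}) is fine, and the overall shape (contradiction via a limiting bubble at the vortex, Pohozaev identities, separate treatment of the two outer regimes) matches the paper. But there are three genuine gaps. First, ``$s_\e\ge -C$ plus the $L^1$ bound forces $\hat u_\e\to\hat u$'' is not justified: nothing you write excludes $s_\e\to+\infty$, and since $f_\tau(t)\to 0$ as $t\to+\infty$ the $L^1$ bound of Lemma \ref{bddofintegration} is not violated by such a scenario on compact sets. The paper's Step~1 handles this with a separate argument: if $\hat u_\e\to+\infty$ on a fixed circle, then since (\ref{oriassm}) forces $\hat u_\e\to-\infty$ on $\partial B_{r/\e}(0)$, the intermediate value theorem produces, at $n$ distinct angles, points at intermediate radii where $\hat u_\e$ equals a fixed negative level; each generates a limit bubble carrying mass at least $4\pi$ by Lemmas \ref{radiallysymm} and \ref{propertyofentiresolution}, and $n$ disjoint such bubbles contradict $M_0$. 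Second, you assert the limit $\hat u$ is a ``nontopological bubble,'' but the alternative $\lim_{|x|\to\infty}\hat u=0$ must be ruled out explicitly; the paper does this by noting $F_{2,\tau}(0)=\tfrac{1}{2(\tau+1)\tau^2}\neq 0$, so $\int_{B_R}F_{2,\tau}(\hat u)\,dx$ grows like $R^2$, contradicting the finite limit in (\ref{poeqforftwotau}).

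The most serious problem is your final exclusion of the bubble with $\beta_{j,2}<-1$ under ``$\tau=1$ or $m_{j,2}\in[0,1]$.'' Positivity of $F_{2,1}(u)=\frac{e^u}{(1+e^u)^2}$ inserted into (\ref{poeqforftwotau}) only yields $\beta_{j,2}^2\ge m_{j,2}^2$, i.e.\ $-\beta_{j,2}\ge m_{j,2}$, which is perfectly compatible with $\beta_{j,2}<-1$ (e.g.\ $m_{j,2}=0$, $\beta_{j,2}=-2$); no contradiction follows, and for $m_{j,2}\in[0,1]$ the ``adaptation of Lemma \ref{propertyofentiresolution} to the singular source'' that you invoke is precisely the nontrivial external input you would need to supply. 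The mechanism the paper actually uses is of a different nature: Section~4 operates under the standing hypothesis that $u_\e$ is \emph{stable}, stability passes to the limit $\hat u$, and Theorem~3.4 of \cite{CHLL} states that a radially symmetric nontopological solution of the singular equation is unstable exactly when $\tau=1$ or $m_{j,2}\in[0,1]$. Your proposal never uses stability, and without it the statement is not expected to hold (the introduction notes that bubbling at a vortex with $\tau\neq 1$, $m_{j,2}>1$ cannot be excluded in general). By contrast, your treatment of the ``Moreover'' case is essentially recoverable, though the self-referential Green's-function computation you sketch is more delicate than the paper's route, which simply derives $\beta_{j,2}<-1$ from the existence of the bubble and contradicts $\beta_{j,2}>-1$ in (\ref{betajbddminusone}).
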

\begin{proof}For the sake of simplicity, we assume that $p_{j,2}=0$. We divide the proof of our
lemma in two steps.

\textit{Step 1.} We claim that for any $\eta>0$, there exists $c_\eta>0$ such that for small $\e>0$,
\begin{equation}\label{prove}
\sup_{|x|=\eta}\hat{u}_\e(x)=\sup_{|x|=\eta}u_\e(\e x)<c_\eta.
\end{equation}
We argue by contradiction and suppose that
there exists $\eta_0>0$ such that
\begin{equation*}
\lim_{\e\to0}\Big(\sup_{|x|=\eta_0}\hat{u}_\e(x)\Big)=\lim_{\e\to0}\Big(\sup_{|x|=\eta_0}u_\e(\e x)\Big)=
+\infty.
\end{equation*}
Since $|\nabla \hat{u}_\e|$ is locally bounded in $B_{\frac{r}{\e}}(0)\setminus\{0\}$, we also  see that
\begin{equation}\label{supp}
\lim_{\e\to0}\Big(\inf_{|x|=\eta_0}\hat{u}_\e(x)\Big)=\lim_{\e\to0}\Big(\inf_{|x|=\eta_0}u_\e(\e x)\Big)=
+\infty.
\end{equation}
Fix $c\in(-\infty,0)$ and $n\in\mathbb{N}$. Since $\lim_{\e\to0}\sup_{\partial B(0,r/\e)}\hat{u}_\e=-\infty$,
then (\ref{supp}) implies that there exists $y_\e^i=(r_\e^i\cos\theta_i,r_\e^i\sin\theta_i) $ such that
$\hat{u}_\e(y_\e^i)=c$ where $\theta_i=\frac{2\pi i}{n}$ and
 $$\lim_{\e\to0}r_\e^i=+\infty,  \ \ \lim_{\e\to0}(\e r_\e^i)=0\ \ \textrm{for all}\ \ 1\le i\le n.$$
In view of (\ref{gradgrad}), we see that  the function $\bar{u}_\e^i(x)=\hat{u}_\e(x+y_\e^i)$ satisfies
$$\Delta\bar{u}_\e^i+f_\tau(\bar{u}_\e^i)=0, \ |\nabla \bar{u}_\e^i|\le C_1\ \ \textrm{on}  \ \ B_{\frac{r_\e^i}{2}}(0),\ \ \bar{u}_\e^i(0)=c<0,$$
for some constant $C_1>0$.
Then $\{\bar{u}_\e^i\}$ is uniformly bounded in $L^\infty_{\textrm{loc}}(B_{\frac{r_\e^i}{2}}(0))$ and  there exists a function $\bar{u}^i$ such that $\bar{u}_\e^i\to\bar{u}^i$ in
$C^2_{\textrm{loc}}(\RN)$ and $$\Delta\bar{u}^i+f_\tau(\bar{u}^i)=0, \ |\nabla \bar{u}^i|\le C_1\ \ \textrm{on}  \ \ \RN,\ \ \bar{u}^i(0)=c<0.$$
By using Lemma \ref{radiallysymm}, we see that $\bar{u}^i$ is radially symmetric with respect to some point
$\bar{p}^i$ in $\RN$ and $\bar{u}^i$ does not change sign.
Hence Lemma \ref{bddofintegration} and  Lemma \ref{propertyofentiresolution} together imply that
there exists a large $R>0$ such that $\int_{B_R(0)}|f_\tau(\bar{u}^i_\e)|dx\ge 4\pi$. Then,
\begin{equation*}
\begin{aligned}
M_0&\ge\int_{B_{\frac{r}{\e}}(0)}|f_\tau(\hat{u}_\e)|dx\ge\sum_{i=1}^n\int_{B_R(y_\e^i)}|f_\tau(\hat{u}_\e)|dx
\\&=\sum_{i=1}^n\int_{B_R(0)}|f_\tau(\bar{u}_\e^i)|dx
\ge 4\pi n\ \ \textrm{for any}\ \ n\in\mathbb{N},
\end{aligned}
\end{equation*} which is a contradiction. Therefore (\ref{prove}) holds as claimed.

Moreover, by using  (\ref{oriassm}), (\ref{prove}) and the maximum principle,
we obtain
 \begin{equation}\label{uppboundd}
\sup_{\eta\le|x|\le\frac{r}{\e}}\hat{u}_\e(x)=\sup_{\eta\le|x|\le\frac{r}{\e}}u_\e(\e x)<c_\eta.
\end{equation}

\textit{Step 2.} To prove our lemma, we argue by contradiction and suppose   that $\{\hat{u}_\e\}$ is uniformly bounded in $L^\infty_{\textrm{loc}}(B_{\frac{r}{\e}}(0)\setminus\{0\})$.
Then, since  $\sup_{t\in\R}|f_{\tau}(t)|<\infty$ and by using (\ref{gradgrad}) and (\ref{uppboundd}), we see that
there exists a function $\hat{u}$ such that $\hat{u}_\e\to\hat{u}$ in
$C^2_{\textrm{loc}}(\RN\setminus\{0\})$ and
\begin{equation}
\begin{aligned}\label{limitlimit}
\left\{
\begin{array}{ll}
\Delta\hat{u}+f_\tau(\hat{u})=-4\pi m_{j,2}\delta_0\ \ \ \textrm{on}  \ \ \RN,
 \\
 \\ f_\tau(\hat{u})\in L^1(\RN),\ \ \sup_{|x|\ge1} \hat{u}(x)\le C,\ \ \sup_{|x|\ge1}|\nabla \hat{u}(x)|\le C,
 \end{array}\right.
\end{aligned}
\end{equation}
for some constant $C>0$.
 Let $\hat{\beta}=\frac{1}{2\pi}\intr f_\tau(\hat{u})dx$.
Then we obtain
$$\lim_{|x|\to\infty}\frac{\hat{u}(x)}{\ln|x|}=-2m_{j,2}-\hat{\beta}.$$
In view of (\ref{limitlimit}), we also see that we cannot have $\lim_{|x|\to\infty}\hat{u}(x)>0$.
Moreover, since $f_\tau(\hat{u})\in L^1(\RN)$ and $\sup_{|x|\ge1}|\nabla \hat{u}(x)|\le C$, then we see that
\begin{equation}\label{eitheror}\textrm{either}\ \ \lim_{|x|\to\infty}\hat{u}(x)=0\ \ \textrm{or} \lim_{|x|\to\infty}\hat{u}(x)=-\infty.
\end{equation}
Indeed, if there exists  a sequence $x_n\in\RN$ such that,
$$\lim_{n\to\infty}|x_n|\to+\infty, \ \ \lim_{n\to\infty}\hat{u}(x_n)=c\notin\{0,\ -\infty\},$$
then since $\sup_{|x|\ge1}|\nabla \hat{u}(x)|\le C$,
there exist small $r_0>0$ and $c_0>0$, independent of $n$, such that
$$|f_\tau(\hat{u})|\ge c_0>0\ \ \textrm{on}\ \ B_{r_0}(x_n).$$
Then $\intr |f_\tau(\hat{u})|dx\ge\sum_{n=1}^\infty\int_{B_{r_0}(x_n)}|f_\tau(\hat{u})|dx=+\infty$
which proves (\ref{eitheror}).

If $\lim_{|x|\to\infty}\hat{u}(x)=0$, then (\ref{oriassm}) and the maximum principle imply that
there exist $c_\tau>0$ and  $R_0>0$ such that
\begin{equation}\label{eq1212}
 \hat{u}_\e<c_\tau\ \ \ \textrm{on}\ \  B_{\frac{r}{\e}}(0)\setminus B_{R_0}(0),
 \end{equation}
 which implies that $$F_{2,\tau}(\hat{u}_\e)>0\ \ \textrm{on}\ \  B_{\frac{r}{\e}}(0)\setminus B_{R_0}(0).$$
In view of $\sup_{t\in\R}|F_{2,\tau}(t)|<\infty$, (\ref{poeqforftwotau}) and  (\ref{eq1212}),
we also see that, for any $R\in(R_0,\infty)$, we have
\begin{equation}
\begin{aligned}\label{contrafrompo}
2\pi(\beta_{j,2}^2- m_{j,2}^2)&=\lim_{r, \eta \to0}\lim_{\e\to0}\int_{B_{\frac{r}{\e}(0)}\setminus B_\eta(0)}F_{2,\tau}(\hat{u}_\e)dx
\\&\ge\lim_{\eta \to0}\lim_{\e\to0}\int_{B_R(0)\setminus B_\eta(0)}F_{2,\tau}(\hat{u}_\e)dx
=\int_{B_R(0)}F_{2,\tau}(\hat{u})dx.
\end{aligned}
\end{equation}
Since $\lim_{|x|\to\infty}\hat{u}(x)=0$,
we see that
$$\lim_{|x|\to\infty}F_{2,\tau}(\hat{u})
=\lim_{|x|\to\infty}\frac{e^{\hat{u}}((1-\tau)e^{\hat{u}}+2\tau)}{2\tau^2(\tau+e^{\hat{u}})^2}=\frac{1}{2(\tau+1)\tau^2}\neq0,$$
which shows that the right hand side of (\ref{contrafrompo}) could be arbitrarily large, which is impossible.
Hence the first case in (\ref{eitheror}) cannot occur.

If $\lim_{|x|\to\infty}\hat{u}(x)=-\infty$, then in view of
(\ref{oriassm}) and the maximum principle, there exists $R_0>0$ such that
\begin{equation}
\begin{aligned}\label{strictlylessinfty}
  \hat{u}_\e<0 \ \ \textrm{on}\ \  B_{\frac{r}{\e}}(0)\setminus B_{R_0}(0).
\end{aligned}
\end{equation}
By using Lemma \ref{intfandcapf} and (\ref{strictlylessinfty}), we see that
\begin{equation*}
\begin{aligned}
2\pi\hat{\beta}&=\lim_{R\to\infty}\int_{|x|\le R}f_\tau(\hat{u})dx
=\lim_{R\to\infty}\lim_{\e\to0}\int_{|x|\le R}f_\tau(\hat{u}_\e)dx
\\&\le\lim_{r\to0}\lim_{\e\to0}\int_{|x|\le \frac{r}{\e}}f_\tau(\hat{u}_\e)dx
= -4\pi(m_{j,2}+\beta_{j,2}).
\end{aligned}
\end{equation*}
Hence we conclude that
\begin{equation*}
\hat{\beta}\le-2(m_{j,2}+\beta_{j,2}),
\end{equation*}
and in particular that
\begin{equation}\label{lnx1}
\lim_{|x|\to\infty}\frac{\hat{u}(x)}{\ln|x|}=-2m_{j,2}-\hat{\beta}\ge2\beta_{j,2}.
\end{equation}
By using (\ref{limitlimit}) and $\lim_{|x|\to\infty}\hat{u}(x)=-\infty$, we see that $e^{\hat{u}}\in L^1(\RN\setminus B_1(0))$ and then
\begin{equation}\label{lnx2}
\lim_{|x|\to\infty}\frac{\hat{u}(x)}{\ln|x|}<-2.
\end{equation}
At this point, the method of moving planes can be used together with (\ref{lnx2}) to prove that
$\hat{u}$ is radially symmetric (see  \cite{ChL, GNN}).
Moreover, (\ref{lnx1}) and (\ref{lnx2}) imply that
\begin{equation}
\label{rangeforbetainfty}\beta_{j,2}<-1.
\end{equation}
If  $w_\e\to w\ \ \textrm{in}\ \ C^2_{\textrm{loc}}(\Omega\setminus Z_2)$,
then (\ref{rangeforbetainfty}) contradicts (\ref{betajbddminusone}).
Moreover, if $\tau=1$ or $m_{j,2}\in[0,1]$, then Theorem 3.4 in \cite{CHLL}
imply that $\hat{u}$ cannot be stable solution, which yields a contradiction and
completes the proof of our lemma.
\end{proof}

\begin{lemma}\label{mplusbetaeq0} If \begin{equation}\label{as1}
\lim_{\e\to0}\Big(\sup_{|x-p_{j,2}|=\eta}\hat{u}_\e(x)\Big)=\lim_{\e\to0}\Big(\sup_{|x-p_{j,2}|=\eta}u_\e(\e x)\Big)=-\infty,
\end{equation} then   \begin{equation}\label{anotherclaim}
\lim_{\e\to0}\Big(\sup_{\eta\le|x-p_{j,2}|\le\frac{r}{\e}}\hat{u}_\e(x)\Big)=\lim_{\e\to0}\Big(\sup_{\eta\le|x-p_{j,2}|\le\frac{r}{\e}}u_\e(\e x)\Big)=-\infty.
\end{equation} Moreover, $m_{j,2}+\beta_{j,2}=0$.
\end{lemma}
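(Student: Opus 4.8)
Set $p_{j,2}=0$. The first task is to upgrade the sphere-wise decay supplied by the hypothesis (\ref{as1}) to the uniform decay (\ref{anotherclaim}) on the full annulus $A_\e\equiv\{\eta\le|x|\le r/\e\}$, and the plan is to run a maximum–principle plus blow–up argument there. First I would record that $\hat u_\e\to-\infty$ on \emph{both} components of $\partial A_\e$: on the inner sphere by (\ref{as1}), and on the outer sphere $\{|x|=r/\e\}$ because (\ref{oriassm}) gives $\hat u_\e=w_\e(\e\,\cdot)+2\ln\e<c_r+2\ln\e\to-\infty$. Since $f_\tau(t)<0$ for $t>0$, an interior maximum of $\hat u_\e$ with positive value would force $\Delta\hat u_\e>0$ there, a contradiction; hence for small $\e$ the maximum cannot be positive, so $\hat u_\e\le 0$ on $A_\e$.

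Next I would prove (\ref{anotherclaim}) by contradiction. If $\sup_{A_\e}\hat u_\e\ge -c_0$ along a subsequence, pick a near–maximiser $x_\e$. Because every fixed sphere has $\sup\hat u_\e\to-\infty$ we must have $|x_\e|\to\infty$, and the gradient bound (\ref{gradgrad}) (which yields $|\nabla\hat u_\e|\le C_\eta$ on $\{|x|\ge\eta\}$) forces $\mathrm{dist}(x_\e,\partial A_\e)\to\infty$, since climbing from $-\infty$ on $\partial A_\e$ up to level $-c_0$ costs a distance tending to infinity. Translating, $\bar u_\e(y)\equiv\hat u_\e(y+x_\e)$ is then bounded on every fixed ball and, as $\tfrac{2m_{j,2}(y+x_\e)}{|y+x_\e|^2}\to0$, converges in $C^2_{\mathrm{loc}}(\RN)$ to an entire solution $\bar u\le 0$ of $\Delta\bar u+f_\tau(\bar u)=0$ with $f_\tau(\bar u)\in L^1(\RN)$ and bounded gradient. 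By Lemma \ref{radiallysymm} and Lemma \ref{propertyofentiresolution}, $\bar u$ is either $\equiv0$ or a nontopological solution of type I. The case $\bar u\equiv0$ is excluded by the Pohozaev identity (\ref{poeqforftwotau}): on $B_R(x_\e)\subset A_\e$ one has $F_{2,\tau}(\hat u_\e)\to F_{2,\tau}(0)=\tfrac{1}{2\tau^2(\tau+1)}>0$, while $F_{2,\tau}\ge0$ wherever $\hat u_\e\le0$ and $F_{2,\tau}$ is bounded on the bounded exceptional set $B_\eta$; hence $\int_{B_{r/\e}}F_{2,\tau}(\hat u_\e)\gtrsim R^2$, contradicting the finite limit $2\pi(\beta_{j,2}^2-m_{j,2}^2)$ as $R\to\infty$.

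The main obstacle is the surviving type I alternative, i.e.\ a stable solution bubbling at the vortex $p_{j,2}$. Here I would use that local stability of $u_\e$ passes to the translated blow–up limit: in two dimensions the Dirichlet form is scale invariant and the factors $\e^{-2}$ and the Jacobian cancel under $x\mapsto\e x$, so nonnegativity of the quadratic form $\int|\nabla\phi|^2-\e^{-2}f_\tau'(u_\e)\phi^2$ becomes nonnegativity of $\int|\nabla\psi|^2-f_\tau'(\hat u_\e)\psi^2$, which persists in the limit. Thus $\bar u$ would be a \emph{stable} nontopological solution, contradicting its instability (guaranteed under the standing hypothesis behind (\ref{as1}), namely $\tau=1$ or $m_{j,2}\in[0,1]$, via Theorem~3.4 of \cite{CHLL}/the Appendix), or contradicting $\beta_{j,2}>-1$ in (\ref{betajbddminusone}) when $w_\e\to w$. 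This settles (\ref{anotherclaim}). Finally, for $m_{j,2}+\beta_{j,2}=0$ I would feed (\ref{anotherclaim}) into the mass identity (\ref{poeqforftwotau1}) of Lemma \ref{intfandcapf}: splitting $\int_{B_{r/\e}}f_\tau(\hat u_\e)=\int_{B_\eta}+\int_{A_\e}$, the annulus term tends to $0$ because $\hat u_\e\to-\infty$ there gives $f_\tau(\hat u_\e)\to0$, the far–field being controlled by the $w$– (resp.\ $g$–) asymptotics $\hat u_\e\sim 2\beta_{j,2}\ln|x|+(2\beta_{j,2}+2)\ln\e$ and hence of size $O(r^{2\beta_{j,2}+2})\to0$; the inner term vanishes as $\eta\to0$ because, under (\ref{as1}), $\hat u_\e$ collapses to its pure singular profile $-2m_{j,2}\ln|x|$, which is asymptotically harmonic and carries no net nonlinear mass. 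Since the total mass equals $-4\pi(m_{j,2}+\beta_{j,2})$ by Lemma \ref{intfandcapf}, this forces $m_{j,2}+\beta_{j,2}=0$. I expect the delicate step to be the exclusion of the stable vortex bubble, which is exactly where stability and the Pohozaev identity (\ref{poeqforftwotau}) must be combined.
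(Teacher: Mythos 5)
Your treatment of the first assertion (\ref{anotherclaim}) is sound and close to the paper's own Step~1: a translated blow-up at a point of the annulus where $\hat{u}_\e$ stays bounded produces an entire, vortex-free, sign-definite solution with $f_\tau\in L^1(\RN)$, which is radially symmetric by Lemma~\ref{radiallysymm}, nontopological of type I, and hence unstable --- note that no hypothesis on $\tau$ or $m_{j,2}$ is needed at this stage, since the limiting equation carries no singular source. (The paper avoids your Pohozaev-based exclusion of the trivial limit by simply choosing the contact point at a fixed level $c<0$ via the intermediate value theorem, so that $\bar{u}(0)=c<0$ rules out $\bar{u}\equiv 0$ at once.)

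The argument for $m_{j,2}+\beta_{j,2}=0$, however, has a genuine gap: it is circular. You claim that the integral of $f_\tau(\hat{u}_\e)$ over the annulus $\{\eta\le|x|\le r/\e\}$ tends to $0$ ``because $\hat{u}_\e\to-\infty$ there gives $f_\tau(\hat{u}_\e)\to0$.'' Pointwise (even uniform) smallness of the integrand does not control an integral over a region whose area is of order $(r/\e)^2$; there one has $f_\tau(\hat{u}_\e)\sim e^{\hat{u}_\e}/\tau^3>0$, and by Lemma~\ref{intfandcapf} this annular mass equals $-4\pi(m_{j,2}+\beta_{j,2})$ up to terms vanishing as $\eta\to0$ and $r\to0$. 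Asserting that it vanishes is asserting the conclusion. The far-field asymptotics you invoke only settle the case $w_\e\to w$, where $\beta_{j,2}>-1$ from (\ref{betajbddminusone}) already forces the conclusion; in the hard case $\sup w_\e\to-\infty$ there is no limiting profile $w$ and $\beta_{j,2}$ is a priori unrestricted. What is actually required, and what the paper does, is the following. First, since $f_\tau(u)$, $e^{u}/\tau^3$ and $F_{2,\tau}(u)$ all have the same leading behavior as $u\to-\infty$, combining (\ref{poeqforftwotau1}) with the Pohozaev identity (\ref{poeqforftwotau}) yields $-4\pi(m_{j,2}+\beta_{j,2})=2\pi(\beta_{j,2}^2-m_{j,2}^2)$, hence either $m_{j,2}+\beta_{j,2}=0$ or $m_{j,2}-\beta_{j,2}=2$. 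Then, to exclude the second alternative when $m_{j,2}+\beta_{j,2}<0$, one must establish the refined decay $\sup_{\eta\le|x|\le r/\e}e^{\hat{u}_\e(x)}|x|^2\to0$ (itself a delicate two-case blow-up argument), select an intermediate radius $r_\e\to\infty$ capturing mass $\lim_{\e\to0}\int_{B_{r_\e}(0)}f_\tau(\hat{u}_\e)\,dx=4\pi d$ for an arbitrary $d\in(0,-(m_{j,2}+\beta_{j,2}))$, and prove a second Pohozaev identity at scale $r_\e$ giving $4\pi d=2\pi d(d+2m_{j,2})$, i.e.\ $d+2m_{j,2}=2$ for every such $d$ --- which is the contradiction. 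None of this machinery appears in your proposal, and it is precisely the hard core of the lemma.
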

\begin{proof}For the sake of simplicity, we assume that $p_{j,2}=0$. We divide the proof of
our lemma in the following steps.

\textit{Step 1.}
 To prove (\ref{anotherclaim}),  we argue by contradiction and suppose that for some constant $c\in(-\infty,0)$, there exists $y_\e\in B_{\frac{r}{\e}}(0)\setminus B_\eta(0)$ such that
$\hat{u}_\e(y_\e)=c$.
In view of (\ref{oriassm}), (\ref{gradgrad}) and (\ref{as1}), we see that $$\lim_{\e\to0}|y_\e|=\infty \ \ \textrm{and} \ \ \ \lim_{\e\to0}(\e|y_\e|)=0.$$
Moreover, by using (\ref{gradgrad}), we see that  the function $\bar{u}_\e(x)=\hat{u}_\e(x+y_\e)$ satisfies
\begin{equation*}
\begin{aligned}
\left\{
\begin{array}{ll}
\Delta\bar{u}_\e+f_\tau(\bar{u}_\e)=0, \ |\nabla \bar{u}_\e|\le C_1\ \ \textrm{on}  \ \ B_{\frac{|y_\e|}{2}}(0),
 \\ \bar{u}_\e(0)=c<0,\ \ f_\tau(\bar{u}_\e)\in L^1(B_{\frac{|y_\e|}{2}}(0)),\
 \end{array}\right.
\end{aligned}
\end{equation*}
for some constant $C_1>0$.
Then $\{\bar{u}_\e\}$ is uniformly bounded in $L^\infty_{\textrm{loc}}(B_{\frac{|y_\e|}{2}}(0))$ and  there exists a function $\bar{u}$ such that $\bar{u}_\e\to\bar{u}$ in
$C^2_{\textrm{loc}}(\RN)$ and
\begin{equation*}
\begin{aligned}
\left\{
\begin{array}{ll}
\Delta\bar{u}+f_\tau(\bar{u})=0, \ |\nabla \bar{u}|\le C_1\ \ \textrm{on}  \ \ \RN,
 \\ \bar{u}(0)=c<0,\ \ f_\tau(\bar{u})\in L^1(\RN).
 \end{array}\right.
\end{aligned}
\end{equation*}
By using   Lemma \ref{radiallysymm}, we conclude that $\bar{u}$ is a nontopological radially symmetric  solution.
Then Theorem 3.4 in \cite{CHLL} shows that $\bar{u}$ cannot be a stable solution which
proves (\ref{anotherclaim}).

\textit{Step 2.} By using Lemma \ref{intfandcapf}  and (\ref{anotherclaim}) we see that
 \begin{equation}\begin{aligned}\label{as3infty}
-4\pi(m_{j,2}+\beta_{j,2})
 &=\lim_{r, \eta\to0}\lim_{\e\to0}\int_{B_{\frac{r}{\e}}(0)\setminus B_{\eta}(0)}f_\tau(\hat{u}_\e)dx
=\lim_{r, \eta\to0}\lim_{\e\to0}\int_{B_{\frac{r}{\e}}(0)\setminus B_{\eta}(0)}\frac{e^{\hat{u}_\e}}{\tau^3}dx
  \\&=\lim_{r, \eta\to0}\lim_{\e\to0}\int_{B_{\frac{r}{\e}}(0)\setminus B_{\eta}(0)}F_{2,\tau}(\hat{u}_\e)dx
  =2\pi(\beta_{j,2}^2-m_{j,2}^2),
 \end{aligned}\end{equation}
 which implies
 \begin{equation*}
\begin{aligned}
\left\{
 \begin{array}{ll}
m_{j,2}+\beta_{j,2}\le0,\ \ \textrm{and}
\\ \textrm{either} \ \ m_{j,2}+\beta_{j,2}=0\ \ \textrm{or}\ \ \ m_{j,2}-\beta_{j,2}=2.
 \end{array}\right.
\end{aligned}
\end{equation*} \begin{equation*}
 \end{equation*}
To prove our lemma, we  argue by contradiction and suppose that  \begin{equation}\label{betageminus11}m_{j,2}+\beta_{j,2}<0,\end{equation}
which implies
 \begin{equation}\label{betageminus1}
m_{j,2}-\beta_{j,2}=2, \ \  \ m_{j,2}<1<-\beta_{j,2}.
\end{equation}

If $w_\e\to w\ \ \textrm{in}\ \ C^2_{\textrm{loc}}(\Omega\setminus Z_2)$, then (\ref{betageminus1}) contradicts $\beta_{j,2}>-1$ in  (\ref{betajbddminusone}), and we obtain that $m_{j,2}+\beta_{j,2}=0$ in this case.

Therefore, from now on,  we assume that
\begin{equation}\label{assuminlemma}\lim_{\e\to0}\Big(\sup_{\Omega\setminus\cup_{j}(B_r(p_{j,2}))}w_\e\Big)=-\infty\ \ \textrm{for any small}\ \ r>0.\end{equation}
Then (\ref{as1}) and (\ref{assuminlemma}) imply that for any $r, \eta>0$,
\begin{equation}\label{claimbounderycond00} \lim_{\e\to0}\Big(\sup_{x\in \partial B_{\frac{r}{\e}}(0)\cup\partial B_\eta(0)}e^{\hat{u}_\e(x)}|x|^2\Big)=0.
\end{equation}

\textit{Step 3.} We claim that for any $r, \eta>0$,
\begin{equation}\label{claimbounderycond2} \lim_{\e\to0}\Big(\sup_{x\in B_{\frac{r}{\e}}(0)\setminus B_\eta(0)}e^{\hat{u}_\e(x)}|x|^2\Big)=0.
\end{equation}
Let us choose $y_\e\in B_{\frac{r}{\e}}(0)\setminus B_\eta(0)$ such that
$$e^{\hat{u}_\e(y_\e)}|y_\e|^2=\Big(\sup_{x\in B_{\frac{r}{\e}}(0)\setminus B_\eta(0)}e^{\hat{u}_\e(x)}|x|^2\Big).$$
We consider the function $\tilde{u}_\e(x)\equiv\hat{u}_\e(|y_\e| x)+2\ln |y_\e|$. Then $\tilde{u}_\e$ satisfies
\begin{equation*}
\Delta \tilde{u}_\e+ \frac{e^{\tilde{u}_\e}(1-e^{\tilde{u}_\e}/|y_\e|^2)}{(\tau+e^{\tilde{u}_\e}/|y_\e|^2)^3}=-4\pi m_{j,2}\delta_0\ \ \textrm{on}\ \ B_{\frac{r}{\e |y_\e|}}(0).
\end{equation*}
Moreover, \begin{equation}\label{upb}
\begin{aligned}\tilde{u}_\e(x)&=\hat{u}_\e(|y_\e| x)+2\ln(|y_\e||x|)-2\ln|x|\\&\le\tilde{u}_\e\Big(\frac{y_\e}{|y_\e|}\Big)-2\ln|x|\ \ \textrm{on} \ \ B_{\frac{r}{\e|y_\e|}}(0)\setminus B_{\frac{\eta}{|y_\e|}}(0).
\end{aligned}\end{equation}
To prove the claim (\ref{claimbounderycond2}), we argue by contradiction and consider the following two cases.

Case 1: Suppose that
$$\lim_{\e\to0}\Big(e^{\hat{u}_\e(y_\e)}|y_\e|^2\Big)=+\infty.$$
Then we see that, in view of (\ref{claimbounderycond00}), we have
$\lim_{\e\to0}|y_\e|=+\infty$  and $\lim_{\e\to0}(\e |y_\e|)=0$.

Moreover, we see that
 $$s_\e\equiv\exp\Big(-\frac{1}{2}\tilde{u}_\e\Big(\frac{y_\e}{|y_\e|}\Big)\Big)\to0\ \ \textrm{as}\ \ \e\to0.$$
In view of (\ref{upb}), we see that for any $x\in B_{\frac{1}{\delta}}(0)\setminus B_\delta(0)$,
\begin{equation}\label{upb2}
\begin{aligned}\tilde{u}_\e(x)\le \tilde{u}_\e\Big(\frac{y_\e}{|y_\e|}\Big)-2\ln\delta.
\end{aligned}
\end{equation}
By using (\ref{anotherclaim}) and $\lim_{\e\to0}|y_\e|=+\infty$, we also see that
\begin{equation}
\begin{aligned}\label{L0}\lim_{\e\to0}\Big(\frac{e^{\tilde{u}_\e(x)}}{|y_\e|^2}\Big)=\lim_{\e\to0}e^{\hat{u}_\e(|y_\e|x)}=0\ \ \textrm{on}\ \  B_{\frac{1}{\delta}}(0)\setminus B_\delta(0).\end{aligned}
\end{equation}

Let $\bar{w}_\e(x)\equiv\tilde{u}_\e\Big(s_\e x+\frac{y_\e}{|y_\e|}\Big)+2\ln s_\e$ for $|x|<\frac{\delta}{2s_\e}$.
For small $\e, \delta>0$, $\bar{w}_\e$ satisfies
\begin{equation*}
\Delta \bar{w}_\e+ \frac{e^{\bar{w}_\e}(1-\frac{e^{\bar{w}_\e}}{s_\e^2|y_\e|^2})}{(\tau+\frac{e^{\bar{w}_\e}}{s_\e^2|y_\e|^2})^3}=0
\ \ \textrm{on}\ B_{\frac{\delta}{2s_\e}}(0).
\end{equation*}
By using (\ref{upb2}), we see that
\begin{equation}\label{forharnkeq}
\begin{aligned}
\left\{
 \begin{array}{ll}
\bar{w}_\e(x)\le \tilde{u}_\e(y_\e/|y_\e|)-2\ln\delta+2\ln s_\e=-2\ln\delta\ \ \textrm{for} \ \ |x|<\frac{\delta}{2s_\e},
\\ \bar{w}_\e(0)=\tilde{u}_\e(y_\e/|y_\e|)+2\ln s_\e=0.
 \end{array}\right.
\end{aligned}
\end{equation}
In view of (\ref{L0}), we also conclude that $\lim_{\e\to0}\Big(\frac{1}{s_\e^2|y_\e|^2}\Big)=0$ and for small $\e>0$,
\begin{equation}\label{forharnkeq2}
\begin{aligned}
0\le-\Delta\bar{w}_\e\le\frac{1}{\delta^2\tau^3}\ \ \textrm{on}\ B_{\frac{\delta}{2s_\e}}(0).
\end{aligned}
\end{equation}
By using (\ref{forharnkeq}), (\ref{forharnkeq2}), and Lemma \ref{harnarkineq}, we see that  for any $p>1$ and $R>0$,
there exist constants $\sigma\in(0,1)$ and $\gamma>0$,
depending on $R>0$ only such that
$$0=\bar{w}_\e(0)\le \sup_{B_R(0)} \bar{w}_\e\le\sigma\inf_{B_R(0)}\bar{w}_\e+(1+\sigma)\gamma\|\Delta \bar{w}_\e\|_{L^p(B_{2R}(0))}-2(1-\sigma)\ln\delta,$$
which implies that  $\bar{w}_\e$ is bounded in $C^0_{\textrm{loc}}(B_{\frac{\delta}{2s_\e}}(0))$.
Then there exists a functon $w_*$ such that
$\bar{w}_\e\to w_*$ in $C^2_{\textrm{loc}}(\RN)$. By Lemma \ref{bddofintegration}, $w_*$ satisfies
\begin{equation*}
\begin{aligned}
\left\{
 \begin{array}{ll}
\Delta w_*+ \frac{e^{w_*}}{\tau^3}=0
\ \textrm{in}\ \RN,
\\w_*(0)=0, \ \ e^{w_*}\in L^1(\RN).
 \end{array}\right.
\end{aligned}
\end{equation*}
However we see that $w_*$ cannot be a stable solution, which yields the desired contradiction and rules out Case 1.

Case 2: Suppose that there exists $c>0$ such that
\begin{equation}\label{ha0}e^{-c}<\lim_{\e\to0}e^{\hat{u}_\e(y_\e)}|y_\e|^2<e^c.\end{equation}
Then, in view of (\ref{claimbounderycond00}), we see that $\lim_{\e\to0}|y_\e|=+\infty$
and $\lim_{\e\to0}(\e |y_\e|)=0$.
By using (\ref{upb}) and (\ref{ha0}), we also conclude that
\begin{equation}\label{ha1}
\begin{aligned}
\left\{
 \begin{array}{ll}
\tilde{u}_\e(x)\le\tilde{u}_\e\Big(\frac{y_\e}{|y_\e|}\Big)-2\ln|x|\le c-2\ln|x|\ \ \textrm{for} \ \ x\in B_{\frac{1}{\delta}}(0)\setminus B_\delta(0),
\\ -c\le \tilde{u}_\e(y_\e/|y_\e|).
 \end{array}\right.
\end{aligned}
\end{equation}
By using (\ref{anotherclaim}) and $\lim_{\e\to0}|y_\e|=+\infty$, we also have
\begin{equation}
\begin{aligned}\label{L0'}\lim_{\e\to0}\Big(\frac{e^{\tilde{u}_\e(x)}}{|y_\e|^2}\Big)=\lim_{\e\to0}e^{\hat{u}_\e(|y_\e|x)}=0\ \ \textrm{on}\ \  B_{\frac{1}{\delta}}(0)\setminus B_\delta(0).\end{aligned}
\end{equation}
Then (\ref{L0'}) implies that  for small $\e>0$,
\begin{equation}\label{ha2}
0\le-\Delta\tilde{u}_\e=\frac{e^{\tilde{u}_\e}(1-e^{\tilde{u}_\e}/|y_\e|^2)}{(\tau+e^{\tilde{u}_\e}/|y_\e|^2)^3}\le\frac{e^c}{\delta^2\tau^3}\ \ \textrm{on}\ B_{\frac{1}{\delta}}(0)\setminus B_\delta(0).
\end{equation}
By using (\ref{ha1}), (\ref{ha2}), and Lemma \ref{harnarkineq}, we see that  for any $p>1$ and $\delta>0$,
there exist constants $\sigma\in(0,1)$ and $\gamma>0$,
depending only on $\delta>0$ such that
\begin{equation*}\begin{aligned}
-c&\le \tilde{u}_\e(y_\e/|y_\e|)\le \sup_{B_{\frac{1}{\delta}}(0)\setminus B_\delta(0)} \tilde{u}_\e
\\&\le\sigma\inf_{B_{\frac{1}{\delta}}(0)\setminus B_\delta(0)}\tilde{u}_\e+(1+\sigma)\gamma\|\Delta \tilde{u}_\e\|_{L^p(B_{\frac{2}{\delta}}(0)\setminus B_{\frac{\delta}{2}}(0))}+(1-\sigma)\Big(c-2\ln\Big(\frac{\delta}{2}\Big)\Big),
\end{aligned}\end{equation*}
which implies that $\tilde{u}_\e$ is bounded in $C^0_{\textrm{loc}}(B_{\frac{r}{\e|y_\e|}}(0)\setminus\{0\})$.
Let $$\alpha=\lim_{\delta\to0}\lim_{\e\to0}\int_{B_\delta(0)}\frac{e^{\tilde{u}_\e}(1-e^{\tilde{u}_\e}/|y_\e|^2)}{(\tau+e^{\tilde{u}_\e}/|y_\e|^2)^3}dx
=\lim_{\delta\to0}\lim_{\e\to0}\int_{B_{\delta|y_\e|}(0)}\frac{e^{\hat{u}_\e}(1-e^{\hat{u}_\e})}{(\tau+e^{\hat{u}_\e})^3}dx.$$
In view of Lemma \ref{bddofintegration}, we see that there exists a function $w_*$ such that
$\tilde{u}_\e\to w_*$ in $C^2_{\textrm{loc}}(\RN\setminus\{0\})$ and
\begin{equation*}
\begin{aligned}
\left\{
 \begin{array}{ll}
\Delta w_*+ \frac{e^{w_*}}{\tau^3}=(-\alpha-4\pi m_{j,2})\delta_0
\ \textrm{in}\ \RN,
\\ (-\alpha-4\pi m_{j,2})>-4\pi,
\\ w_*\le c-2\ln|x|, \ \ e^{w_*}\in L^1(\RN).
 \end{array}\right.
\end{aligned}
\end{equation*}
However, $w_*$ cannot be a stable solution, which yields once more a contradiction and concludes
the proof of (\ref{claimbounderycond2}) as claimed.

\textit{Step 4.} For any $d\in(0,-(m_{j,2}+\beta_{j,2}))$, there exists $r_\e:=r_\e(d)\in(0,\frac{r}{\e})$ such that
\begin{equation}\label{new}\lim_{\e\to0}\int_{B_{r_\e}(0)}f_\tau(\hat{u}_\e)dx=4\pi d.\end{equation}
Now we claim that 
\begin{equation}
\begin{aligned}\label{poeqnewclaim}
&\lim_{\e\to0}\int_{B_{r_\e}(0)}F_{2,\tau}(\hat{u}_\e)dx
=2\pi d(d+2m_{j,2}).
\end{aligned}
\end{equation}
By  using  (\ref{anotherclaim}) and (\ref{claimbounderycond2}), we see that
 $\lim_{\e\to0}r_\e=+\infty$ and $\lim_{\e\to0}(\e r_\e)=0$.
Let us consider the function $\hat{\hat{u}}_\e(x)\equiv\hat{u}_\e(r_\e x)+2\ln r_\e$ which satisfies
$$\Delta \hat{\hat{u}}_\e+ \frac{e^{\hat{\hat{u}}_\e}(1-e^{\hat{\hat{u}}_\e}/r_\e^2)}{(\tau+e^{\hat{\hat{u}}_\e}/r_\e^2)^3}=-4\pi m_{j,2}\delta_0\ \ \textrm{on}\ \ B_{\frac{r}{\e r_\e}}(0).$$
 We claim that   for any $\delta>0$, there exists $C_\delta>0$ such that
\begin{equation}
\begin{aligned}\label{difflessfi}
|\hat{\hat{u}}_\e(x_1)-\hat{\hat{u}}_\e(x_2)|\le C_\delta\ \ \textrm{for any}\ \ x_1, x_2\in B_{\frac{1}{\delta}}(0)\setminus B_\delta(0).
\end{aligned}\end{equation}
By using the Green's representation formula for a solution $u_\e$ of (\ref{maineq}) and Lemma \ref{bddofintegration}, we see that $\textrm{for any}\ \ x_1, x_2\in B_{\frac{1}{\delta}}(0)\setminus B_\delta(0)$,
\begin{equation*}
\begin{aligned}\hat{\hat{u}}_\e(x_1)-\hat{\hat{u}}_\e(x_2)&=u_\e(\e r_\e x_1)-u_\e(\e r_\e x_2)
\\&=\int_\Omega (G(\e r_\e x_1,y)-G(\e r_\e x_2,y))(-\Delta u_\e(y))dy
\\&=O(1)+\int_\Omega (G(\e r_\e x_1,y)-G(\e r_\e x_2,y))\frac{f_\tau(u_\e(y))}{\e^2}dy
\\&=O(1)+\frac{1}{2\pi}\int_\Omega \ln\Big(\frac{|\e r_\e x_2-y|}{|\e r_\e x_1-y|}\Big)\frac{f_\tau(u_\e(y))}{\e^2}dy.
\end{aligned}\end{equation*}
By the mean value theorem, there exists $\theta=\theta(\e,y)\in(0,1)$ such that
 \begin{equation}
\begin{aligned}\label{forenqq}
|\ln|\e r_\e x_2-y|-\ln|\e r_\e x_1-y||&=\frac{||\e r_\e x_2-y|-|\e r_\e x_1-y||}{\theta|\e r_\e x_2-y|+(1-\theta)|\e r_\e x_1-y|}
\\&\le\frac{|\e r_\e (x_1-x_2)|}{\theta|\e r_\e x_2-y|+(1-\theta)|\e r_\e x_1-y|}.
\end{aligned}
\end{equation}
For any $y\in\Omega\setminus B_{\frac{2\e r_\e}{\delta}}(0)$,  we have $|\e r_\e x_i-y|\ge\frac{\e r_\e}{\delta}$ for $i=1,2$.
Then by using Lemma \ref{bddofintegration} and (\ref{forenqq}), we see that
 \begin{equation}
\begin{aligned}\label{fofclaim1}\int_{\Omega\setminus B_{\frac{2\e r_\e}{\delta}}(0)}\Big| \ln\Big(\frac{|\e r_\e x_2-y|}{|\e r_\e x_1-y|}\Big)\frac{f_\tau(u_\e)}{\e^2}\Big| dy=O(1).\end{aligned}
\end{equation}
By using the fact that $|\e r_\e x_i-y|\ge\frac{\e r_\e\delta}{2}$ for $i=1,2$ for any
$y\in B_{\frac{\e r_\e\delta}{2}}(0)$ together with Lemma \ref{bddofintegration} and (\ref{forenqq}), we also have
 \begin{equation}
\begin{aligned}\label{fofclaim2}\int_{B_{\frac{\e r_\e\delta}{2}}(0)}\Big| \ln\Big(\frac{|\e r_\e x_2-y|}{|\e r_\e x_1-y|}\Big)\frac{f_\tau(u_\e)}{\e^2}\Big|dy=O(1).\end{aligned}
\end{equation}
Moreover, by using (\ref{anotherclaim}) and (\ref{claimbounderycond2}), we see that
\begin{equation}
\begin{aligned}\label{fofclaim3}
&\int_{B_{\frac{2\e r_\e}{\delta}}(0)\setminus B_{\frac{\e r_\e\delta}{2}}(0)} \ln\Big(\frac{|\e r_\e x_2-y|}{|\e r_\e x_1-y|}\Big)\frac{f_\tau(u_\e)}{\e^2}dy
\\&=\int_{B_{\frac{2 r_\e}{\delta}}(0)\setminus B_{\frac{ r_\e\delta}{2}}(0)} \ln\Big(\frac{|r_\e x_2-y|}{|r_\e x_1-y|}\Big)f_\tau(\hat{u}_\e(y))dy
\\&\le2r_\e|x_1-x_2|\sup_{B_{\frac{2 r_\e}{\delta}}(0)\setminus B_{\frac{ r_\e\delta}{2}}(0)}(|f_\tau(\hat{u}_\e)|)
\int_{B_{\frac{4 r_\e}{\delta}}(0)} \frac{1}{|y|}dy
\\&=\frac{16\pi|x_1-x_2|}{\delta}\sup_{B_{\frac{2 r_\e}{\delta}}(0)\setminus B_{\frac{ r_\e\delta}{2}}(0)}(r_\e^2|f_\tau(\hat{u}_\e)|)
\\&\le\frac{32\pi|x_1-x_2|}{\delta\tau^3}\sup_{B_{\frac{2 r_\e}{\delta}}(0)\setminus B_{\frac{ r_\e\delta}{2}}(0)}(r_\e^2e^{\hat{u}_\e})=O(1).
\end{aligned}\end{equation}
At this point, (\ref{difflessfi}) follows by using
(\ref{fofclaim1}), (\ref{fofclaim2}), and (\ref{fofclaim3}).

Now we fix $y_0\in\RN\setminus\{0\}$. Then, in view of (\ref{claimbounderycond2}), (\ref{new}) and (\ref{difflessfi}),
we see that there exists a function $h$ such that
$h_\e\equiv\hat{\hat{u}}_\e-\hat{\hat{u}}_\e(y_0)\to h$ in $C^2_\textrm{loc}(\RN\setminus\{0\})$ and
$$\Delta h=-4\pi(m_{j,2}+d)\delta_0\ \ \ \textrm{in}\ \ \RN.$$
We also conclude that the function $v(x)=h(x)+2(m_{j,2}+d)\ln|x|$ satisfies
\begin{equation}\label{forfutu}
\Delta v=0\ \ \textrm{in}\ \ \RN.
\end{equation}

We consider the function $v_\e(x)\equiv h_\e(x)+2m_{j,2}\ln|x|$ which satisfies
$$\Delta v_\e(x)+ \frac{r_\e^2e^{\hat{u}_\e(r_\e x)}(1-e^{\hat{u}_\e(r_\e x)})}{(\tau+e^{\hat{u}_\e(r_\e x)})^3}=0\ \ \textrm{on}\ \ B_{\frac{r}{\e r_\e}}(0).$$
Let $v_\e(x)=\hat{v}_\e(r_\e x)$. Then, we see that
\begin{equation}\label{preeq}
\Delta \hat{v}_\e+ \frac{e^{\hat{u}_\e}(1-e^{\hat{u}_\e})}{(\tau+e^{\hat{u}_\e})^3}=0\ \ \textrm{on}\ \ B_{\frac{r}{\e}}(0).
\end{equation}
Multiplying (\ref{preeq}) by $\nabla \hat{u}_\e\cdot x$ and integrating over $B_{r_\e}(0)$, we conclude that
\begin{equation}
\begin{aligned}\label{prepo}
&\int_{B_{r_\e}(0)}2F_{2,\tau}(\hat{u}_\e)dx
\\&=\int_{\partial B_{r_\e}(0)}\Big[\Big(\nabla\hat{v}_\e\cdot\frac{x}{|x|}\Big)(\nabla\hat{v}_\e\cdot x)-\frac{|\nabla \hat{v}_\e|^2|x|}{2}+F_{2,\tau}(\hat{u}_\e)|x|-2m_{j,2}\frac{\nabla \hat{v}_\e\cdot x}{|x|}\Big]d\sigma.
\end{aligned}
\end{equation}
Hence (\ref{claimbounderycond2}) and (\ref{forfutu}) together imply
\begin{equation}
\begin{aligned}\label{poeqnew}
&\lim_{\e\to0}\int_{B_{r_\e}(0)}2F_{2,\tau}(\hat{u}_\e)dx
\\&=\lim_{\e\to0}\int_{\partial B_1(0)}\Big[\Big(\nabla v_\e\cdot\frac{x}{|x|}\Big)(\nabla v_\e\cdot x)-\frac{|\nabla v_\e|^2|x|}{2}+r_\e^2F_{2,\tau}(\hat{u}_\e(r_\e x))-2m_{j,2}\frac{\nabla v_\e\cdot x}{|x|}\Big]d\sigma
\\&=\int_{\partial B_1(0)}\Big\{\frac{(\nabla v\cdot x-2d)^2}{|x|}-\Big(\nabla v-\frac{2dx}{|x|^2}\Big)^2\frac{|x|}{2}
-\frac{2m_{j,2}(\nabla v\cdot x-2d)}{|x|}\Big\}d\sigma\\&=4\pi d(d+2m_{j,2}),
\end{aligned}
\end{equation}
and we complete the proof of our claim (\ref{poeqnewclaim}). 
At this point, in view of (\ref{anotherclaim}), (\ref{new}) and (\ref{poeqnew}), we see that
 \begin{equation*}\begin{aligned}
4\pi d
 &=\lim_{\eta\to0}\lim_{\e\to0}\int_{B_{r_\e}(0)\setminus B_\eta(0)}f_\tau(\hat{u}_\e)dx
=\lim_{\eta\to0}\lim_{\e\to0}\int_{B_{r_\e}(0)\setminus B_\eta(0)}\frac{e^{\hat{u}_\e}}{\tau^3}dx
  \\&=\lim_{\eta\to0}\lim_{\e\to0}\int_{B_{r_\e}(0)\setminus B_\eta(0)}F_{2,\tau}(\hat{u}_\e)dx
  =2\pi d(d+2m_{j,2}),
 \end{aligned}\end{equation*}
 which implies $d+2m_{j,2}=2$.
 Since $d>0$ can be chosen arbitrarily, we obtain a contradiction which concludes
the proof of $m_{j,2}+\beta_{j,2}=0$ under the assumption (\ref{as1}).
 \end{proof}
\begin{remark}
It turns out that Lemma \ref{minusinftyintheregion} and Lemma \ref{mplusbetaeq0} yield
the following result. Suppose that $u_\e-2\ln\e$ is uniformly bounded in any compact subset
of $\Omega\setminus Z_2$ and $u_\e-2\ln\e$ converges to $w$ in $C^2_\textrm{loc}(\Omega\setminus Z_2)$ as $\e\to0$, then $0\le m_{j,2}<1$ for $1\le j\le d_2$, $N_1>N_2$, and
$w$ satisfies
$$\Delta w+\frac{e^w}{\tau^3}= 4\pi\sum^{d_{1}}_{j=1}m_{j,1}\delta_{p_{j,1}}-4\pi\sum^{d_{2}}_{j=1}m_{j,2}\delta_{p_{j,2}}
\ \textrm{on}\ \Omega.$$
During the preparation of our paper, we was informed by professors Choe and Han that they have proved a similar result,
see \cite{CH}.
\end{remark}

At this point, we are ready to prove one part of  Theorem \ref{stable}.

{\bf  Proof for Theorem \ref{stable}: stable solution $\Rightarrow$ topological solution}

To prove our theorem,  we consider the following  cases.

\textit{Case 1.} If $\tau=1$, then  Lemma \ref{minusinftyintheregion} and Lemma \ref{mplusbetaeq0}
together imply that for any $r, \eta>0$,
\begin{equation}\label{mplbetaeazero}
\begin{aligned}
\left\{
 \begin{array}{ll}
\lim_{\e\to0}\Big(\sup_{\eta\le|x-p_{j,2}|\le\frac{r}{\e}}\hat{u}_\e(x)\Big)=\lim_{\e\to0}\Big(\sup_{\eta\le|x-p_{j,2}|\le\frac{r}{\e}}u_\e(\e x)\Big)=-\infty,
\\ m_{j,2}+\beta_{j,2}=0\ \ \textrm{for all} \ \ 1\le j\le d_2.
 \end{array}\right.
\end{aligned}
\end{equation}
By using (\ref{mplbetaeazero}) and Lemma \ref{intfandcapf}, we see that
 \begin{equation}\begin{aligned}\label{as3}
&\lim_{r,\eta\to0}\Big(\lim_{\e\to0}\int_{B_{\frac{r}{\e}}(p_{j,2})\setminus B_{\eta}(p_{j,2})}f'_\tau(\hat{u}_\e)dx\Big)
=\lim_{r,\eta\to0}\Big(\lim_{\e\to0}\int_{B_{\frac{r}{\e}}(p_{j,2})\setminus B_{\eta}(p_{j,2})}\frac{e^{\hat{u}_\e}}{\tau^3}dx\Big)
\\&=\lim_{r,\eta\to0}\Big(\lim_{\e\to0}\int_{B_{\frac{r}{\e}}(p_{j,2})\setminus B_{\eta}(p_{j,2})}f_\tau(\hat{u}_\e)dx\Big)=-4\pi(m_{j,2}+\beta_{j,2})=0.
 \end{aligned}\end{equation}

If  $\lim_{\e\to0}\Big(\sup_{\Omega\setminus\cup_{j}(B_r(p_{j,2}))}w_\e\Big)=-\infty\ \ \textrm{for any small}\ \ r>0$,
 then in view of (\ref{inftyftn}) and (\ref{mplbetaeazero}), we see that
\begin{equation*}\Delta g=4\pi\sum^{d_{1}}_{j=1}m_{j,1}\delta_{p_{j,1}}-4\pi\sum^{d_{2}}_{j=1}m_{j,2}\delta_{p_{j,2}}
\ \textrm{on}\ \Omega,
\end{equation*}
thus $N_1=N_2$ which  contradicts (H1).

On the other hand, if  $w_\e\to w\ \ \textrm{in}\ \ C^2_{\textrm{loc}}(\Omega\setminus Z_2)$, then
in view of (\ref{nextforeigenvalue}) and (\ref{as3}), we see that
\begin{equation*}
\begin{aligned}
\lim_{\e\to0}\mu_\e&\le\frac{1}{|\Omega|^2}\lim_{\e\to0}\int_\Omega -\frac{f'_\tau(u_\e)}{\e^2}dx
\\&=\frac{1}{|\Omega|^2}\lim_{r\to0}\lim_{\e\to0}\Big[\int_{\Omega\setminus\cup_{j}(B_r(p_{j,2}))}\frac{e^{w_\e}(-\tau+2(\tau+1)\e^2e^{w_\e}-\e^4e^{2w_\e})}{(\tau+\e^2e^{w_\e})^4}dx
\\&-\sum_{j=1}^{d_2}\int_{B_{\frac{r}{\e}}(p_{j,2})}f'_\tau(\hat{u}_\e)dx\Big]
=-\frac{1}{|\Omega|^2}\int_{\Omega}\frac{e^{w}}{\tau^3}dx<0,
\end{aligned}
\end{equation*}
which implies  $\mu_\e<0$ for small $\e>0$. Then $u_\e$ cannot be a stable solution of (\ref{maineq})
which is once more a contradiction.

\textit{Case 2.} If $N_2>N_1$ then, in view of (H2), we have $m_{j,2}\in[0,1]$ for all $1\le j\le d_2$.
Then by using Lemma \ref{minusinftyintheregion} and Lemma \ref{mplusbetaeq0} we obtain
(\ref{mplbetaeazero}). By using the same arguments as in \textit{Case 1}, we can prove that
$u_\e$ cannot be stable solution of (\ref{maineq}). We skip the details of this part to avoid repetitions.

\textit{Case 3.} If $N_2<N_1$, then we define the following set
$$J_0\equiv\{l\ |\ 1\le l\le d_2,  \ \  \lim_{\e\to0}\Big(\sup_{|x-p_{l,2}|=\eta}\hat{u}_\e(x)\Big)=-\infty\ \}.$$
If $J_0=\{1,...,d_2\}$, then the desired conclusion will follow by the same
argument adopted in \textit{Case 1}.\\
Therefore we suppose that $J_0\neq\{1,...,d_2\}$ and define  $J_1\equiv\{1,...,d_2\}\setminus J_0\neq \emptyset$.
By using Lemma \ref{minusinftyintheregion}, we see  that $\lim_{\e\to0}\Big(\sup_{\Omega\setminus\cup_{j}(B_r(p_{j,2}))}w_\e\Big)=-\infty\ \ \textrm{for any small}\ \ r>0$.
Then by (\ref{inftyftn}), we have
$$N_2=\sum_{j\in J_0}m_{j,2}+\sum_{j\in J_1}m_{j,2}<N_1=\sum_{j\in J_0}(-\beta_{j,2})+\sum_{j\in J_1}(-\beta_{j,2}).$$
By using Lemma \ref{mplusbetaeq0}, we see that there exists $j_0\in J_1$ such that
$$m_{j_0,2}<-\beta_{j_0,2}.$$
For the sake of simplicity, we assume that $p_{j_0,2}=0$.
In view  of Lemma \ref{intfandcapf}, we see that
\begin{equation*}
\lim_{r\to0}\lim_{\e\to0}\int_{B_{\frac{r}{\e}}(0)}f_\tau(\hat{u}_\e)dx
=-4\pi(m_{j_0,2}+\beta_{j_0,2})>0.
\end{equation*}
Since $j_0\in J_1$, the same argument adopted in the proof of Lemma \ref{minusinftyintheregion}
shows that there exists a function $\hat{u}$ such that $\hat{u}_\e\to\hat{u}$ in
$C^2_{\textrm{loc}}(\RN\setminus\{0\})$ and
\begin{equation*}
\begin{aligned}
\left\{
\begin{array}{ll}
\Delta\hat{u}+f_\tau(\hat{u})=-4\pi m_{j_0,2}\delta_0\ \ \ \textrm{on}  \ \ \RN,
 \\
 \\ \lim_{|x|\to\infty}\hat{u}(x)=-\infty,
 \\
 \\ f_\tau(\hat{u})\in L^1(\RN),\ \ e^{\hat{u}}\in L^1(\RN\setminus B_1(0)).
 \end{array}\right.
\end{aligned}
\end{equation*}
 Let \begin{equation}\label{hatbeta}\hat{\beta}=\frac{1}{2\pi}\intr f_\tau(\hat{u})dx.\end{equation}
Then we conclude that
\begin{equation}\label{limlnminustwo}
\lim_{|x|\to\infty}\frac{\hat{u}(x)}{\ln|x|}=-2m_{j_0,2}-\hat{\beta}<-2.
\end{equation}
Moreover,  by using (\ref{oriassm}) and (\ref{limlnminustwo}), then the similar argument adopted in Step 1
in the proof of Lemma \ref{mplusbetaeq0} shows that there exist $\nu$ and $R_0>0$ such that
\begin{equation}
\begin{aligned}\label{strictlylessinftylater}
  \hat{u}_\e<-\nu \ \ \textrm{on}\ \  B_{\frac{r}{\e}}(0)\setminus B_{R_0}(0).
\end{aligned}
\end{equation}
Let $\bar{\hat{u}}_\e(\rho)\equiv\frac{1}{2\pi \rho}\int_{\partial B_\rho(0)}\hat{u}_\e d\sigma$. Then $\bar{\hat{u}}_\e$ satisfies
\begin{equation}\label{eq}\rho\frac{d\bar{\hat{u}}_\e}{d\rho}+\frac{1}{2\pi}\int_{B_\rho(0)}f_\tau(\hat{u}_\e)dx=-2m_{j_0,2}.
\end{equation}
Then  (\ref{hatbeta}), (\ref{limlnminustwo}), (\ref{strictlylessinftylater}) and (\ref{eq}) together imply that there exists $\sigma>0$ such that for large $\rho>0$,
\begin{equation}\label{ineq}\rho\frac{d\bar{\hat{u}}_\e}{d\rho}\le -(2+\sigma).\end{equation}
We claim that there exists a constant $C>0$ such that
\begin{equation}\label{difffinite}
|\hat{u}_\e(x)-\bar{\hat{u}}_\e(|x|)|\le C\ \  \textrm{for}\ \ x\in B_{\frac{r}{\e}}(0)\setminus B_{R_0}(0).
\end{equation}
Indeed, since $j_0\in J_1$ and in view of (\ref{prove}), we see that  $\{\hat{u}_\e\}$ is uniformly
bounded in $L^\infty_{\textrm{loc}}(B_{\frac{r}{\e}}(0)\setminus\{0\})$.  Then we have
\begin{equation*} \lim_{\e\to0}\Big(\sup_{x\in \partial B_{\frac{r}{\e}}(0)\cup\partial B_{R_0}(0)}e^{\hat{u}_\e(x)}|x|^2\Big)<+\infty.
\end{equation*}
Then, by using (\ref{strictlylessinftylater}) and the similar argument adopted in Step 3
in the proof of Lemma \ref{mplusbetaeq0}, we conclude that
\begin{equation}\label{claimbounderycond2n} \lim_{\e\to0}\Big(\sup_{x\in B_{\frac{r}{\e}}(0)\setminus B_{R_0}(0)}e^{\hat{u}_\e(x)}|x|^2\Big)<+\infty.
\end{equation}
Moreover, by using the Green's representation formula for a solution $u_\e$ of (\ref{maineq}) and by arguing as in the proof of (\ref{difflessfi}),  we obtain (\ref{difffinite}).
In view of (\ref{ineq}) and (\ref{difffinite}) we can find a constant $c>0$ such that
$$\lim_{\e\to0}\int_{B_{\frac{r}{\e}}(0)\setminus B_R(0)}f_\tau(\hat{u}_\e)dx\le cR^{-\sigma}.$$
Now we see that
\begin{equation*}
\begin{aligned}
2\pi\hat{\beta}&=\lim_{R\to\infty}\int_{|x|\le R}f_\tau(\hat{u})dx
=\lim_{R\to\infty}\lim_{\e\to0}\int_{|x|\le R}f_\tau(\hat{u}_\e)dx
\\&=\lim_{R\to\infty}\lim_{\e\to0}\Big(\int_{|x|\le \frac{r}{\e}}f_\tau(\hat{u}_\e)dx-\int_{B_{\frac{r}{\e}}(0)\setminus B_R(0)}f_\tau(\hat{u}_\e)dx\Big)
\\&=-4\pi(m_{j_0,2}+\beta_{j_0,2})>0.
\end{aligned}
\end{equation*}
Moreover, the method of moving planes to be used together with (\ref{limlnminustwo}) shows
that $\hat{u}$ is radially symmetric (see  \cite{ChL, GNN}).
Now by using  Theorem 3.4 in \cite{CHLL} and $\hat{\beta}>0$, we see  that  $\hat{u}$ cannot be stable solution.

 At this point, we complete the proof of one part of Theorem \ref{stable}: stable solution $\Rightarrow$ topological solution under the assumptions (H1-2).  \hfill$\square$

\section{Proof of Theorem \ref{stable}: topological solution $\Rightarrow$ strictly stable solution}
In this section, we prove the other implication in the statement of Theorem \ref{stable}, that is,
topological solution $\Rightarrow$ strictly stable solution.
We assume that  $u_\e$ is a sequence of topological solutions of (\ref{maineq}) with a sequence $\e>0$.
Although we use arguments similar to those in \cite{T1},
we still need to carry out a subtle analysis to control the solution's sign changes.

\begin{lemma}\label{speedofconvergence}
Let $u_\e$ be a sequence of topological solutions of (\ref{maineq}) with $\e>0$.  Then, as $\e\to0$, we have

$(i)$ $u_\e\to0$ in $C^m_{\textrm{loc}}(\Omega\setminus Z)$ for any $m\in \mathbb{Z}^+$ and faster than any  power of $\e$;

$(ii)$   $\frac{(1-e^{u_\e})^2}{\e^2(\tau+e^{u_\e})^2}\to4(\tau+1)\pi \sum_{i=1,2}\sum^{d_{i}}_{j=1}m^2_{j,i}\delta_{p_{j,i}}$, weakly in the sense of measures in $\Omega$.
\end{lemma}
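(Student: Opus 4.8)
The plan is to prove the two assertions separately: first I upgrade the almost-everywhere convergence of a topological solution to fast $C^m_{\mathrm{loc}}$ convergence, and then I use $(i)$ to reduce the measure statement $(ii)$ to a local Pohozaev computation at each vortex, built on the antiderivative $F_{1,\tau}$.

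\textbf{Part $(i)$.} Since $u_\e\to0$ a.e., alternatives $(b)$ and $(c)$ of Theorem \ref{BrezisMerletypealternatives} are excluded, so $(a)$ holds and $u_\e\to0$ uniformly on every compact $K\subset\Omega\setminus Z$. The decisive structural fact is $f_\tau'(0)=-\frac{1}{(\tau+1)^3}<0$. Fix $x_0$ with $\overline{B_{2d}(x_0)}\subset\Omega\setminus Z$; on this ball $(\ref{maineq})$ reads $\Delta u_\e=\frace\,c_\e(x)\,u_\e$ with $c_\e:=-f_\tau(u_\e)/u_\e\to -f_\tau'(0)=\frac{1}{(\tau+1)^3}$ uniformly, so $c_\e\ge c_0>0$ for small $\e$. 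I would then compare $u_\e$ with the radial solution $\psi_\e$ of $\Delta\psi_\e=\frac{c_0}{\e^2}\psi_\e$ on $B_{2d}(x_0)$ (a rescaled modified Bessel $I_0$) equal to $M_\e:=\sup_{\overline{B_{2d}}}|u_\e|$ on $\partial B_{2d}$. Because $c_0\le c_\e$ and $\psi_\e\ge0$, $\psi_\e$ is a supersolution of $\Delta-\frace c_\e$, and the maximum principle (zeroth-order coefficient $\le0$) gives $\pm u_\e\le\psi_\e$; since $I_0$ grows exponentially, $\|u_\e\|_{L^\infty(B_d(x_0))}\le Ce^{-\sqrt{c_0}\,d/\e}$, which is faster than any power of $\e$. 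The $C^m$ statement with the same rate follows by interior Schauder estimates for $\Delta u_\e=-\frace f_\tau(u_\e)$, as multiplying an exponentially small quantity by $\varepsilon^{-2}$ preserves the property of being $o(\e^N)$ for every $N$.

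\textbf{Part $(ii)$, reduction.} The key algebraic observation is that $F_{1,\tau}'=f_\tau$ and $-2(\tau+1)F_{1,\tau}(u)=\frac{(1-e^{u})^2}{(\tau+e^{u})^2}$, so the measure in question equals $-\frac{2(\tau+1)}{\e^2}F_{1,\tau}(u_\e)$. As $F_{1,\tau}(0)=0$ and $F_{1,\tau}(u)=O(u^2)$ near $0$, part $(i)$ yields $-\frace F_{1,\tau}(u_\e)\to0$ uniformly on compacts of $\Omega\setminus Z$, while the integrand is nonnegative. Hence every weak limit is a nonnegative combination of Dirac masses on $Z$, and it remains to compute the local mass $c_{j,i}=\lim_{r\to0}\lim_{\e\to0}\int_{B_r(p_{j,i})}\frac{(1-e^{u_\e})^2}{\e^2(\tau+e^{u_\e})^2}\,dx$ at each vortex. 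To do so, take $p_{j,i}=0$ (say a $Z_1$ vortex of strength $m=m_{j,1}$), write $u_\e=2m\ln|x|+w_\e$ with $w_\e\in C^{1,\alpha}(\overline{B_r})$ for fixed $\e$, multiply $\Delta u_\e+\frace f_\tau(u_\e)=0$ by $x\cdot\nabla u_\e$, and integrate over the annulus $B_r(0)\setminus B_\eta(0)$. Letting $\eta\to0$ at fixed $\e$, the bulk term produces $-\frac{2}{\e^2}\int_{B_r}F_{1,\tau}(u_\e)$; the inner-boundary terms are governed by $\partial_r u_\e\sim 2m/\eta$ and converge to $-8\pi m^2+4\pi m^2=-4\pi m^2$, while the $F_{1,\tau}$ boundary contribution is $O(\eta^2/\e^2)\to0$ since $F_{1,\tau}$ stays bounded; the outer terms $I_\e(r)$ on $\partial B_r$ involve $u_\e,\nabla u_\e,F_{1,\tau}(u_\e)$, all $o(\e^N)$ by $(i)$, so $I_\e(r)\to0$. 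This gives $-\frac{2}{\e^2}\int_{B_r}F_{1,\tau}(u_\e)=4\pi m^2-I_\e(r)$, whence $\int_{B_r}\frac{(1-e^{u_\e})^2}{\e^2(\tau+e^{u_\e})^2}\,dx=(\tau+1)\bigl(4\pi m^2-I_\e(r)\bigr)\to 4(\tau+1)\pi m_{j,1}^2$. The $Z_2$ case is identical (only $(\partial_r u_\e)^2\sim 4m_{j,2}^2/\eta^2$ enters), or follows from $f_\tau(u)=-f_{\tau^{-1}}(-u)/\tau^3$. Summing over the vortices gives the asserted weak limit.

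\textbf{Main obstacle.} Part $(i)$ should be essentially routine once the sign $f_\tau'(0)<0$ is exploited. The delicate point is the vortex computation in $(ii)$: recognizing $\mu_\e$ as $-\frac{2(\tau+1)}{\e^2}F_{1,\tau}(u_\e)$, respecting the order of limits ($\eta\to0$ before $\e\to0$, then $r\to0$), and rigorously verifying that the singular inner-boundary terms converge to $-4\pi m^2$ while both the $F_{1,\tau}$ boundary term and the outer $\partial B_r$ terms vanish. This requires combining the expansion $u_\e=2m\ln|x|+w_\e$ with $w_\e\in C^{1,\alpha}$ and the fast decay from $(i)$; Lemma \ref{bddofintegration} then guarantees that the resulting total mass is finite, so the weak limit is a genuine finite measure.
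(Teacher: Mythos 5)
Your proposal is correct, and the two parts deserve separate comments. For part $(ii)$ you follow essentially the paper's route: the paper also reduces the measure statement to the identity $\frac{(1-e^{u})^2}{(\tau+e^{u})^2}=-2(\tau+1)F_{1,\tau}(u)$ and computes the local mass at each vortex by the Pohozaev identity; it merely organizes the bookkeeping differently, writing the identity on all of $B_r$ for the regular part $v_\e=u_\e+(-1)^i2m_{j,i}\ln|x|$ (so the vortex contribution appears as the bulk term $\frac{(-1)^{i-1}2m_{j,i}}{\e^2}\int_{B_r}f_\tau(u_\e)\to-8\pi m_{j,i}^2$ together with a $+4\pi m_{j,i}^2$ boundary term), whereas you work on the punctured ball and extract $-4\pi m^2$ from the inner boundary as $\eta\to0$; the two computations agree and give $\lim\int_{B_r}\frac{2F_{1,\tau}(u_\e)}{\e^2}=-4\pi m_{j,i}^2$. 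For part $(i)$ your argument is genuinely different: the paper proves that $|u_\e|^2$ and $|\nabla u_\e|^2$ are subharmonic on $\Omega\setminus Z$ for small $\e$, combines the mean value inequality with the energy identity (\ref{squareint}) to get $\sup|u_\e|^2\le C\e^2$, and then iterates a cutoff integration-by-parts estimate, gaining a factor $\e^2$ per step, which yields ``faster than any power'' but no explicit rate; your linear barrier comparison with the rescaled modified Bessel function, exploiting $f_\tau'(0)=-\frac{1}{(\tau+1)^3}<0$ and the sign structure of $-f_\tau(u)/u$ near $u=0$, directly yields the stronger exponential bound $\|u_\e\|_{L^\infty(B_d)}\le Ce^{-\sqrt{c_0}d/\e}$ and is arguably cleaner; both are standard and valid, and your Schauder bootstrap for the $C^m$ rate is fine since dividing an exponentially small quantity by $\e^2$ preserves $o(\e^N)$.

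One small inaccuracy, not a gap: you invoke Lemma \ref{bddofintegration} to guarantee that the total mass of $\frac{(1-e^{u_\e})^2}{\e^2(\tau+e^{u_\e})^2}dx$ is finite, but that lemma bounds $\frac{1}{\e^2}\int|f_\tau(u_\e)|=\frac{1}{\e^2}\int\frac{e^{u_\e}|1-e^{u_\e}|}{(\tau+e^{u_\e})^3}$, which carries an extra factor of $e^{u_\e}$ that degenerates near the $Z_1$ vortices, so it does not directly control your measure there. This does not matter: the uniform bound on the total mass (hence tightness and the weak convergence) already follows from your own local Pohozaev computation at each vortex combined with the uniform vanishing of the density on compact subsets of $\Omega\setminus Z$ supplied by part $(i)$, so you can simply drop the reference.
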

\begin{proof} Let $\Omega_\delta\equiv\{x\in\Omega\ |\ \textrm{dist}(x,Z)\ge\delta\ \}$.
In view of Theorem \ref{BrezisMerletypealternatives} we have $u_\e\to 0$ uniformly on
any compact subset of $\Omega\setminus Z$ as $\e\to0$.
Then  we see that for any small $\delta>0$,
\begin{equation}\begin{aligned}\label{uesq}
\Delta(|u_\e|^2)&=2|\nabla u_\e|^2+2u_\e\Delta u_\e\\&=2|\nabla u_\e|^2+\frac{2u_\e e^{u_\e}(e^{u_\e}-1)}{\e^2(\tau+e^{u_\e})^3}\ge0\ \ \textrm{on}\ \ \Omega_\delta,
\end{aligned}\end{equation}
since $t(e^t-1)\ge0$ for any $t\in\R$.
Moreover, we see that
\begin{equation}\begin{aligned}\label{nablauesq}
\Delta(|\nabla u_\e|^2)&=\sum_{i,j=1}^22\Big|\frac{\partial^2 u_\e}{\partial x_i\partial x_j}\Big|^2+\frac{2|\nabla u_\e|^2e^{u_\e}(-e^{2u_\e}+2(\tau+1)e^{u_\e}-\tau)}{\e^2(\tau+e^{u_\e})^4}
\\&\ge\frac{2|\nabla u_\e|^2e^{u_\e}(\tau+1+o(1))}{\e^2(\tau+e^{u_\e})^4}\ge0\ \ \textrm{on}\ \ \Omega_\delta\ \ \textrm{as}\ \ \e\to0.
\end{aligned}\end{equation}
We have the following inequality,
\begin{equation}\label{ele}
\frac{|t|}{1+|t|}\le|1-e^t|\ \ \textrm{for any }\ t\in\R.
\end{equation}
By using  (\ref{squareint}), (\ref{uesq}), (\ref{ele}), and the mean value theorem,
we see that there exists a constant $c>0$ such that
\begin{equation}
\begin{aligned}\label{l1}
\sup_{\Omega_{2\delta}}(|u_\e|^2)&\le\frac{1}{|\Omega_\delta|}\int_{\Omega_\delta}|u_\e|^2dx
\\&\le\frac{(1+\|u_\e\|_{L^\infty(\Omega_\delta)})^2}{|\Omega_\delta|}\int_{\Omega_\delta}\frac{|u_\e|^2}{(1+|u_\e|)^2}dx
\\&\le\frac{(1+\|u_\e\|_{L^\infty(\Omega_\delta)})^2}{|\Omega_\delta|}\Big\|\frac{(\tau+e^{u_\e})^4}{e^{u_\e}}\Big\|_{L^\infty(\Omega_\delta)}\int_{\Omega_\delta}\frac{e^{u_\e}(1-e^{u_\e})^2}{(\tau+e^{u_\e})^4}dx
\\&\le\frac{c\e^2(1+\|u_\e\|_{L^\infty(\Omega_\delta)})^2}{|\Omega_\delta|}\Big\|\frac{(\tau+e^{u_\e})^4}{e^{u_\e}}\Big\|_{L^\infty(\Omega_\delta)},
\end{aligned}
\end{equation}
for small $\e>0$.
In view of (\ref{squareint}), (\ref{nablauesq}), and the mean value theorem,  we can find
a constant $C>0$ such that
 \begin{equation}
 \begin{aligned}\label{version2gradbdd}
\sup_{\Omega_{2\delta}}(|\nabla u_\e|^2)&\le\frac{1}{|\Omega_\delta|}\int_{\Omega_\delta}|\nabla u_\e|^2dx
\\&\le\frac{1}{|\Omega_\delta|}\Big\|\frac{(\tau+e^{u_\e})^2}{e^{u_\e}}\Big\|_{L^\infty(\Omega_\delta)}\int_{\Omega}\frac{|\nabla u_\e|^2e^{u_\e}}{(\tau+e^{u_\e})^2}dx
\\&\le \frac{C}{|\Omega_\delta|}\Big\|\frac{(\tau+e^{u_\e})^2}{e^{u_\e}}\Big\|_{L^\infty(\Omega_\delta)},
 \end{aligned}
 \end{equation}
for small $\e>0$.
Let $\phi\in C^\infty(\overline{\Omega})$ be such that $\phi=0$ in $\{x\in\Omega\ |\ \textrm{dist}(x,Z)\le\delta\ \}$,
$\phi=1$ in $\Omega_{2\delta}$ and $0\le\phi\le1$.
Since $u_\e\to 0$ uniformly on any compact subset of $\Omega\setminus Z$ as $\e\to0$, we note that there exists some constant $C_\delta>0$, independent of $\e>0$, such that
\begin{equation}\label{explinear}
\Big|\frac{1-e^{u_\e}}{\tau+e^{u_\e}}\Big|\le C_\delta|u_\e|\ \ \textrm{on}\ \ \Omega_\delta.
\end{equation}
Next, by using (\ref{l1}), (\ref{version2gradbdd}) and (\ref{explinear}), we conclude that
\begin{equation}
\begin{aligned}\label{usingl1}
&\frac{1}{\e^2}\int_{\Omega_{2\delta}}\frac{e^{u_\e}(1-e^{u_\e})^2}{(\tau+e^{u_\e})^4}dx
\\&\le\frac{1}{\e^2}\int_{\Omega}\frac{e^{u_\e}(e^{u_\e}-1)}{(\tau+e^{u_\e})^3}\Big[\frac{(e^{u_\e}-1)\phi}{(\tau+e^{u_\e})}\Big] dx
\\&=\int_\Omega\Delta u_\e\Big[\frac{(e^{u_\e}-1)\phi}{(\tau+e^{u_\e})}\Big]dx
=\int_\Omega u_\e\Delta\Big[\frac{(e^{u_\e}-1)\phi}{(\tau+e^{u_\e})}\Big] dx
\\&=\int_\Omega u_\e\Big[\Delta\Big(\frac{e^{u_\e}-1}{\tau+e^{u_\e}}\Big)\phi  +\frac{2(\tau+1)e^{u_\e}\nabla u_\e\cdot\nabla\phi}{(\tau+e^{u_\e})^2}+\frac{(e^{u_\e}-1)\Delta\phi}{(\tau+e^{u_\e})}\Big] dx
\\&=\int_\Omega \Big[\frac{-(\tau+1)e^{u_\e}|\nabla u_\e|^2\phi}{(\tau+e^{u_\e})^2}+\frac{(\tau+1)e^{u_\e}u_\e\nabla u_\e\cdot\nabla\phi}{(\tau+e^{u_\e})^2}
 +\frac{(e^{u_\e}-1)u_\e\Delta\phi}{(\tau+e^{u_\e})}\Big] dx
\\&\le \int_\Omega \Big[\frac{(\tau+1)e^{u_\e}u_\e\nabla u_\e\cdot\nabla\phi}{(\tau+e^{u_\e})^2}
+\frac{(e^{u_\e}-1)u_\e\Delta\phi}{(\tau+e^{u_\e})}\Big] dx
\\&=\int_\Omega \Big[\frac{-(\tau+1)e^{u_\e}u_\e^2\Delta\phi}{2(\tau+e^{u_\e})^2}+\frac{(\tau+1)e^{u_\e}(e^{u_\e}-\tau)u_\e^2\nabla u_\e\cdot\nabla\phi}{2(\tau+e^{u_\e})^3}
+\frac{(e^{u_\e}-1)u_\e\Delta\phi}{(\tau+e^{u_\e})}\Big] dx
\\&\le c_\delta\|u_\e\|^2_{L^2(\Omega_\delta)}\le C_\delta\e^2,
\end{aligned}
\end{equation}
for some constants $c_\delta,\ C_\delta>0$.
By a suitable iteration of (\ref{l1}), (\ref{usingl1}), and the elliptic estimates, we deduce that $(i)$ holds.
In other words,
for any small $\delta>0$  and any $m, n \in \mathbb{Z}^+$,  there exists a constant $c_{\delta,m,n}>0$ such that
\begin{equation}\label{conto0}\sup_{\Omega_{2\delta}}\Big(\sum_{|\alpha|=0}^m|D^\alpha u_\e|\Big)\le c_{\delta,m,n}\e^n.
\end{equation}

Moreover, we see that $v_\e(x)=u_\e(x)+(-1)^i2m_{j,i}\ln|x-p_{j,i}|$ satisfies
\begin{equation}\label{maineqq}
\Delta v_\e+\frac{f_\tau(u_\e)}{\e^2}=0\ \ \textrm{on}\ \ B_r(p_{j,i}).
\end{equation}
For the sake of simplicity, we assume that $p_{j,i}=0$.
Multiplying (\ref{maineqq}) by $\nabla u_\e\cdot x$ and integrating over $B_r(0)$, we obtain the Pohozaev
type identity
\begin{equation*}
\begin{aligned}
&\int_{\partial B_r(0)}\Big[\Big(\nabla v_\e\cdot\frac{x}{|x|}\Big)(\nabla v_\e\cdot x)-\frac{|\nabla v_\e|^2}{2}|x|+\frac{1}{\e^2}F_{1,\tau}(u_\e)|x|\Big]d\sigma
\\&=\int_{B_r(0)}\frac{2 F_{1,\tau}(u_\e)}{\e^2}+\frac{(-1)^{i-1}2m_{j,i}f_\tau(u_\e)}{\e^2}dx,
\end{aligned}
\end{equation*}
where $F_{1,\tau}(u)=\frac{-(1-e^u)^2}{2(\tau+1)(\tau+e^u)^2}$.
By using (\ref{conto0}), we have  $$\lim_{\e\to0}\int_{\partial B_r(0)}\frac{1}{\e^2}F_{1,\tau}(u_\e)|x|d\sigma=0,$$
thus $$\lim_{\e\to0}\int_{B_r(0)}\frac{2 F_{1,\tau}(u_\e)}{\e^2}dx=-4\pi m^2_{j,i},$$
$$\lim_{\e\to0}\int_{B_r(0)}\frac{(1-e^{u_\e})^2}{\e^2(\tau+e^{u_\e})^2}dx=4(\tau+1)\pi m^2_{j,i},$$
for any small $r>0$ which concludes the proof of our lemma.
 \end{proof}
For a solution  $u_\e$ of (\ref{maineq}), let
\begin{equation}\label{inf}
\mu_\e\equiv\inf_{\phi\in W^{1,2}(\Omega)\setminus\{0\}}
\frac{\int_{\Omega}|\nabla\phi|^2-\frac{1}{\e^2}f'_\tau(u_\e)\phi^2dx}{\|\phi\|^2_{L^2(\Omega)}},
\end{equation}
and $\phi_\e$ be the corresponding first eigenfunction with
$\phi_\e>0$ in $\Omega$ and $\|\phi_\e\|_{L^2(\Omega)}=1$,
\begin{equation}\label{mue}
\mu_\e=\int_{\Omega}|\nabla\phi_\e|^2-\frac{1}{\e^2}f'_\tau(u_\e)\phi_\e^2dx,
\end{equation}
and
\begin{equation}\label{phieeq}
-\Delta\phi_\e-\frac{1}{\e^2}f'_\tau(u_\e)\phi_\e=\mu_\e\phi_\e.
\end{equation}
We note that $\e^2\mu_\e$ is bounded from below:
$$\e^2\mu_\e\ge-\int_\Omega f'_\tau(u_\e)\phi_\e^2dx\ge-\sup_{t\in\R}|f'_\tau(t)|.$$
To prove    Theorem \ref{stable}, we argue by contradiction and suppose that, along a subsequence (still
denoted in the same way), we have a sequence of topological solutions $u_\e$ of (\ref{maineq})  with a sequence $\e>0$ such that
\begin{equation}\label{lesszero}
\lim_{\e\to0}\e^2\mu_\e=\mu_0\le0.
\end{equation}
In  view of $(i)$  of Lemma \ref{speedofconvergence} and (\ref{lesszero}), we have the following lemma.
\begin{lemma}\label{forblowup}
 There exists $p_{j_0,i_0}\in Z$ and $r_0>0$ such that for any $r\in(0,r_0)$, there exists a constant $a_r>0$ such that
$$\lim_{\e\to0}\int_{B_r(p_{j_0,i_0})}\phi_\e^2dx\ge a_r.$$
\end{lemma}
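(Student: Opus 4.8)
The plan is to argue by contradiction: assuming that near every vortex point the $L^2$-mass of $\phi_\e$ escapes from small balls, I will show that $\liminf_{\e\to0}\e^2\mu_\e>0$, contradicting $(\ref{lesszero})$. The fact that drives everything is the sign of the potential away from the vortices. A direct computation from the explicit formula for $f'_\tau$ gives
\begin{equation*}
f'_\tau(0)=\frac{1\cdot(\tau-2(\tau+1)+1)}{(\tau+1)^4}=-\frac{1}{(\tau+1)^3}<0.
\end{equation*}
Since $(i)$ of Lemma \ref{speedofconvergence} yields $u_\e\to0$ uniformly on $\Omega_\delta=\{x\in\Omega\ |\ \textrm{dist}(x,Z)\ge\delta\}$, the continuity of $f'_\tau$ shows that for each fixed small $\delta>0$ there is a constant $c_0>0$ (one may take $c_0=\frac{1}{2(\tau+1)^3}$), independent of $\delta$, such that $-f'_\tau(u_\e)\ge c_0$ on $\Omega_\delta$ for all sufficiently small $\e>0$. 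In other words, the potential $-\frac{1}{\e^2}f'_\tau(u_\e)$ is positive and of order $\e^{-2}$ in the region away from the vortices, so it can only lower the eigenvalue where $\phi_\e$ lives near $Z$.

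Next I would multiply the identity $(\ref{mue})$ by $\e^2$ and split $\int_\Omega f'_\tau(u_\e)\phi_\e^2\,dx$ into the part over $\Omega_\delta$ and the parts over the balls $B_\delta(p_{j,i})$. Writing $m_\e(\delta)\equiv\sum_{i,j}\int_{B_\delta(p_{j,i})}\phi_\e^2\,dx$ and using $\|\phi_\e\|_{L^2(\Omega)}=1$, $\e^2\int_\Omega|\nabla\phi_\e|^2\ge0$, the lower bound $-f'_\tau(u_\e)\ge c_0$ on $\Omega_\delta$, and the crude bound $|f'_\tau|\le M'\equiv\sup_{t\in\R}|f'_\tau(t)|<\infty$ on the vortex balls, one obtains
\begin{equation*}
\e^2\mu_\e\ge c_0\bigl(1-m_\e(\delta)\bigr)-M'm_\e(\delta)=c_0-(c_0+M')m_\e(\delta)
\end{equation*}
for all sufficiently small $\e$. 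Taking $\liminf_{\e\to0}$ and recalling $(\ref{lesszero})$ then forces
\begin{equation*}
\limsup_{\e\to0}m_\e(\delta)\ge\frac{c_0-\mu_0}{c_0+M'}\ge\frac{c_0}{c_0+M'}=:2b>0,
\end{equation*}
and this lower bound holds for every small $\delta>0$, with $b>0$ independent of $\delta$.

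It remains to localize this positive mass at a single vortex, which I regard as the only delicate point. Since $Z$ is finite, subadditivity of $\limsup$ applied to $m_\e(\delta)=\sum_{i,j}\int_{B_\delta(p_{j,i})}\phi_\e^2\,dx$ shows that for each $\delta$ some vortex, say $p_{k(\delta)}$, satisfies $\limsup_{\e\to0}\int_{B_\delta(p_{k(\delta)})}\phi_\e^2\,dx\ge 2b/\#Z$. Choosing $\delta=\delta_m\to0$ and using that there are only finitely many vortices, a pigeonhole argument produces a single point $p_{j_0,i_0}\in Z$ for which this inequality holds along a sequence $\delta_m\to0$; since $r\mapsto\int_{B_r(p_{j_0,i_0})}\phi_\e^2\,dx$ is nondecreasing, the same lower bound then holds for every $r>0$. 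Passing to a further subsequence in $\e$ (harmless, since the whole argument runs along subsequences) upgrades the $\limsup$ to a limit, and the claim follows with $r_0$ arbitrary and $a_r=b/\#Z$ for all $r\in(0,r_0)$. The main obstacle is precisely this passage from ``mass near the vortex set'' to ``mass near one fixed vortex''; the analytic heart of the proof, by contrast, is the elementary but decisive sign $f'_\tau(0)<0$, which makes the linearized operator coercive away from $Z$ and thereby rules out a non-positive eigenvalue unless $\phi_\e$ concentrates at a vortex.
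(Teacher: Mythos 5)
Your argument is correct and is essentially the paper's own proof of Lemma \ref{forblowup}: both rest on splitting $\int_\Omega f'_\tau(u_\e)\phi_\e^2\,dx$ into the vortex balls and their complement, on the uniform convergence $f'_\tau(u_\e)\to f'_\tau(0)=-\frac{1}{(\tau+1)^3}<0$ away from $Z$ supplied by Lemma \ref{speedofconvergence}$(i)$, and on playing the normalization $\|\phi_\e\|_{L^2(\Omega)}=1$ against $\mu_0\le0$. The paper runs this as a contradiction with the assumption that the total mass near $Z$ vanishes and leaves the pigeonhole localization to a single vortex and the uniformity in $r$ implicit, whereas you make those steps explicit; this is a difference of exposition, not of method.
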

\begin{proof}
Suppose that there exists a small $r>0$ such that
\begin{equation}\label{notpeak}
\lim_{\e\to0}\int_{\cup_{j,i}B_r(p_{j,i})}\phi_\e^2dx=0.
\end{equation}
Then
\begin{equation*}
\begin{aligned}
&\lim_{\e\to0}\Big|\int_{\cup_{j,i}B_r(p_{j,i})}f'_\tau(u_\e)\phi_\e^2dx\Big|
\le\sup_{t\in\R}|f'_\tau(t)|\lim_{\e\to0}\int_{\cup_{j,i}B_r(p_{j,i})}\phi_\e^2dx=0.
\end{aligned}
\end{equation*}
By using $(i)$ of Lemma \ref{speedofconvergence}, we see that
\begin{equation}
\begin{aligned}\label{outside2}
&\int_{\Omega\setminus\cup_{j,i}B_r(p_{j,i})}f'_\tau(u_\e)\phi_\e^2dx
\\&=\int_{\Omega\setminus\cup_{j,i}B_r(p_{j,i})}\Big(-\frac{1}{(\tau+1)^3}+o(1)\Big)\phi_\e^2dx\ \ \textrm{as}\ \e\to0.
\end{aligned}
\end{equation}
Next, by using (\ref{lesszero}), (\ref{notpeak}) and (\ref{outside2}), we see that
\begin{equation*}
\begin{aligned}
0&\ge\lim_{\e\to0}\e^2\mu_\e=\lim_{\e\to0}\int_{\Omega}\e^2|\nabla\phi_\e|^2-f'_\tau(u_\e)\phi_\e^2dx
\\&\ge\lim_{\e\to0}\int_{\Omega}-f'_\tau(u_\e)\phi_\e^2dx
=\frac{1}{(\tau+1)^3}.
\end{aligned}
\end{equation*}
This is the desired contradiction which concludes the proof of our lemma.
\end{proof}
Since $f_\tau(u)=-f_{\tau^{-1}}(-u)/\tau^3$,  we can assume without loss of generality
that $i_0=2$,  $p_{j_0,i_0}=0$, and $\nu\equiv m_{j_0,i_0}$ in Lemma \ref{forblowup}.
We consider the scaled function
\begin{equation}\label{hatu}
\hat{u}_\e(y)=u_\e(\e y)\ \textrm{in}\ B_{\frac{r_0}{\e}}(0).
\end{equation}
Then $\hat{u}_\e$ satisfies
$$\Delta \hat{u}_\e+ \frac{e^{\hat{u}_\e}(1-e^{\hat{u}_\e})}{(\tau+e^{\hat{u}_\e})^3}=-4\pi\nu\delta_0
\ \textrm{in}\ B_{\frac{r_0}{\e}}(0).$$
 Now we have the following lemma.
\begin{lemma}\label{linfty}
$\lim_{\e\to0}(\sup_{B_{\frac{r_0}{\e}}(0)}|\hat{u}_\e-u|)=0$, where $u$ is  a topological solution of
\begin{equation}\begin{aligned} \label{u}
\left \{
\begin{array}{ll}
\Delta u+ \frac{e^{u}(1-e^{u})}{(\tau+e^{u})^3}=-4\pi\nu\delta_0
\ \textrm{in}\ \RN,
\\ \sup_{\RN\setminus B_1(0)}|\nabla u|<+\infty,
\\ \frac{e^{u}(1-e^{u})}{(\tau+e^{u})^3},\ \frac{(1-e^{u})^2}{(\tau+e^{u})^2}\in L^1(\RN).
\end{array}\right.
\end{aligned}\end{equation}
\end{lemma}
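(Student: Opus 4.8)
The plan is to rescale at the concentration vortex, extract an entire limit profile, show it is topological, and then upgrade local convergence to uniform convergence on the whole expanding ball; throughout I use the reduction $i_0=2$, $\nu=m_{j_0,2}$, so that near the vortex $\hat{u}_\e(x)\approx -2\nu\ln|x|\to+\infty$ as $x\to 0$. First I would establish that $\{\hat{u}_\e\}$ is locally bounded on $\RN\setminus\{0\}$. The gradient estimate (\ref{gradgrad}), which relies only on the Green representation (\ref{greenrp}) and Lemma \ref{bddofintegration} and hence holds for topological solutions as well, gives $|\nabla(\hat{u}_\e+2\nu\ln|x|)|\le c$ uniformly on $B_{\frac{r_0}{\e}}(0)$, so $\hat{u}_\e+2\nu\ln|x|$ has uniformly bounded oscillation on every fixed annulus. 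Combined with the maximum principle (using $f_\tau(t)<0$ for $t>0$ and $\hat{u}_\e=o(1)$ on $\partial B_{\frac{r_0}{\e}}(0)$ from Lemma \ref{speedofconvergence}(i)), the bound $\int_{B_{\frac{r_0}{\e}}(0)}|f_\tau(\hat{u}_\e)|\le M_0$, and the Harnack inequality (Lemma \ref{harnarkineq}), this yields uniform $C^0$ bounds on every fixed annulus, in the spirit of \cite{T1}. Since $\sup_t|f_\tau(t)|<\infty$, elliptic estimates then give $C^2_{\textrm{loc}}(\RN\setminus\{0\})$ precompactness, and along a subsequence $\hat{u}_\e\to u$, where the logarithmic behaviour at the origin forces $\Delta u+f_\tau(u)=-4\pi\nu\delta_0$ in $\RN$, while Fatou's lemma and (\ref{gradgrad}) give the $L^1$ and gradient conditions in (\ref{u}).

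Next I would show that $u$ is topological. As in (\ref{eitheror}), the facts $f_\tau(u)\in L^1(\RN)$ and $\sup_{|x|\ge 1}|\nabla u|<\infty$ force the dichotomy $\lim_{|x|\to\infty}u(x)\in\{0,+\infty\}$ (recall $u\ge 0$ by the maximum principle, since a negative interior minimum would violate $f_\tau(t)>0$ for $t<0$). To discard the nontopological alternative I use Lemma \ref{speedofconvergence}(ii): after rescaling it reads $\int_{B_{\frac{r}{\e}}(0)}\frac{(1-e^{\hat{u}_\e})^2}{(\tau+e^{\hat{u}_\e})^2}dx\to 4(\tau+1)\pi\nu^2$, so by Fatou $\int_{\RN}\frac{(1-e^u)^2}{(\tau+e^u)^2}dx\le 4(\tau+1)\pi\nu^2<\infty$. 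If $u\to+\infty$ at infinity, then the integrand tends to $1$ and is not integrable on $\RN$, a contradiction; hence $u\to 0$, so $u$ is a topological solution and in particular $\frac{(1-e^u)^2}{(\tau+e^u)^2}\in L^1(\RN)$.

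Finally, for the uniform convergence I would split $B_{\frac{r_0}{\e}}(0)$ into $\{|x|\le\delta\}$, $\{\delta\le|x|\le R\}$, and $\{R\le|x|\le\frac{r_0}{\e}\}$. On the middle annulus $\sup|\hat{u}_\e-u|\to 0$ is exactly the $C^2_{\textrm{loc}}$ convergence; on the inner ball $\hat{u}_\e+2\nu\ln|x|$ and $u+2\nu\ln|x|$ are nearly harmonic, because $\hat{u}_\e\to+\infty$ forces $f_\tau(\hat{u}_\e)\to 0$ there, so their difference is controlled by the values on $\{|x|=\delta\}$. The main obstacle is the outer region: here both $u$ and $\hat{u}_\e$ are small and nonnegative, and writing $w_\e=\hat{u}_\e-u$ one has $\Delta w_\e=-f'_\tau(\xi_\e)w_\e$ with $-f'_\tau(\xi_\e)=\frac{1}{(\tau+1)^3}(1+o(1))>0$. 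I would exploit the coercivity of $\Delta-\frac{1}{(\tau+1)^3}(1+o(1))$ together with exponentially decaying barriers to propagate the smallness of $w_\e$ from the two boundaries $\{|x|=R\}$ (middle convergence) and $\{|x|=\frac{r_0}{\e}\}$ (where $u_\e\to 0$ topologically and $u\to 0$ at infinity) into the whole outer region. The delicate point, and the place where the topological decay established above is essential, is to control the sign of $f'_\tau(\xi_\e)$ uniformly across the transition, i.e. to guarantee that $\hat{u}_\e$ never dips into the range where $f'_\tau>0$, so that the coercive maximum principle can be applied on the entire expanding shell.
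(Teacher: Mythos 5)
Your first two stages (rescaling, the gradient bound from the Green representation, extraction of a limit $u$ solving the singular equation, and the identification of $u$ as topological by combining the Fatou bound $\intr\frac{(1-e^u)^2}{(\tau+e^u)^2}dx\le 4(\tau+1)\pi\nu^2$ with the non\-integrability of the constant $1$) are sound and essentially coincide with the paper's argument, which works with $\hat v_\e=\hat u_\e+2\nu\ln|y|$ and rules out $\hat v_\e\to+\infty$ by the same mass-versus-area comparison. The genuine gap is in the final step. Your outer-region argument is circular: to run the coercive maximum principle for $\Delta-(-f_\tau'(\xi_\e))$ on $\{R\le|x|\le r_0/\e\}$ you must already know that $\xi_\e$, hence $\hat u_\e$, stays in the interval around $0$ where $f_\tau'<0$ throughout that expanding shell --- but a priori $\hat u_\e$ could develop a secondary bubble at an intermediate scale (say $|x|\sim\e^{-1/2}$), which is exactly the scenario that uniform convergence must exclude. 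You name this as ``the delicate point'' but supply no mechanism to resolve it, and boundary barriers from $\{|x|=R\}$ and $\{|x|=r_0/\e\}$ cannot see such an interior spike.

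The paper closes this gap by a different device, and it is worth noting where your Fatou inequality falls short of it. Applying the Pohozaev identity of Lemma \ref{speedofconvergence} directly to the limit $u$ gives the \emph{equality} $\intr\frac{(1-e^{u})^2}{(\tau+e^{u})^2}dx=4(\tau+1)\pi\nu^2$ (equation (\ref{intr})), i.e.\ no mass of $\frac{(1-e^{\hat u_\e})^2}{(\tau+e^{\hat u_\e})^2}$ is lost at intermediate scales. Expanding the square in $\int_{B_{r_0/\e}(0)}\frac{(e^{\hat u_\e}-e^{u})^2}{(\tau+e^{\hat u_\e})^2}dx$ and using Lemma \ref{speedofconvergence}(ii), (\ref{intr}) and dominated convergence, the three terms combine to $4(\tau+1)\pi\nu^2+4(\tau+1)\pi\nu^2-8(\tau+1)\pi\nu^2=0$, which is (\ref{est3}). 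This global $L^2$-type smallness, together with the uniform gradient bound (\ref{gradgrad2}) and the lower bound $\hat u_\e\ge -c$ of (\ref{zerolesslarger}), is what rules out any deviation of fixed size anywhere in $B_{r_0/\e}(0)$ and upgrades local to uniform convergence. With only the inequality ``$\le$'' that Fatou provides, a positive deficit could hide a bubble at an intermediate scale, and your barrier argument would never detect it; so you need either to prove the equality (\ref{intr}) and argue as above, or to find some other global control, before the outer-region step can be completed.
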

\begin{proof}
We decompose \begin{equation}\label{decomposehatu}\hat{u}_\e(y)=-2\nu\ln|y|+\hat{v}_\e(y).\end{equation}
Then $\hat{v}_\e$ satisfies
\begin{equation}\label{hatve}
\Delta \hat{v}_\e+ \frac{|y|^{-2\nu} e^{\hat{v}_\e}(1-|y|^{-2\nu}e^{\hat{v}_\e})}{(\tau+|y|^{-2\nu}e^{\hat{v}_\e})^3}=0
\ \textrm{in}\ B_{\frac{r_0}{\e}}(0).
\end{equation}
By using Lemma \ref{speedofconvergence}, $\lim_{x\to p_{j,2}}u_\e(x)=+\infty$  and   the  maximum principle,
we conclude that  there exists $c>0$ such that for small $\e>0$,
 \begin{equation}\label{zerolesslarger}
\inf_{B_r(p_{j,2})}u_\e\ge-c.
\end{equation}
In view of (\ref{decomposehatu}) and (\ref{zerolesslarger}), we  have
$$\hat{v}_\e\Big|_{\partial B_{R}(0)}\ge -c+2\nu\ln R\  \ \textrm{for any}\ R>0.$$
 By using the Green's representation formula for a solution $u_\e$ of (\ref{maineq})
(see (\ref{greenrp}) and  (\ref{gradgrad})), we see that there exists $c_0>0$ such that
\begin{equation}\label{gradgrad2}
|\nabla \hat{v}_\e(x)|\le c_0 \ \ \textrm{on}\ \ B_{\frac{r_0}{\e}}(0).
\end{equation}

We claim that $\hat{v_\e}$ is uniformly bounded in the $C^{2,\alpha}$ topology. To prove our claim,
we argue by contradiction and suppose that there exists $R_0>0$ such that
$\lim_{\e\to0}\Big(\sup_{B_{R_0}(0)}\hat{v}_\e\Big)=+\infty$. Then (\ref{gradgrad2})
implies that $\lim_{\e\to0}\Big(\inf_{B_{R}(0)}\hat{v}_\e\Big)=+\infty$ for any  $R\ge R_0$.
Clearly Lemma \ref{speedofconvergence} shows that, for any $R\ge R_0$,
\begin{equation}\label{large}4(\tau+1)\pi \nu^2\ge\lim_{\e\to0}\int_{B_R(0)}\frac{(1-|x|^{-2\nu}e^{\hat{v}_\e})^2}{(\tau+|x|^{-2\nu}e^{\hat{v}_\e})^2}dx=\pi R^2.\end{equation}
Since the right hand side of (\ref{large}) could be arbitrarily large, we obtain a contradiction which
 proves our claim.

Then we obtain a subsequence $\hat{v}_\e$ (still denoted in the same way) such that
\begin{equation}\label{convergencevep}
\hat{v}_\e\to v\ \textrm{uniformly in}\ C^2_{\textrm{loc}}(\RN).
\end{equation}
Let us define $u(y)\equiv-2\nu\ln|y|+v(y)$. In view of  (\ref{gradgrad2}),
Lemma \ref{bddofintegration} and  Lemma \ref{speedofconvergence}, we see that $u$ satisfies (\ref{u}).
Since $\sup_{\RN\setminus B_1(0)}|\nabla u|<+\infty$ and
$\frac{(1-e^{u})^2}{(\tau+e^{u})^2}\in L^1(\RN)$, we  see that $u$ is a topological solution in $\RN$.
Moreover, by using a Pohozaev type identity (see Lemma \ref{speedofconvergence}),
we  have
\begin{equation}\label{intr}\intr \frac{(1-e^{u})^2}{(\tau+e^{u})^2}dx=4(\tau+1)\pi \nu^2.\end{equation}

Now we claim that a stronger convergence property holds, namely $$\lim_{\e\to0}(\sup_{B_{\frac{r_0}{\e}}(0)}|\hat{u}_\e-u|)=0.$$
In view of (\ref{convergencevep}), we have
\begin{equation}\label{est1}
\lim_{\e\to0}\Big(\sup_{B_1(0)}|\hat{u}_\e-u|\Big)=0.
\end{equation}

We also see that
\begin{equation*}
\begin{aligned}
&\int_{B_{\frac{r_0}{\e}}(0)}\frac{(e^{\hat{u}_\e}-e^u)^2}{(\tau+e^{\hat{u}_\e})^2}dx
=\int_{B_{\frac{r_0}{\e}}(0)}\frac{(e^{\hat{u}_\e}-1)^2}{(\tau+e^{\hat{u}_\e})^2}
+\frac{(e^u-1)^2}{(\tau+e^{\hat{u}_\e})^2}
-\frac{2(1-e^{\hat{u}_\e})(1-e^u)}{(\tau+e^{\hat{u}_\e})^2}dx.
\end{aligned}
\end{equation*}
At this point Lemma \ref{speedofconvergence}, (\ref{intr}), and the dominated convergence theorem
together imply that
\begin{equation}\label{est3}
\lim_{\e\to0}\int_{B_{\frac{r_0}{\e}}(0)}\frac{(e^{\hat{u}_\e}-e^u)^2}{(\tau+e^{\hat{u}_\e})^2}dx
=0.
\end{equation}
By using (\ref{u}), (\ref{zerolesslarger}), (\ref{gradgrad2}), (\ref{est1}),  and (\ref{est3}),
we obtain the desired conclusion.
\end{proof}
At this point, we are ready to prove  Theorem \ref{stable}.

{\bf  Proof for Theorem \ref{stable}: topological solution $\Rightarrow$ strictly stable solution}

In view of the strong convergence property as stated in Lemma \ref{linfty},
we can deduce  information about the limiting problem of the linearized equation of (\ref{maineq}) at $u_\e$.
With this purpose,
we define
\begin{equation}
\label{defpsie}\hat{\psi}_\e(y)\equiv\e\phi_\e(\e y)\ \ \textrm{on}\ \ B_{\frac{r_0}{\e}}(0).
\end{equation} Then we have
\begin{equation}\begin{aligned} \label{psieeq}
\left \{
\begin{array}{ll}
-\Delta\hat{\psi}_\e-f'_\tau(\hat{u}_\e)\hat{\psi}_\e=\e^2\mu_\e\hat{\psi}_\e\ \textrm{on}\ B_{\frac{r_0}{\e}}(0),
\\ \hat{\psi}_\e>0\ \textrm{in}\ B_{\frac{r_0}{\e}}(0),
\end{array}\right.
\end{aligned}\end{equation}
and $\|\nabla \hat{\psi}_\e\|_{L^2(B_{\frac{r_0}{\e}}(0))}+\|\hat{\psi}_\e\|_{L^2(B_{\frac{r_0}{\e}}(0))}\le C$ for some constant $C>0$.
 By using standard elliptic estimates, we see that $\hat{\psi}_\e$ is uniformly bounded in the $C^{2,\alpha}_{\textrm{loc}}$ topology.
Hence, by passing to a subsequence (still denoted in the same way), we see that there exists $\hat{\psi}\ge0$ such that
$$\hat{\psi}_\e\to\hat{\psi}\ \ \textrm{in}\  C^2_{\textrm{loc}}(\RN),$$
and
\begin{equation}\begin{aligned} \label{psieq}
\left \{
\begin{array}{ll}
-\Delta\hat{\psi}-f'_\tau(u)\hat{\psi}=\mu_0\hat{\psi}\ \textrm{in}\ \RN,
\\ \hat{\psi}\in W^{1,2}(\RN),\  \ \hat{\psi}\ge0.
\end{array}\right.
\end{aligned}\end{equation}

Since  $u$ has exponential decay at infinity, then $f'_\tau(u)+\frac{1}{(\tau+1)^3}$ has exponentially decay at infinity. Hence by Lemma  \ref{linfty}, we see that
 \begin{equation*}\begin{aligned}
&\lim_{\e\to0}\int_{B_{\frac{r_0}{2\e}}(0)}\Big(f'_\tau(\hat{u}_\e)+\frac{1}{(\tau+1)^3}\Big)\hat{\psi}_\e^2dx
=\int_{B_R(0)}\Big(f'_\tau(u)+\frac{1}{(\tau+1)^3}\Big)\hat{\psi}^2 dx+O(e^{-\delta_0R})
\end{aligned}\end{equation*}
  for some $\delta_0>0$. Hence by using (\ref{defpsie}), (\ref{psieeq}),  and Lemma \ref{forblowup}, we can prove that for large $R>0$,
 \begin{equation*}\begin{aligned}
&\int_{B_R(0)}\Big(f'_\tau(u)+\frac{1}{(\tau+1)^3}\Big)\hat{\psi}^2dx
\\&\ge\lim_{\e\to0}\Big(-\e^2\mu_\e+\frac{1}{(\tau+1)^3}\Big)\int_{B_{\frac{r_0}{2\e}}(0)}\hat{\psi}_\e^2dx+O(e^{-\delta_0R})
\\&\ge \Big(|\mu_0|+\frac{1}{(\tau+1)^3}\Big)a_{\frac{r_0}{2}}+O(e^{-\delta_0R})>0,
\end{aligned}\end{equation*}
which implies $\hat{\psi}\neq0\in W^{1,2}(\RN)$ (see Lemma 4.15 in \cite{T1} for further details).
On the other side, by arguing as in Proposition 4.16 in \cite{T1}, we see that the problem (\ref{psieq}) admits only the trivial solution and we obtain a contradiction.
This observation concludes
the proof of Theorem \ref{stable}: topological solution $\Rightarrow$ strictly stable solution. $\square$

\section{Uniqueness of stable solution}
In this section, we deduce Theorem \ref{uniqueness} from Theorem \ref{stable}.

{\bf Proof of Theorem \ref{uniqueness}} The existence of stable solution can be proved by
well known monotone iteration schemes and therefore we will skip it here.
Hence, to prove Theorem \ref{uniqueness}, it suffices to prove the uniqueness property.
We argue by contradiction and suppose that there exist two sequences of distinct stable solutions $u_{\e,1}$
and $u_{\e,2}$ of (\ref{maineq}). From Theorem \ref{stable},  up to the extraction of subsequences,
we have $u_{\e,i}\to 0$ uniformly in any compact subset of $\Omega\setminus Z$ as $\e\to0$ for $i=1, 2$.
Since $u_{\e,1}-u_{\e,2}$ is not identically zero, we can define
$\phi_\e\equiv\frac{u_{\e,1}-u_{\e,2}}{\|u_{\e,1}-u_{\e,2}\|_{L^2(\Omega)}}$  which satisfies
$$\Delta \phi_\e+\frac{1}{\e^2}f'_\tau(\eta_\e)\phi_\e=0\ \  \textrm{on}\ \ \Omega,$$
where $\eta_\e$ is some real number between $u_{\e,1}$ and $u_{\e,2}$.
By using  the proof of Lemma \ref{forblowup}, we see that
there exist $p_{j_0,i_0}\in Z$ and $r_0>0$ such that for any $r\in(0,r_0)$, there exists a constant $a_r>0$ such that
$$\lim_{\e\to0}\int_{B_r(p_{j_0,i_0})}\phi_\e^2dx\ge a_r.$$
Since $f_\tau(u)=-f_{\tau^{-1}}(-u)/\tau^3$, we can assume without loss of generality that $i_0=2$,
$p_{j_0,i_0}=0$, and $\nu\equiv m_{j_0,i_0}$.
We consider the scaled function $$\hat{u}_{\e,i}(y)=u_{\e,i}(\e y)\ \textrm{in}\ B_{\frac{r_0}{\e}}(0)\equiv\Big\{y\in\RN\ \Big|\ |y|<\frac{r_0}{\e}\Big\}.$$
In view of Lemma \ref{linfty},
we obtain
$$\hat{u}_{\e,i}\to u_i\ \textrm{uniformly in}\ C^2_{\textrm{loc}}(\RN) \ \ \textrm{for} \ \ i=1,2,$$
 where $u_i$ is a topological solution of
$$\Delta u_i+ \frac{e^{u_i}(1-e^{u_i})}{(\tau+e^{u_i})^3}=-4\pi\nu\delta_0
\ \textrm{in}\ \RN.$$ Moreover, we can apply the method of moving planes (see \cite{GNN, H}) to conclude that $u_i$ is radially symmetric about the origin.
Since radially symmetric and topological solutions are unique (see \cite{CHLL}), we conclude that
$u_1=u_2$ in $\RN$. Let us set $u\equiv u_1$.
We can find $\hat{\psi}$ such that
$$\e\phi_\e(\e y)\to\hat{\psi}(y)\ \ \textrm{in}\  C^2_{\textrm{loc}},$$
and
\begin{equation*}\begin{aligned}
\left \{
\begin{array}{ll}
-\Delta\hat{\psi}-f'_\tau(u)\hat{\psi}=0\ \textrm{in}\ \RN,
\\ \hat{\psi}\in W^{1,2}(\RN).
\end{array}\right.
\end{aligned}\end{equation*}
By arguing as in the proof of Theorem \ref{stable} (see Section 5), we see that $\hat{\psi}\neq0$.
Then, \begin{equation}\label{infweight}
\mu^*\equiv\inf_{\psi\in W^{1,2}(\RN)\setminus\{0\}}\frac{\intr |\nabla \psi|^2-f'_\tau(u)\psi^2dx}
{\intr (1-e^u)\psi^2dx}\le0.
\end{equation}
Then Lemma \ref{linfty} shows that the infimum of (\ref{infweight}) is attained at some
$\psi_0\in W^{1,2}(\RN)\setminus\{0\}$ satisfying
\begin{equation*}
-\Delta\psi_0-f'_\tau(u)\psi_0=\mu^*(1-e^u)\psi_0,\  \psi_0>0\ \ \textrm{in}\ \RN.
\end{equation*}
At this point Theorem 3.4 in \cite{CHLL} shows that  $\mu^*<0$. However,
 by arguing as in Proposition 4.16 in \cite{T1}, we can show that $\psi_0\equiv0$ which is the desired contradiction.
Therefore there exists a unique stable solution of (\ref{maineq}) for sufficiently small  $\e>0$.
$\square$

\section{Appendix}
In this section,  we discuss nontopological solutions of the following equation:
\begin{equation}\begin{aligned}\label{limitnontopological}
\left \{
\begin{array}{ll}
\Delta u +f_\tau(u)=4\pi\nu\delta_0\ \  \textrm{in}\ \ \RN,
\\ f_\tau(u)\in L^1(\RN),
\\ \lim_{|x|\to\infty}u(x)=-\infty.
\end{array}\right.
\end{aligned}\end{equation}
As we mentioned in Section 4, we need to analyze a solution $u_\e$ of (\ref{maineq}), such that $u_\e-2\ln\e$ has a
bubble at some point in $\Omega\setminus Z_2$ and
$u_\e$ (after a suitable scaling) tends to a nontopological solution $u$ of (\ref{limitnontopological}).
It is not difficult to check that it is enough to our purposes to consider the case $\nu\ge0$.
Concerning this problem, we have the following proposition.
\begin{prop} Let $u$ be a solution of (\ref{limitnontopological}) and $\nu\ge0$. Then $u$ is unstable.
\end{prop}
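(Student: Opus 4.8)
The plan is to prove instability by producing a single competitor $\phi$ for which the quadratic form
\begin{equation*}
Q(\phi)\equiv\intr\big(|\nabla\phi|^2-f_\tau'(u)\phi^2\big)\,dx
\end{equation*}
is negative; the competitor will be the dilation field $\phi=x\cdot\nabla u$. The structural fact that makes this work is that $u\le 0$ on $\RN$. Indeed, since $\lim_{|x|\to\infty}u=-\infty$ (and $u\sim 2\nu\ln|x|\to-\infty$ near the vortex when $\nu>0$), the supremum of $u$ is attained at an interior point $x_0$, necessarily with $x_0\ne 0$ when $\nu>0$; there $\nabla u(x_0)=0$ and $\Delta u(x_0)\le 0$, and since $u$ solves $\Delta u=-f_\tau(u)$ near $x_0$ we obtain $f_\tau(u(x_0))\ge 0$, i.e. $e^{u(x_0)}\le 1$, whence $\sup_{\RN}u\le 0$. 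Consequently $e^u\le 1$ everywhere, and the numerator $e^u\big(2\tau-(\tau-1)e^u\big)$ of $F_{2,\tau}(u)$ is then strictly positive for every $\tau>0$; thus $F_{2,\tau}(u)>0$ pointwise and, as $u<0$ on a set of infinite measure, $\intr F_{2,\tau}(u)\,dx>0$. (This is precisely the positivity that is lost for the sign-reversed, type II, problem, where stable nontopological solutions may occur.)

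Granting this, I would compute $Q(x\cdot\nabla u)$ by means of the pointwise identity
\begin{equation*}
-\Delta(x\cdot\nabla u)-f_\tau'(u)\,(x\cdot\nabla u)=2f_\tau(u)\quad\textrm{on}\ \RN,
\end{equation*}
which holds \emph{without} any residual Dirac mass: the two distributional contributions coming from $x\cdot\nabla(\Delta u)$ and from $2\Delta u$ cancel exactly at the origin. Pairing this identity with $x\cdot\nabla u$ and integrating by parts, and using $f_\tau(u)\,(x\cdot\nabla u)=x\cdot\nabla\big(F_{2,\tau}(u)\big)$ together with $\nabla\cdot x=2$, gives
\begin{equation*}
Q(x\cdot\nabla u)=2\intr f_\tau(u)\,(x\cdot\nabla u)\,dx=-4\intr F_{2,\tau}(u)\,dx<0,
\end{equation*}
the boundary integrals $\oint_{\partial B_R}(x\cdot\nabla u)\,\partial_n(x\cdot\nabla u)\,dS$ and $\oint_{\partial B_R}F_{2,\tau}(u)\,(x\cdot n)\,dS$ being discarded in the limit $R\to\infty$. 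After cutting off $x\cdot\nabla u$ to a compactly supported function, this produces an admissible test function of negative energy, i.e. $u$ is unstable.

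The part that requires care—and which I expect to be the main obstacle—is the justification that $x\cdot\nabla u$ is an admissible competitor and that all the boundary terms above really vanish. To this end I would first derive the asymptotics $u(x)=-\beta_\infty\ln|x|+O(1)$ as $|x|\to\infty$, with $\beta_\infty=\beta-2\nu$ (the relation following by integrating the equation) and $\beta_\infty>2$ (forced by $f_\tau(u)\in L^1(\RN)$), along with the matching gradient bound as in (\ref{gradgrad}); by the moving plane method, exactly as in Lemma \ref{radiallysymm}, $u$ is radially symmetric about $0$, which renders these expansions uniform. From $\beta_\infty>2$ one gets $F_{2,\tau}(u)=O(|x|^{-\beta_\infty})$ at infinity, so $R^2F_{2,\tau}(u)\to 0$ and the boundary terms vanish, while at the origin the singular parts of $\nabla u\sim 2\nu x/|x|^2$ cancel in $\nabla(x\cdot\nabla u)$ (radially, $\partial_r(r u')=-r f_\tau(u)\to 0$), so that $|\nabla(x\cdot\nabla u)|$ stays bounded and $\intr|\nabla(x\cdot\nabla u)|^2\,dx$ converges near $0$. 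With these estimates in hand the computation of $Q(x\cdot\nabla u)$ is rigorous, completing the proof.
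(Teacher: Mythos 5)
Your route is genuinely different from the paper's: instead of splitting into the radially symmetric case (where the paper simply cites Theorem 3.4 of \cite{CHLL}) and the non-radial case (where the paper tests the quadratic form against the sign-changing kernel element $\partial_\theta u$), you test against the dilation field $x\cdot\nabla u$ and reduce instability to the positivity of $\intr F_{2,\tau}(u)\,dx$, which you correctly derive from $u\le 0$. The algebra checks out: $F_{2,\tau}'=f_\tau$, $F_{2,\tau}(t)>0$ for $t\le 0$ and every $\tau>0$, the Dirac masses in $x\cdot\nabla(\Delta u)+2\Delta u$ do cancel, and formally $Q(x\cdot\nabla u)=-4\intr F_{2,\tau}(u)\,dx<0$. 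If completed, this would be a unified and essentially self-contained argument, which is attractive. However, there are two genuine gaps.

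First, the assertion that $u$ is radially symmetric about the origin ``by the moving plane method, exactly as in Lemma~\ref{radiallysymm}'' is unjustified and false in general: Lemma~\ref{radiallysymm} concerns the regular equation, and for equations carrying a source $4\pi\nu\delta_0$ non-radial finite-mass solutions can exist (for the singular Liouville limit this happens precisely at integer $\nu$, and the paper's own proof is organized around the possibility that $u$ is \emph{not} radial). Since you invoke radial symmetry to make the expansions uniform --- in particular to control $\partial_n(x\cdot\nabla u)$ on large circles and to get $\nabla(x\cdot\nabla u)\in L^2$ --- this gap propagates into the justification of every boundary term; the repair is to use the refined asymptotics of \cite{ChL}, namely $u=(-\beta+2\nu)\ln|x|+C+O(|x|^{-\gamma})$ together with matching derivative estimates up to second order obtained by rescaled elliptic estimates on dyadic annuli, rather than symmetry. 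Second, $x\cdot\nabla u\to-\beta+2\nu\ne 0$ at infinity, so it does not belong to $L^2(\RN)$, and a cutoff $\chi_R$ supported at scale $R$ contributes $\intr(x\cdot\nabla u)^2|\nabla\chi_R|^2\,dx=O(1)$, which does \emph{not} vanish; ``discarding the boundary integrals'' does not dispose of this term. You need the logarithmic cutoff (transition between $R$ and $R^2$ with $|\nabla\chi_R|\le C/(|x|\ln R)$) to make this error $O(1/\ln R)$, which is then beaten by the fixed negative quantity $-4\intr F_{2,\tau}(u)\,dx$. With these two repairs the argument goes through, but as written both steps are asserted rather than proved.
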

\begin{proof}By using the maximum principle, we always have $u<0$.
Moreover, if $u$ is radially symmetric, then Theorem 3.4 in \cite{CHLL} shows that $u$ is unstable.
In particular, if $\nu=0$,
then Lemma \ref{radiallysymm} shows that $u$ is  a radially symmetric function.
Thus, we only need to prove the instability of $u$ in the case where $\nu>0$ and $u$ is not radially symmetric.
Let us set
$$\frac{\partial}{\partial \theta}=x_2\frac{\partial}{\partial x_1}-x_1\frac{\partial}{\partial x_2}.$$
Then we see that $$\Delta(\partial_\theta u) +f'_\tau(u)(\partial_\theta u)=0\ \  \textrm{in}\ \ \RN.$$
Let $\beta=\frac{1}{2\pi}\intr f_\tau(u)dx$. Since $u<0$, we see that $e^u\in L^1(\RN)$ and $\lim_{|x|\to\infty}\frac{u(x)}{\ln |x|}=-\beta+2\nu<-2$. Moreover, by using the results in \cite{ChL}, we obtain the sharper estimate
$u(x)=(-\beta+2\nu)\ln |x|+C+O(|x|^{-\gamma})$, $u_\theta(x)=O(|x|^{-1})$ as   $|x|\to+\infty$  where $C$ is a constant and $\gamma$ is a positive constant.
We also note that there exist a  local maximum point and  a local minimum point of $u$ on each sphere of radius $r$ since $u$ is not radially symmetric.
Thus $\partial_\theta u$ changes signs, which implies at least that the first eigenvalue of the linearized equation of (\ref{maineq}) at $u$ is negative. Therefore we see that
$u$ is unstable which was the desired conclusion.
\end{proof}

 \end{document}